\documentclass[11pt]{article}
\usepackage[utf8]{inputenc}
\usepackage[T1]{fontenc}
\usepackage[margin=1in]{geometry}
\usepackage{graphicx}
\usepackage{multirow}
\usepackage{amsmath,amssymb,amsfonts}
\usepackage{amsthm}
\usepackage{mathrsfs}
\usepackage[title]{appendix}
\usepackage{xcolor}
\usepackage{textcomp}
\usepackage{manyfoot}
\usepackage{booktabs}
\usepackage{algorithm}
\usepackage{algorithmicx}
\usepackage{algpseudocode}
\usepackage{listings}
\usepackage{cases}
\usepackage{tikz}
\usepackage{pgfplots}
\pgfplotsset{compat=1.18}
\usepackage{float}
\usepackage{natbib}
\usepackage{url}
\usepackage{microtype}
\usepackage{hyperref}

\newcommand{\h}[1]{\mathbf{#1}}
\newcommand{\argmin}{\operatornamewithlimits{argmin}}
\newcommand{\x}{\mathbf{x}}
\newcommand{\y}{\mathbf{y}}
\newcommand{\z}{\mathbf{z}}

\newcommand{\lipf}{\mathrm{Lip}_{\nabla f}}

\newtheorem{theorem}{Theorem}
\newtheorem{proposition}[theorem]{Proposition}
\newtheorem{lemma}{Lemma}

\newtheorem{remark}{Remark}
\newtheorem{definition}{Definition}

\title{ADMM and Linearized ADMM for Weakly Convex Minimization}
\author{%
Shenghan Mei$^{1}$, Chengyu Ke$^{2}$, Yifei Lou$^{1}$, and Miju Ahn$^{2,*}$\\[0.5em]
\small $^{1}$Department of Mathematics, University of North Carolina at Chapel Hill, Chapel Hill, NC, USA\\
\small $^{2}$Department of Operations Research and Engineering Management, Southern Methodist University, Dallas, TX, USA\\
\small $^{*}$Corresponding author: \texttt{mijua@smu.edu}
}
\date{}

\begin{document}
\maketitle

\begin{abstract}
We study a class of weakly convex optimization problems in which the objective is the sum of a smooth convex term and a weakly convex term that may be nonsmooth. To exploit this structure, we develop a splitting technique based on the alternating direction method of multipliers (ADMM), which decouples the minimization of the two components into tractable subproblems. Because the update associated with the smooth term may require an inner iterative solver, we further linearize this term, yielding a linearized ADMM (LADMM) scheme with an inexpensive one‑step update.
Under mild conditions, we establish the subsequence convergence of both ADMM and LADMM methods to directional stationary solutions, which are equivalent to critical points and Clarke stationary solutions for our weakly convex problem.
Numerical experiments on two low-dimensional test functions and a high-dimensional logarithmic regularized logistic regression model demonstrate that the proposed approaches are computationally efficient and produce solutions of comparable quality to baseline methods.
\end{abstract}

\noindent\textbf{Keywords:} Weakly convex optimization; ADMM; linearized ADMM; convergence analysis; directional stationary point.

\section{Introduction}

Optimization is a fundamental tool in applied mathematics and scientific computing, with broad applications in machine learning, signal processing, statistics, and operations research. Within the diverse landscape of optimization methods, convex optimization has become particularly prominent and influential. It rests on a mature theoretical foundation with strong guarantees (e.g., any local minimizer is globally optimal) and supports the design of efficient, reliable algorithms that scale to high‑dimensional problems \citep{boyd2004convex,nesterov2004introductory,beck2017first}. 
Consequently, convex optimization has become both a standard modeling approach and a practical computational technique across a wide range of applications.

In practice, however, convexity can be overly restrictive: many modern modeling choices involve objective functions that are nonconvex,   yet still possess structure that algorithms can exploit. Consequently, a variety of structured nonconvex optimization approaches along with convergence analysis have also been developed \citep{gao2023convergence,davis2019stochastic,sun2018alternating}. 
An important class that lies between convex and fully nonconvex functions is that of weakly convex (WC) functions, namely functions that become convex after adding a quadratic term \citep{rockafellar1998variational}. 
On the theoretical side, weak convexity admits a well-developed first-order variational framework, e.g., directional derivatives and generalized subdifferentials, which gives rise to rigorous stationarity concepts and supports convergence analysis for optimization algorithms. 
At the same time, the WC assumption is prevalent in large-scale machine learning, for instance in the analysis of stochastic and proximal-type methods for nonsmooth nonconvex objectives \citep{davis2019stochastic}, as well as in 
min-max formulations 
motivated by modern machine-learning applications \citep{liu2021firstorder}.

In this paper, we study a broad class of composite optimization problems whose objective can be written as the sum of two components: a smooth convex function and a (potentially nonsmooth) WC function.  The smooth convex term typically models data fidelity or physics‑based consistency, while the WC term is used to encode additional modeling structure, such as sparsity, robustness, or other forms of regularization. This composite formulation encompasses many widely used models in statistical learning, inverse problems, and signal/image processing \citep{liu2009largescale,shi2012projection}.
For example, nonlinear regression and classification losses combined with weakly convex penalties fall naturally into this framework. Popular WC regularizers include smoothly clipped absolute deviation (SCAD) \citep{fan2001variable}, the minimax concave penalty (MCP) \citep{zhang2010nearly}, logarithmic (LOG) regularization \citep{ke2021iteratively,ke2024generalized}, and the transformed $\ell_1$ penalty \citep{nikolova2000local,lv2009unified,zhang2017minimization}. %

We employ a variable-splitting method, namely the alternating direction method of multipliers (ADMM) \citep{gabay1976dual,boyd2011distributed},  taking advantage of the two-term structure: the smooth term supplies gradient information, while the WC term preserves useful variational properties. Although the original formulation is unconstrained, introducing an auxiliary variable allows the convex term and the weakly convex term to be handled separately. Majorization-minimization (MM) \citep{hunter2004tutorial}, difference-of-convex algorithm (DCA) \citep{PhamDinhLeThi1997}, and related first-order methods such as proximal gradient are also natural alternatives for unconstrained composite problems; however, these methods typically take a single forward step on the smooth term with a size capped by its Lipschitz constant, which can be conservative when this constant is large or difficult to estimate tightly. ADMM instead solves the smooth-term subproblem to high accuracy at each iteration, which can yield faster practical progress in such settings. 

Despite its flexibility, a practical bottleneck in applying ADMM to our problem setting is that the subproblem associated with the smooth term may not admit a closed-form solution. Consequently, it is often necessary to run an inner iterative solver at each outer iteration, which can dominate the overall computational cost. To mitigate this issue, we further adopt a linearized ADMM (LADMM) \citep{wang2012linearized} scheme, in which the smooth term in the corresponding subproblem is replaced by its first-order approximation.
This modification preserves the splitting structure while yielding an inexpensive closed-form update, thereby significantly reducing the per-iteration cost.

Prior convergence analyses for ADMM were largely limited to convex problem settings, but there has been growing interest in extending such guarantees to nonconvex problems. For example, \citet{hong2016convergence} established convergence of the classical ADMM for certain classes of linearly constrained nonconvex problems with possibly nonsmooth terms, and \citet{li2015global} proved global convergence of splitting methods (including ADMM) for nonconvex composite problems under a semi-algebraic assumption. Analogous guarantees have since been developed for the linearized variant (LADMM): \citet{liu2019linearized} analyzed a linearly constrained nonconvex nonsmooth setting and proved that both the constraint violation and first-order optimality residuals vanish; \citet{yashtini2022convergence} proved global convergence of a proximal linearized ADMM under the Kurdyka--\L ojasiewicz (KL) assumption; and, under the same KL assumption, \citet{sun2018alternating} proposed an ADMM-based method that convexifies the resulting nonconvex subproblems via a linearization technique inspired by the difference-of-convex algorithm (DCA), establishing convergence to a critical point. For problems with nonlinear constraints, \citet{elbourkhissi2025convergence} investigated an inexact, linearized ADMM in which both the smooth objective term and the nonlinear constraints are linearized within the augmented Lagrangian, with one of the resulting subproblems solved only approximately at each iteration.

There has been prior work on ADMM-type convergence analysis for strongly and weakly convex composite models with linear constraints \citep{hong2016convergence,zhang2019fundamental}. In contrast, we focus on an unconstrained formulation and establish convergence to a directional stationary (d-stationary) point (see Definition \ref{def:Dstat}). 
In general nonconvex optimization, d-stationary points can be a stronger notion of optimality than critical points because it requires the directional derivative to be nonnegative in
every direction. For example, it has been shown that d-stationarity implies critical points for certain DC programs, and serves as a necessary condition for local optimality \citep{cui2021modern,pang2017computing}. For the specific weakly convex composite structure studied here, these notions in fact coincide (Section~\ref{sec:wc}); we discuss the implications of this equivalence for our contribution there.

Although the unconstrained setting may appear simpler, it eliminates the consensus structure induced by linear constraints that is heavily exploited in prior analyses such as \citet{hong2016convergence}. As a result, our analysis requires a different sufficient-decrease argument that directly leverages the weak convexity modulus. For  ADMM, we establish a sufficient-decrease property of the augmented Lagrangian under explicit parameter conditions. For LADMM, we further introduce a modified augmented Lagrangian to control the lagged-gradient effect arising from linearization, which is critical for the convergence analysis. These developments lead to explicit parameter conditions under which the sequences generated by both ADMM and LADMM admit convergent subsequences whose limit points are d-stationary solutions.

We further corroborate the theory with three numerical case studies, including smooth low-dimensional test functions and a high-dimensional sparse logistic regression model \citep{liu2009largescale}. 
These experiments illustrate (i) the effectiveness of ADMM/LADMM on the two-term structure, (ii) the practical benefit of linearization in reducing per-iteration cost when inner solves are expensive, and (iii) the resulting trade-off between computational time and solution quality across methods, showing that ADMM/LADMM achieve comparable accuracy with significantly reduced runtime in our test problems.

We summarize the main contributions of this work as follows:
\begin{enumerate}
\item We apply ADMM to the \emph{unconstrained} WC composite problem. Since this eliminates the consensus structure exploited by existing linearly constrained nonconvex ADMM analyses (e.g., \citet{hong2016convergence}), we develop a sufficient-decrease argument that directly leverages the weak convexity modulus.

\item We develop a linearized variant (LADMM) that replaces the smooth-term subproblem with an inexpensive closed-form update. To control the resulting lagged-gradient effect, we introduce a Lyapunov correction sequence that avoids the additional structural assumptions, required by existing linearized/proximal ADMM analyses.

\item We establish subsequential convergence of both ADMM and LADMM to a d-stationary point of the original problem, under explicit, verifiable parameter conditions on $\rho$, avoiding the KL or semi-algebraic assumptions typically required for nonconvex ADMM-type convergence guarantees.

\item We conduct numerical experiments on three case studies, ranging from low-dimensional smooth test functions to a high-dimensional sparse logistic regression model, which confirm the practical efficiency of both methods.
\end{enumerate}

The remainder of the paper is organized as follows. In Section \ref{Formulation}, we introduce the weakly convex setting, presenting the ADMM and LADMM algorithms. Section \ref{Algorithms and Analysis} develops the optimality notions, characterizes weak convexity used in our analysis, and establishes convergence guarantees for both methods. Section \ref{sect:numerical} provides a detailed account of the numerical experiments to validate the performance of ADMM and LADMM. Finally, Section \ref{conclusion} summarizes the paper.

\section{Formulation and algorithms}\label{Formulation}
We consider the problem
\begin{align} \label{prob:fplusg}
    \min \limits_{\x \in \mathbb{R}^n} \ \zeta(\x) \triangleq f(\x) + g(\x),
\end{align}
where $f$ is convex and differentiable and $g$ is a weakly convex function that is not necessarily differentiable. The notion of weak convexity is defined in Definition \ref{def:weaklycvx}.

\begin{definition}[weakly convex function] \label{def:weaklycvx}
    A function $g(\x)$ is weakly convex if there exists a constant $\sigma_{g} > 0$ such that $g(\x) + \frac{\sigma_{g}}{2} \| \x\|_2^2$ is convex. The constant $\sigma_g$ is often referred to as the weakly convex modulus of the function $g$.
\end{definition}

To take advantage of the special structure of the two functions $f$ and $g$, we apply ADMM to solve \eqref{prob:fplusg}, as detailed in Section \ref{sect:WC-ADMM}, and further accelerate it using LADMM in Section \ref{sec:LADMM}.  The convergence analysis for both methods is given in Section \ref{Algorithms and Analysis}.

\subsection{ADMM} \label{sect:WC-ADMM}
We rewrite \eqref{prob:fplusg} into an equivalent constrained formulation,
\begin{align}\label{eq:WC-general}
    \min_{\h x, \h z} \quad  f(\h z) + g(\h x) \quad 
    \text{s.t.} \quad  \h x = \h z,
\end{align}
with its corresponding augmented Lagrangian function given by
\begin{align} \label{eq:aug-Lag}
\mathcal{L}(\h x, \h z; \h v) \triangleq f(\h z) + g(\h x) + \rho \langle \h v, \h x- \h z \rangle + \frac{\rho}{2}\|\h x- \h z\|_2^2, 
\end{align}
where $\h v \in \mathbb{R}^n$ is a dual variable and $\rho>0$ is a parameter.
The ADMM iterations proceed as follows:
\begin{equation} \label{admm}
\left\{\begin{array}{l}
\h x^{t+1} \in \argmin_{\h x} \mathcal{L}(\h x, \h z^t; \h v^{t}) \\
\h z^{t+1} \in \argmin_{\h z} \mathcal{L}(\h x^{t+1}, \h z; \h v^t) \\
\h v^{t+1} = \h v^t + \h x^{t+1} - \h z^{t+1},
\end{array}\right.
\end{equation}
where the superscript $t$ counts the iteration number.
By algebraic manipulation, the $\h x$-subproblem in \eqref{admm} can be expressed equivalently as 
\begin{align} \label{prob:gwidehat}
 \min_{\h x} \quad \widehat{g}(\x) \triangleq g(\h x) + \frac{\rho}{2}\|\h x- \h z^t   + \h v^t\|_2^2.  
\end{align}
Proposition \ref{prop:x_sub_convex} guarantees that \eqref{prob:gwidehat} is a convex problem if the parameter $\rho$ is greater than or equal to the weakly convexity modulus %
of $g$ (defined in Definition~\ref{def:weaklycvx}). 

\begin{algorithm}
\caption{ADMM for weakly convex minimization}
\label{alg:admm}
\begin{algorithmic}[1]
\State Parameters: $\rho \in \mathbb{R}^+$ and tMAX $\in \mathbb{Z}^+$; 
\State Initialize iterates $\h x^0, \h z^0, \h v^0$, and $t=0$; 
\While{$t<\text{tMAX}$} 
\State Compute $\h x^{t+1} \in \argmin_{\h x} g(\h x) + \rho \langle \h v^t, \h x \rangle + \frac{\rho}{2}\|\h x- \h z^t\|_2^2$;
\State Compute $\h z^{t+1} \in \argmin_{\h z} f(\h z) - \rho \langle \h v^t, \h z \rangle + \frac{\rho}{2}\|\h x^{t+1}- \h z\|_2^2$;
\State{$\h v^{t+1} = \h v^t + \h x^{t+1} -\h  z^{t+1}$;} \State{$t=t+1$;}
\EndWhile
\State \Return $\h x^*=\h x^t$
\end{algorithmic}
\end{algorithm}
\begin{proposition} \label{prop:x_sub_convex}
Suppose $g$ is weakly convex with modulus $\sigma_g > 0$. If $\rho \geq \sigma_{g}$, then the problem \eqref{prob:gwidehat} is convex.
\end{proposition}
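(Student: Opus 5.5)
The plan is to write $\widehat{g}$ as a convex function plus a convex quadratic. The natural decomposition is to split the quadratic penalty $\frac{\rho}{2}\|\x - \z^t + \h v^t\|_2^2$ into a part that convexifies $g$ and a remainder. Set $\h c \triangleq \z^t - \h v^t$ and expand:
\begin{align*}
\frac{\rho}{2}\|\x - \h c\|_2^2 = \frac{\sigma_g}{2}\|\x\|_2^2 + \frac{\rho-\sigma_g}{2}\|\x\|_2^2 - \rho\langle \h c, \x\rangle + \frac{\rho}{2}\|\h c\|_2^2 .
\end{align*}
Substituting this into \eqref{prob:gwidehat} gives
\begin{align*}
\widehat{g}(\x) = \Big( g(\x) + \frac{\sigma_g}{2}\|\x\|_2^2 \Big) + \frac{\rho-\sigma_g}{2}\|\x\|_2^2 - \rho\langle \h c, \x\rangle + \frac{\rho}{2}\|\h c\|_2^2 .
\end{align*}

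We then check the three pieces separately:
\begin{itemize}
\item The first bracket is convex by Definition~\ref{def:weaklycvx}, since $\sigma_g$ is the weak convexity modulus of $g$.
\item The second term is convex because the hypothesis $\rho \ge \sigma_g$ makes its coefficient nonnegative.
\item The remaining terms are affine in $\x$, hence convex.
\end{itemize}
Since a sum of convex functions is convex, $\widehat{g}$ is convex. The problem \eqref{prob:gwidehat} minimizes this convex objective over all of $\mathbb{R}^n$, so it is a convex problem.

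There is no real obstacle here. The only point needing care is the boundary case $\rho = \sigma_g$: there the quadratic remainder vanishes, so we obtain convexity but not strong convexity. That is still enough for the claim as stated. If one also wanted uniqueness of $\x^{t+1}$, one would need $\rho > \sigma_g$, which gives $(\rho - \sigma_g)$-strong convexity; the statement does not require this.

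Finally, note that $g$ may be extended-valued or nonsmooth. The argument above never differentiates $g$; it uses only the convexity of $g + \frac{\sigma_g}{2}\|\cdot\|_2^2$, so it applies without change in that setting.
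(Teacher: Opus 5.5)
Your proof is correct and follows the paper's argument exactly: the paper also expands the quadratic, isolates $g(\x) + \frac{\sigma_g}{2}\|\x\|_2^2$ (convex by weak convexity), and notes that the remaining $\frac{\rho-\sigma_g}{2}\|\x\|_2^2$ term plus affine terms are convex when $\rho \ge \sigma_g$. Your side remarks on the boundary case $\rho=\sigma_g$ (convex but not necessarily strongly convex) and on nonsmooth $g$ are accurate and do not change the argument.
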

\begin{proof}
We can rewrite $\widehat{g}$ as follows,
\begin{align*}
    \widehat{g}(\x) &= g(\x) + \frac{\rho}{2} \| \x \|_2^2 + \rho \langle \x, -\z^t + \h v^t \rangle + \frac{\rho}{2} \| -\z^t + \h v^t \|_2^2\\
    &= \Big(g(\x) + \frac{\sigma_g}2\|\h x\|_2^2\Big) + \frac{\rho-\sigma_g}{2} \| \x \|_2^2 + \rho \langle \x, -\z^t + \h v^t \rangle + \frac{\rho}{2} \| -\z^t + \h v^t \|_2^2.
\end{align*}
Since $g$ is weakly convex with modulus $\sigma_g>0$, the first term in the above equation is convex, and hence $\widehat{g}$ is a convex function if $\rho\geq \sigma_g$.
\end{proof}
Likewise, the $\h z$-subproblem in \eqref{admm} can be written as 
\begin{align*}
 \min_{\h z} \quad \hat{f}(\h z) \triangleq f(\h z) + \frac{\rho}{2}\|\h x^{t+1} - \h z + \h v^t\|_2^2,
\end{align*}
which is a convex problem, due to the convexity assumption of $f(\cdot)$.
Consequently, if $\rho \ge \sigma_g$, both the $\h x$- and $\h z$-subproblems in \eqref{admm} are convex, and their global optima can be obtained by standard optimization methods such as gradient descent (GD) \citep{demyanov1978multistep} or Newton's method \citep{boyd2004convex}. 
The overall procedure is summarized in
Algorithm \ref{alg:admm}. Please refer to 
Supplementary Material 
(Appendices A.1, B.1, and C.1) for additional algorithmic details for specific choices of $f$ and $g$.

\subsection{Linearized ADMM}\label{sec:LADMM}

In many applications, the $\h x$-subproblem in \eqref{admm} can be computed in a single step using a proximal operator, whereas the $\h z$-subproblem involves the smooth term $f$ and typically has no closed-form solution; as a result, it may require an inner iterative routine at each outer iteration, which can dominate the overall runtime. 
To avoid nested loops, we consider a linearized ADMM variant that replaces $f(\h z)$ in the $\h z$-subproblem with a first-order approximation
at $\h z^t$, i.e.,
$$
f(\h z)\approx f(\h z^t)+\langle \nabla f(\h z^t),\,\h z-\h z^t\rangle.
$$

LADMM preserves the ADMM splitting structure, so the $\h x$-subproblem and the  dual update remain unchanged. It iterates as follows,
\begin{equation} \label{ladmm}
\left\{\begin{array}{l}
\h x^{t+1} \in \argmin_{\h x} \mathcal{L}(\h x, \h z^t; \h v^{t}) \\
\h z^{t+1} \in \argmin_{\h z} \ \widehat{f}(\h z; \, \h z^t) \\
\h v^{t+1} = \h v^t + \h x^{t+1} - \h z^{t+1},
\end{array}\right.
\end{equation}
where $\widehat{f}(\h z; \, \h z^t) = f(\h z^t) + \langle \nabla f(\h z^t), \h z - \h z^t \rangle + \frac{\rho}{2}\|\h x^{t+1} - \h z + \h v^t\|_2^2$. 
The $\h z$-update from \eqref{ladmm} can be computed in a single closed-form step, 
\begin{align}\label{ladmm_z_sol}
    \h z^{t+1} = \h x^{t+1}+\h v^t-\frac{\nabla f(\h z^t)}{\rho}.
\end{align}

Compared with ADMM, LADMM avoids iterative inner solves for the $\h z$-subproblem, often reducing per-iteration cost while preserving the same $\h x$-subproblem and dual step. The overall procedure is summarized in Algorithm \ref{alg:ladmm}. Detailed updates for the specified forms of $f$ and $g$ are provided in 
Supplementary Material (Appendices A.2, B.2, and C.2). 

\begin{algorithm}
\caption{LADMM for weakly convex minimization}
\label{alg:ladmm}
\begin{algorithmic}[1]
\State Parameters: $\rho \in \mathbb{R}^+$ and tMAX $\in \mathbb{Z}^+$; 
\State Initialize iterates $\h x^0, \h z^0, \h v^0$, and $t=0$; 
\While{$t<\text{tMAX}$} 
\State Compute $\h x^{t+1} \in \argmin_{\h x} g(\h x) + \rho \langle \h v^t, \h x \rangle + \frac{\rho}{2}\|\h x- \h z^t\|_2^2$;
\State Update $\h z^{t+1}$ via \eqref{ladmm_z_sol};
\State{$\h v^{t+1} = \h v^t + \h x^{t+1} -\h  z^{t+1}$;} \State{$t=t+1$;}
\EndWhile
\State \Return $\h x^*=\h x^t$
\end{algorithmic}
\end{algorithm}

\section{Theoretical analysis}\label{Algorithms and Analysis}

This section is devoted to the convergence analysis of ADMM and LADMM.
Specifically, in Section \ref{sec:wc}, we characterize weak convexity of a function and review the notion of a stationary solution, which forms the foundation of our subsequent analysis.
With these tools in place, we establish convergence guarantees for ADMM in Section~\ref{admm: conv} and extend the analysis to the LADMM scheme in Section \ref{sect:DC-ADMM}.

\subsection{Characterizing weak convexity and stationarity} \label{sec:wc} 

In the literature, there are a number of stationary solutions %
used for analyzing nonconvex and nondifferentiable problems. It turns out some of widely used stationary solutions are equivalent for the problem \eqref{prob:fplusg}. This can be shown by rewriting \eqref{prob:fplusg} as a difference-of-convex (DC) function (see Definition \ref{def:DC}, then apply the results from the literature \citep{pang2017computing,joki2018double,cui2021modern}. To do so, we introduce a DC function in Definition~\ref{def:DC}.

\begin{definition}[DC function/program] \label{def:DC}
    A function is called difference-of-convex (DC) if one can rewrite the function as a difference of two convex functions. A DC program is defined by
\begin{align} \label{prob:plainDC}
   \min_{\x\in \mathbb{R}^n} \theta(\x) \triangleq \phi(\x) - \psi(\x), 
\end{align}
where both functions $\phi$ and $\psi$ are convex. 
\end{definition}

It is straightforward that weakly convex functions are a special case of DC functions: if $g$ is weakly convex with modulus $\sigma_g$, then
\begin{align} \label{prob:trivialDC}
    \zeta(\x) = \underbrace{\zeta(\x) + \frac{\sigma_{g}}{2} \| \x \|_2^2}_{\rm{convex}} - \frac{\sigma_{g}}{2} \| \x \|_2^2,
\end{align}
so the objective function $\zeta$ admits a DC decomposition. Since the problem \eqref{prob:fplusg} has a trivial DC representation \eqref{prob:trivialDC} with a differentiable concave component, 
one can apply \citet[Proposition~6.1.10]{cui2021modern}, which establishes the equivalence among directional stationary solutions, critical points, and Clarke stationary solutions of \eqref{prob:trivialDC}. We refer to \citet{cui2021modern} for a detailed discussion of these notions. These stationary concepts, defined respectively via the subdifferential, directional derivatives, and the Clarke subdifferential \citep{bazaraa2006nonlinear,clarke1983optimization}, are standard in the DC programming literature. Leveraging their equivalence, we adopt the directional stationary solution for our analysis, as formalized in Definition \ref{def:Dstat}. Consequently, proving that ADMM or LADMM produces a directional stationary point immediately establishes the other two stationarity properties.

We provide an equivalent characterization of weakly convex functions (see Definition~\ref{def:weaklycvx}). As shown in Proposition~\ref{prop:wc}, this result is a specialization of \cite[Proposition~4.8]{vial1983strong} and is stated using the directional derivative (Definition \ref{def:d-derivative}) so that the existing result can be applied directly in our analysis.

\begin{definition}[Directional derivative]\label{def:d-derivative}
The directional derivative of a function $g$ at a point $\x$ in the direction $\h d$, denoted $g'(\x; \h d)$, is defined as 
\begin{align*}
    g' (\x; \h d) = \lim \limits_{\tau \downarrow 0} \frac{g(\x + \tau \h d) - g(\x)}{\tau},
\end{align*}
if the limit exists. In this case, the function $g(x)$ is directionally differentiable.
\end{definition}

\begin{definition}[directional stationary point] \label{def:Dstat}
    A point $\x^*\in \mathbb{R}^n$ is a directional-stationary solution (or d-stationary for short)  of \eqref{prob:fplusg} if $\zeta ' (\x^*; \x - \x^*) \geq 0$ for all $\x\in \mathbb{R}^n$.
\end{definition}

For any directionally differentiable function, we present an equivalence condition of the weakly convexity of the function. We include a proof to make the paper self-contained.

\begin{proposition} \label{prop:wc}
     The function $g$ is weakly convex if and only if $g$ is directionally differentiable and there exists $\sigma_g > 0$ for which
    \begin{align} \label{ineq:wc_dderiv}
    - g(\x) + g(\y) + g'(\y; \x - \y)  \leq \frac{\sigma_g}{2} \|\x - \y \|_2^2, \text{ for all } \x, \y \in \mathbb{R}^n. 
    \end{align}
\end{proposition}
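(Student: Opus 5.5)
The plan is to reduce everything to the convex function $h(\x) \triangleq g(\x) + \frac{\sigma_g}{2}\|\x\|_2^2$ and use two standard facts about finite convex functions on $\mathbb{R}^n$. First, they are directionally differentiable everywhere. Second, a function is convex if and only if it is directionally differentiable and satisfies the gradient-type inequality $h(\x) \ge h(\y) + h'(\y;\x-\y)$ for all $\x,\y$. The bridge between $g$ and $h$ is the identity
\begin{align*}
h'(\y;\h d) = g'(\y;\h d) + \sigma_g\langle \y, \h d\rangle,
\end{align*}
valid whenever either directional derivative exists, because the quadratic term is differentiable with gradient $\sigma_g \y$. The remaining work is the algebra
\begin{align*}
h(\x) - h(\y) - h'(\y;\x-\y) = g(\x) - g(\y) - g'(\y;\x-\y) + \tfrac{\sigma_g}{2}\|\x-\y\|_2^2,
\end{align*}
which follows by expanding $\frac{\sigma_g}{2}\|\x\|_2^2 - \frac{\sigma_g}{2}\|\y\|_2^2 - \sigma_g\langle \y,\x-\y\rangle = \frac{\sigma_g}{2}\|\x-\y\|_2^2$. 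With this identity, the gradient inequality for $h$ is exactly \eqref{ineq:wc_dderiv}, i.e.\ $-g(\x)+g(\y)+g'(\y;\x-\y) \le \frac{\sigma_g}{2}\|\x-\y\|_2^2$.

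\emph{Necessity.} Suppose $g$ is weakly convex with modulus $\sigma_g$, so $h$ is convex and finite. For fixed $\y$ and $\h d$, the difference quotient $\tau \mapsto (h(\y+\tau\h d)-h(\y))/\tau$ is nondecreasing in $\tau>0$, by the three-slope inequality for the convex function $\tau\mapsto h(\y+\tau\h d)$. It is also bounded below, by $-(h(\y-\h d)-h(\y))$ via the same monotonicity extended to $\tau=-1$. Hence the limit $h'(\y;\h d)$ exists and is finite, and so $g = h - \frac{\sigma_g}{2}\|\cdot\|_2^2$ is directionally differentiable. Taking $\h d = \x-\y$ and comparing $\tau\downarrow 0$ with $\tau=1$ gives $h'(\y;\x-\y) \le h(\x)-h(\y)$. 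The identity above then yields \eqref{ineq:wc_dderiv}.

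\emph{Sufficiency.} Suppose $g$ is directionally differentiable and \eqref{ineq:wc_dderiv} holds. Then $h$ is directionally differentiable and satisfies $h(\x) \ge h(\y) + h'(\y;\x-\y)$ for all $\x,\y$. To get convexity, fix $\x_1,\x_2$ and $\lambda\in(0,1)$, and set $\y=\lambda\x_1+(1-\lambda)\x_2$. Applying the inequality with $\x=\x_1$ and with $\x=\x_2$ gives directions $(1-\lambda)(\x_1-\x_2)$ and $-\lambda(\x_1-\x_2)$. By positive homogeneity of $h'(\y;\cdot)$, which is immediate from the definition, these equal $(1-\lambda)h'(\y;\h e)$ and $\lambda h'(\y;-\h e)$ with $\h e=\x_1-\x_2$. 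Taking the convex combination gives
\begin{align*}
\lambda h(\x_1)+(1-\lambda)h(\x_2) \ge h(\y) + \lambda(1-\lambda)\big(h'(\y;\h e)+h'(\y;-\h e)\big).
\end{align*}

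The main obstacle is the sign of $h'(\y;\h e)+h'(\y;-\h e)$, since directional derivatives need not be linear, and this is where the argument must avoid circularity. I would show this sum is nonnegative directly from the hypothesis applied along the line, rather than from any convexity of $h$. Restrict to $\varphi(s)=h(\y+s\h e)$. The hypothesis gives $\varphi(s)\ge\varphi(0)+\varphi'_+(0)\,s$ for $s>0$ and $\varphi(s)\ge\varphi(0)+\varphi'_-(0)\,s$ for $s<0$, with a one-sided derivative at every point. From these one deduces $\varphi'_+(0)\ge\varphi'_-(0)$, which is precisely $h'(\y;\h e)+h'(\y;-\h e)\ge 0$. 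The route I would try is a mean-value/Denjoy-type argument: a function with one-sided derivatives everywhere lying above all its one-sided tangent lines is convex. The fallback, if that proves delicate, is to invoke \citet[Proposition~4.8]{vial1983strong} for the one-dimensional step, since the paper states Proposition~\ref{prop:wc} as a specialization of that result.
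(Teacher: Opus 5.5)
Your plan is the paper's proof. It uses the same auxiliary function $h=\tilde g$ and the same identity $h'(\y;\h d)=g'(\y;\h d)+\sigma_g\langle\y,\h d\rangle$. Necessity comes from the first-order inequality for the convex $h$: you prove directional differentiability via monotone difference quotients, where the paper cites that DC functions are directionally differentiable. Sufficiency rests on the fact that a directionally differentiable $h$ with $h(\x)\ge h(\y)+h'(\y;\x-\y)$ for all $\x,\y$ is convex. The paper dispatches that fact in one line as ``the standard directional-derivative characterization of convexity,'' which is exactly your fallback.

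You are right that this step is not automatic. The map $\x\mapsto h(\y)+h'(\y;\x-\y)$ need not be affine, or even convex, so the supremum-of-affine-minorants argument does not apply. As written, however, your proposal leaves the step open. The convex-combination detour also does not make it easier. The sign of $h'(\y;\h e)+h'(\y;-\h e)$ cannot be read off from the hypothesis at $\y$ alone: $\varphi(s)=-|s|$ satisfies both one-sided tangent inequalities at $s=0$, yet $\varphi'(0;1)+\varphi'(0;-1)=-2$. Any argument that brings in the other points of the line already proves $\varphi$ convex, which is all you need.

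Here is a short self-contained way to close it. The restriction $\varphi(s)=h(\y_0+s\h e)$ inherits $\varphi(s')\ge\varphi(s)+\varphi'(s;s'-s)$ by positive homogeneity. It is continuous because its one-sided derivatives are finite. Suppose $\varphi$ is not convex. Then there are $a<b$ such that $\psi=\varphi-\ell$ is positive somewhere in $(a,b)$, where $\ell$ is the chord through $(a,\varphi(a))$ and $(b,\varphi(b))$. Since $\ell$ is affine, $\psi$ satisfies the same inequality, and $\psi(a)=\psi(b)=0$. For any $y\in(a,b)$ with $\psi(y)>0$, taking $s'=b$ and $s'=a$ gives
\[
\psi'(y;1)\le-\frac{\psi(y)}{b-y}<0,\qquad \psi'(y;-1)\le-\frac{\psi(y)}{y-a}<0 .
\]
So every point of the nonempty open set $\{y\in(a,b):\psi(y)>0\}$ is a strict local maximizer of $\psi$. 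That is impossible. Take a nondegenerate compact interval $I$ inside this set and a minimizer $m$ of the continuous $\psi$ on $I$. Then $\psi\ge\psi(m)$ on $I$, while $\psi<\psi(m)$ at points of $I$ arbitrarily close to $m$. Hence $h$ is convex along every line, and therefore convex. Your inequality $h'(\y;\h e)+h'(\y;-\h e)\ge 0$ then follows a posteriori.
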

\begin{proof}
If $g$ is weakly convex, then it is DC and therefore directionally differentiable, since the directional derivative of a DC function equals the difference of the directional derivatives of its convex components \citep{bazaraa2006nonlinear}. 
By applying the first-order condition of a convex function to $g(\x) + \frac{\sigma_g}{2} \| \x \|_2^2$, we have 
    \begin{align*}
        g(\y) + \frac{\sigma_g}{2} \| \y \|_2^2 + g'(\y; \x - \y) + \sigma_g \langle \y, \x - \y \rangle \leq g(\x) + \frac{\sigma_g}{2} \| \x \|_2^2 \quad \forall \x, \y,
    \end{align*}
    which is equivalent to \eqref{ineq:wc_dderiv}. %

For the other direction, we assume $g$ is directionally differentiable and there exists $\sigma_g>0$ such that
\eqref{ineq:wc_dderiv} holds. Define $\tilde g(\x)=g(\x)+\frac{\sigma_g}{2}\|\x\|_2^2$.
Since $\frac{\sigma_g}{2}\|\x\|_2^2$ is differentiable, $\tilde g$ is directionally differentiable and
$$
\tilde g'(\y;\x-\y)=g'(\y;\x-\y)+\sigma_g\langle \y,\x-\y\rangle.
$$
Rearranging \eqref{ineq:wc_dderiv} gives
$$
g(\x)\ge g(\y)+g'(\y;\x-\y)-\frac{\sigma_g}{2}\|\x-\y\|_2^2.
$$
Adding $\frac{\sigma_g}{2}\|\x\|_2^2$ to both sides and using
$\|\x\|_2^2-\|\x-\y\|_2^2=\|\y\|_2^2+2\langle \y,\x-\y\rangle$ yields
$$
\tilde g(\x)\ge \tilde g(\y)+\tilde g'(\y;\x-\y),\quad \forall \x,\y.
$$
By the standard directional-derivative characterization of convexity, $\tilde g$ is convex.
Therefore $g$ is weakly convex with modulus $\sigma_g$.
\end{proof}



Before proceeding, we highlight the technical features that distinguish our analysis from existing convergence results for nonconvex ADMM and DC programming methods. First, the weakly convex structure of $g$ makes the $\h x$-subproblem nonconvex unless the penalty parameter satisfies $\rho \geq \sigma_g$; this parameter condition is essential and enters explicitly in our sufficient-decrease argument (Lemma~\ref{lem:suff_decrease}). Our splitting constraint $\h x = \h z$ in \eqref{eq:WC-general} is a special case of the general linear constraint studied for nonconvex ADMM by \citet{hong2016convergence}; however, their convergence theory requires the nonconvex term to be differentiable with a Lipschitz-continuous gradient, an assumption our possibly nonsmooth weakly convex $g$ need not satisfy. Consequently, introducing the splitting variable in \eqref{eq:WC-general} does not by itself bring problem \eqref{prob:fplusg} within the scope of \citet{hong2016convergence}; our sufficient-decrease argument instead directly exploits the weak convexity modulus $\sigma_g$.

Second, for LADMM, linearizing $f$ in the $\h z$-subproblem introduces a lagged-gradient term that prevents the augmented Lagrangian from decreasing monotonically. To address this, we introduce the auxiliary sequence $\{\kappa_t\}$ (defined in \eqref{def:kappa}), which absorbs the lagged-gradient effect and restores a sufficient-decrease property (Lemma~\ref{lem:kappa_conv}). This differs in both mechanism and assumptions from existing linearized and proximal ADMM analyses: \citet{sun2018alternating} convexifies the nonconvex subproblem via a DCA-inspired linearization, but requires the concave component of the DC decomposition to be Lipschitz, which our weakly convex $g$ need not satisfy; \citet{yashtini2022convergence} establishes global convergence of a proximal linearized ADMM under the Kurdyka--\L ojasiewicz (KL) property, but requires the nonsmooth term to decompose as the sum of a proper lower-semicontinuous function and a coercive function, a structural assumption we do not impose.

Third, although directional stationarity coincides with critical-point and Clarke stationarity for DC problem  (Proposition~6.1.10, \citet{cui2021modern}), our analysis reaches this common set of points directly via directional derivatives, under the explicit parameter conditions above, rather than through the Kurdyka--\L ojasiewicz or semi-algebraic assumptions required by \citet{li2015global}, \citet{sun2018alternating}, and \citet{yashtini2022convergence} to obtain their critical-point guarantees.

\subsection{Convergence of ADMM}\label{admm: conv}
We establish the convergence of ADMM in \eqref{admm} based on the following assumptions about the objective function in \eqref{prob:fplusg}, 
\begin{enumerate}
\renewcommand{\theenumi}{A\arabic{enumi}}
\renewcommand{\labelenumi}{(\theenumi)}
    \item The function $\zeta$ is coercive, i.e., $\zeta(\h x) \to \infty$ as $\|\h x\|_2 \to \infty$; \label{assume:coercive}
    
    \item The function $\zeta$ is lower-bounded;  \label{assume:lb}
    
    \item The function $g$ is proper, lower semicontinuous, finite-valued, weakly convex with modulus $\sigma_g$, and directionally differentiable. \label{assume:ggrad_lip}

    \item \label{assume:fgrad_lip} The function $f$ has Lipschitz continuous gradient, i.e., there exists a constant $\lipf > 0$ such that
    \begin{align} 
    \| \nabla f(\x)- \nabla f(\y) \|_2 \leq \lipf \| \x - \y \|_2, \quad \forall \, \x, \y. \label{ineq:fgrad_lip}
    \end{align} 
\end{enumerate}

The roles of these assumptions are as follows. The weak convexity of $g$ and the
Lipschitz continuity of $\nabla f$ are used to establish the sufficient-decrease estimates
for the augmented Lagrangian in ADMM. The lower boundedness assumption ensures that the corresponding
sequence $\{\mathcal{L}(\h x^t,\h z^t;\h v^t)\}_{t = 0}^\infty$ is lower bounded. The coercivity of $\zeta$ is used to obtain boundedness
of the iterates $\{(\h x^t, \h z^t, \h v^t)\}_{t=0}^\infty$. Together, these descent and boundedness properties allow us to
extract convergent subsequences and show that the successive differences vanish.
Finally, the properness, lower semicontinuity, finite-valuedness, and directional differentiability of $g$
ensure that the directional derivatives and limiting arguments used in the stationarity
proof are well defined.

We show in Lemma \ref{lem:suff_decrease} that the augmented Lagrangian function \eqref{eq:aug-Lag} decreases sufficiently at each ADMM iteration under certain conditions, followed by Theorem \ref{thm:admm_convergence} for the convergence of the iterations to a d-stationary solution of the problem \eqref{prob:fplusg}. The key idea is that each primal update yields a descent controlled by the successive differences $\|\h x^{t+1} - \h x^t\|_2^2$ and $\|\h z^{t+1} - \h z^t\|_2^2$; telescoping then forces these differences to vanish.

\begin{lemma} \label{lem:suff_decrease}
(Sufficient decrease)   If $\rho > \max \{\sigma_{g} , \sqrt{2} \lipf \}$ and Assumptions \ref{assume:ggrad_lip}--\ref{assume:fgrad_lip} hold, then each ADMM iteration \eqref{admm} satisfies
\begin{align} \label{lem:suff_decrease_ineq}
\begin{aligned}
&\mathcal{L}(\h x^{t+1}, \h z^{t+1}; \h v^{t+1}) - \mathcal{L}(\h x^t, \h z^t; \h v^t) \\
&\hspace{5pc} \leq \left( \frac{\sigma_{g} -\rho}{2} \right) \|\h x^{t+1} -\h x^t\|_2^2 + \left(\frac{\lipf^2}{\rho} -\frac{\rho}{2}\right)\|\h z^{t+1} - \h z^t\|_2^2.
\end{aligned}
\end{align} 
\end{lemma}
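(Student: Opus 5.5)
We split the change in the augmented Lagrangian into the three partial updates,
\begin{align*}
\mathcal{L}(\h x^{t+1}, \h z^{t+1}; \h v^{t+1}) - \mathcal{L}(\h x^t, \h z^t; \h v^t)
&= \big[\mathcal{L}(\h x^{t+1}, \h z^{t}; \h v^{t}) - \mathcal{L}(\h x^t, \h z^t; \h v^t)\big]\\
&\quad + \big[\mathcal{L}(\h x^{t+1}, \h z^{t+1}; \h v^{t}) - \mathcal{L}(\h x^{t+1}, \h z^t; \h v^t)\big]\\
&\quad + \big[\mathcal{L}(\h x^{t+1}, \h z^{t+1}; \h v^{t+1}) - \mathcal{L}(\h x^{t+1}, \h z^{t+1}; \h v^t)\big],
\end{align*}
and bound each bracket separately.

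\emph{The $\h x$-step.} Since $\rho\ge\sigma_g$, the function $\widehat g$ in \eqref{prob:gwidehat} is convex by Proposition~\ref{prop:x_sub_convex}. More precisely, it is $(\rho-\sigma_g)$-strongly convex, because it equals the convex function $g+\frac{\sigma_g}{2}\|\cdot\|_2^2$ plus $\frac{\rho-\sigma_g}{2}\|\cdot\|_2^2$ plus an affine term. At the minimizer $\h x^{t+1}$ we have $\widehat g'(\h x^{t+1};\h d)\ge 0$ for every $\h d$. Strong convexity then gives
\[
\widehat g(\h x^t)\ge \widehat g(\h x^{t+1})+\frac{\rho-\sigma_g}{2}\|\h x^t-\h x^{t+1}\|_2^2 .
\]
Equivalently, one can apply Proposition~\ref{prop:wc} with $\y=\h x^{t+1}$, $\x=\h x^t$ and add the exact quadratic expansion of the penalty term. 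Since $\mathcal{L}(\cdot,\h z^t;\h v^t)$ differs from $\widehat g$ only by a constant, the first bracket is at most $\frac{\sigma_g-\rho}{2}\|\h x^{t+1}-\h x^t\|_2^2$.

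\emph{The $\h z$-step.} The function $\hat f$ is $\rho$-strongly convex, since $f$ is convex. Its minimizer therefore satisfies
\[
\mathcal{L}(\h x^{t+1},\h z^{t+1};\h v^t)\le \mathcal{L}(\h x^{t+1},\h z^{t};\h v^t)-\frac{\rho}{2}\|\h z^{t+1}-\h z^t\|_2^2 .
\]

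\emph{The dual step.} The third bracket equals $\rho\langle \h v^{t+1}-\h v^t,\h x^{t+1}-\h z^{t+1}\rangle=\rho\|\h v^{t+1}-\h v^t\|_2^2$. This is an ascent term, so we must control it by $\|\h z^{t+1}-\h z^t\|_2^2$. We do this through the optimality condition of the $\h z$-subproblem:
\[
\nabla f(\h z^{t+1})-\rho\h v^t-\rho(\h x^{t+1}-\h z^{t+1})=0, \qquad\text{so}\qquad \nabla f(\h z^{t+1})=\rho\h v^{t+1}.
\]
This identity holds for every $t\ge 0$, and we assume it also holds at index $t$, i.e.\ $\nabla f(\h z^t)=\rho\h v^t$. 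That is automatic for $t\ge1$, and at $t=0$ it is an initialization requirement that we should note. Assumption~\ref{assume:fgrad_lip} then yields
\[
\|\h v^{t+1}-\h v^t\|_2\le \frac{\lipf}{\rho}\|\h z^{t+1}-\h z^t\|_2,
\]
and hence the third bracket is at most $\frac{\lipf^2}{\rho}\|\h z^{t+1}-\h z^t\|_2^2$.

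Summing the three bounds gives exactly \eqref{lem:suff_decrease_ineq}. The condition $\rho>\sqrt2\lipf$ makes the $\h z$-coefficient $\frac{\lipf^2}{\rho}-\frac{\rho}{2}$ negative, and $\rho>\sigma_g$ does the same for the $\h x$-coefficient. The main obstacle is the dual ascent term. Controlling it rests entirely on the identity $\rho\h v^{t}=\nabla f(\h z^{t})$ obtained from the $\h z$-optimality condition, together with the correct handling of the first iteration.

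The $\h x$-step needs some care in the nonsmooth setting. There we work with directional derivatives rather than gradients, using the optimality condition $\widehat g'(\h x^{t+1};\h x^t-\h x^{t+1})\ge0$. The strong convexity inequality is then applied in its directional-derivative form, which is valid for convex functions that are finite everywhere.
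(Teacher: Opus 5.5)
Your proposal is correct and follows the paper's proof. It uses the same three-way split of the change in $\mathcal{L}$, and your strong-convexity (quadratic-growth) bounds for the $\h x$- and $\h z$-steps are compact restatements of the paper's directional-derivative argument via Proposition~\ref{prop:wc} and of its explicit algebra with the $\h z$-optimality condition; the dual ascent term is controlled identically, through $\rho\h v^{t+1}=\nabla f(\h z^{t+1})$ and the Lipschitz gradient. Your caveat that $\rho\h v^{t}=\nabla f(\h z^{t})$ is automatic only for $t\ge 1$ is valid: the case $t=0$ requires $\h v^0=\nabla f(\h z^0)/\rho$ or telescoping from $t=1$, a point the paper's proof uses silently.
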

\begin{proof}
Since $\widehat g$ is convex by Proposition \ref{prop:x_sub_convex} and $\h x^{t+1}\in\argmin_\h x \widehat g(\h x)$ by the ADMM scheme \eqref{admm}, we have
$\h 0\in \partial \widehat g(\h x^{t+1})$, which implies that $\h x^{t+1}$ is a d-stationary point of $\widehat{g}$, i.e., $\widehat g'(\h x^{t+1};\h x^t-\h x^{t+1})\ge 0$. Therefore, we can get
\begin{align*}
    \mathcal{L}(\h x^{t+1}, \h z^t; \h v^t) - \mathcal{L}(\h x^t,\h z^t ; \h v^t) 
    &= \widehat{g}(\h x^{t+1}) - \widehat{g}(\h x^t) \\
    &\leq \widehat{g}(\h x^{t+1}) - \widehat{g}(\h x^t) + \widehat{g}'(\h x^{t+1}; \x^{t} - \x^{t+1}).
\end{align*}
Using the definition of $\hat g$ in \eqref{prob:gwidehat}, we have
\begin{align*}
&\widehat{g}(\h x^{t+1}) - \widehat{g}(\h x^t) + \widehat{g}'(\h x^{t+1}; \x^{t} - \x^{t+1})\\
    =& g(\x^{t+1}) + \frac{\rho}{2} \| \x^{t+1} - \z^t + \h v^t \|_2^2 
    - g(\x^{t}) - \frac{\rho}{2} \| \x^{t} - \z^t + \h v^t \|_2^2 \\
    &\qquad + g'(\h x^{t+1}; \x^{t} - \x^{t+1}) + \rho \langle \x^{t+1} - \z^t + \h v^t,\  \x^t - \x^{t+1} \rangle  \\
    =& \left\{ - g(\x^{t}) + g(\x^{t+1}) + g'(\h x^{t+1}; \x^{t} - \x^{t+1}) \right\} \\
    &\qquad + \frac{\rho}{2} \left\{  \| \x^{t+1} - \z^t + \h v^t \|_2^2 - \| \x^{t} - \z^t + \h v^t \|_2^2 + 2 \langle \x^{t+1} - \z^t + \h v^t,\  \x^t - \x^{t+1} \rangle\right\} \\[0.5pc]
    =& \left\{ - g(\x^{t}) + g(\x^{t+1}) + g'(\h x^{t+1}; \x^{t} - \x^{t+1}) \right\} - \frac{\rho}{2} \| \x^t - \x^{t+1} \|_2^2. 
\end{align*}
It further follows from Assumption~\ref{assume:ggrad_lip} that
\begin{align} \label{x_dec}
    \mathcal{L}(\h x^{t+1}, \h z^t; \h v^t) - \mathcal{L}(\h x^t,\h z^t ; \h v^t) 
    &\leq \frac{(\sigma_g - \rho)}{2} \| \x^t - \x^{t+1} \|_2^2. 
\end{align}
For the $\h z$-subproblem, the optimality of $\z^{t+1}$ yields 
\begin{align} \label{equality:z_opt}
\nabla f(\h z^{t+1}) - \rho ( \h x^{t+1} - \h z^{t+1} + \h v^t) = \h 0,
\end{align}
which leads to the following chain of inequalities 
\begin{align}
    &\mathcal{L}(\h x^{t+1}, \h z^{t+1}; \h v^t) - \mathcal{L}(\h x^{t+1}, \h z^t; \h v^t) \nonumber \\
    =& f(\h z^{t+1}) - f(\h z^t) +\frac{\rho}{2} \Big( \, \|\h x^{t+1} - \h z^{t+1} + \h v^t\|_2^2 -  \|\h x^{t+1} - \h z^t + \h v^t\|_2^2 \, \Big) \nonumber \\
    \leq & - \langle \nabla f(\h z^{t+1}),  \h z^{t} - \h z^{t+1} \rangle +\frac{\rho}{2}  \langle \, \z^{t+1} + \z^{t} - 2(\x^{t+1} + \h v^{t}), \z^{t+1} - \z^{t} \, \rangle \text{ by convexity of } f \nonumber\\
    =& \rho \langle \x^{t+1} - \z^{t+1} + \h v^{t}, \z^{t+1} - \z^{t} \rangle +\frac{\rho}{2}  \langle \, \z^{t+1} + \z^{t} - 2(\x^{t+1} + \h v^{t}), \z^{t+1} - \z^{t} \, \rangle \text{ by } \eqref{equality:z_opt} \nonumber\\
    =& - \frac{\rho}{2} \| \z^t - \z^{t+1} \|_2^2. \label{ineq:z-sub}
\end{align}

Lastly, using the $\h v$-update \eqref{admm} and the optimality of $\z^{t+1}$ in \eqref{equality:z_opt}, we obtain
\begin{align} \label{eq:v_update}
\h v^{t+1}  = \h v^t +\h x^{t+1} - \h z^{t+1} = \frac{\nabla f(\h z^{t+1})}{\rho}, 
\end{align}
which then yields
\begin{align} \label{v_dec}
&\mathcal{L}(\h x^{t+1}, \h z^{t+1}; \h v^{t+1}) - \mathcal{L}(\h x^{t+1}, \h z^{t+1}; \h v^t) \nonumber \\
=& \rho \langle \h v^{t+1} - \h v^{t}, \h x^{t+1}-\h z^{t+1}\rangle \nonumber =\rho\|\h v^{t+1} - \h v^t\|_2^2 \nonumber \\
=&\frac{1}{\rho}\|\nabla f(\h z^{t+1})-\nabla f(\h z^t)\|_2^2 \leq \frac{\lipf^2}{\rho}\|\h z^{t+1} - \h z^t\|_2^2.
\end{align}
The last inequality is obtained by Assumption \ref{assume:fgrad_lip} where the Lipschitz constant of the gradient of $f$ is defined in \eqref{ineq:fgrad_lip}. 
Combining the inequalities \eqref{x_dec}, \eqref{ineq:z-sub} and \eqref{v_dec}, we complete the proof.
\end{proof}

\begin{theorem} \label{thm:admm_convergence}
  If $\rho > \max\{\sigma_g,\sqrt{2}\lipf\}$ and Assumptions \ref{assume:coercive}--\ref{assume:fgrad_lip} hold, 
  then the iterates $\{(\x^{t}, \z^{t}, \h v^{t})\}_{t=1}^{\infty}$ generated by ADMM in \eqref{admm} has a subsequence convergent to a $d$-stationary point of problem \eqref{prob:fplusg}. 
\end{theorem}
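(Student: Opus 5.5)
Our plan follows the standard three-step template: monotone decrease and lower boundedness of the augmented Lagrangian, boundedness of the iterates, and passage to the limit in the optimality conditions of the subproblems.

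\textbf{Step 1 (summable differences).} By Lemma~\ref{lem:suff_decrease}, the condition $\rho>\max\{\sigma_g,\sqrt2\lipf\}$ makes both coefficients $\frac{\sigma_g-\rho}{2}$ and $\frac{\lipf^2}{\rho}-\frac{\rho}{2}$ strictly negative. Hence $\mathcal L_t\triangleq\mathcal L(\x^t,\z^t;\h v^t)$ is nonincreasing for $t\ge1$.

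We then bound $\mathcal L_t$ from below. By \eqref{eq:v_update}, $\rho\h v^t=\nabla f(\z^t)$ for $t\ge1$, so
\[
\mathcal L_t=g(\x^t)+f(\z^t)+\langle\nabla f(\z^t),\x^t-\z^t\rangle+\tfrac{\rho}{2}\|\x^t-\z^t\|_2^2 .
\]
The descent lemma gives $f(\x^t)\le f(\z^t)+\langle\nabla f(\z^t),\x^t-\z^t\rangle+\frac{\lipf}{2}\|\x^t-\z^t\|_2^2$. Therefore
\[
\mathcal L_t\ \ge\ \zeta(\x^t)+\tfrac{\rho-\lipf}{2}\|\x^t-\z^t\|_2^2\ \ge\ \zeta(\x^t),
\]
which is bounded below by \ref{assume:lb}. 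Telescoping the sufficient-decrease inequality then gives $\sum_t\big(\|\x^{t+1}-\x^t\|_2^2+\|\z^{t+1}-\z^t\|_2^2\big)<\infty$, so both successive differences tend to zero. Using \eqref{eq:v_update} with \ref{assume:fgrad_lip}, $\|\h v^{t+1}-\h v^t\|\le\frac{\lipf}{\rho}\|\z^{t+1}-\z^t\|\to0$. Since $\h v^{t+1}-\h v^t=\x^{t+1}-\z^{t+1}$, this also yields $\x^t-\z^t\to0$.

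\textbf{Step 2 (boundedness).} From the lower bound above, $\zeta(\x^t)\le\mathcal L_t\le\mathcal L_1$, so coercivity \ref{assume:coercive} bounds $\{\x^t\}$. Because $\x^t-\z^t\to0$, $\{\z^t\}$ is bounded as well. Then $\h v^t=\nabla f(\z^t)/\rho$ is bounded by Lipschitz continuity of $\nabla f$. We extract a subsequence $(\x^{t_k},\z^{t_k},\h v^{t_k})\to(\x^*,\z^*,\h v^*)$. By Step 1, $\x^*=\z^*$, $\rho\h v^*=\nabla f(\x^*)$, and the shifted sequences $\x^{t_k+1},\z^{t_k+1}$ have the same limits.

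\textbf{Step 3 (d-stationarity) — the main obstacle.} Because $g$ is nonsmooth, we cannot simply pass to the limit in a gradient equation. Instead we use the minimizing property of $\x^{t+1}$ in \eqref{prob:gwidehat}: for every $\x$,
\[
g(\x^{t+1})+\tfrac\rho2\|\x^{t+1}-\z^t+\h v^t\|_2^2\le g(\x)+\tfrac\rho2\|\x-\z^t+\h v^t\|_2^2 .
\]
Along $t=t_k$, lower semicontinuity of $g$ on the left gives, with $\h w=\h v^*$,
\[
g(\x^*)+\tfrac\rho2\|\h v^*\|_2^2\le g(\x)+\tfrac\rho2\|\x-\x^*+\h v^*\|_2^2\quad\forall\x .
\]
Thus $\x^*$ minimizes $g(\cdot)+\frac\rho2\|\cdot-\x^*+\h v^*\|_2^2$. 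Taking $\x=\x^*+\tau\h d$, dividing by $\tau$, and letting $\tau\downarrow0$ (using directional differentiability from \ref{assume:ggrad_lip}) yields
\[
g'(\x^*;\h d)+\rho\langle\h v^*,\h d\rangle\ge0 .
\]
Substituting $\rho\h v^*=\nabla f(\x^*)$, this becomes $\zeta'(\x^*;\h d)=g'(\x^*;\h d)+\langle\nabla f(\x^*),\h d\rangle\ge0$ for all $\h d$, which is Definition~\ref{def:Dstat}.

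The delicate point is the limit inequality: we need $g(\x^{t_k+1})$ to be controlled from below by $g(\x^*)$, which lower semicontinuity supplies. Note that no continuity of $g$ is needed on the right-hand side, since $\x$ is fixed there.
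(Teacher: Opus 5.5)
Your proof is correct. Steps 1 and 2 follow the paper's argument, with one minor variation: you bound $\{\z^t\}$ using $\x^t-\z^t=\h v^t-\h v^{t-1}\to\h 0$, whereas the paper uses the term $\frac{\rho-\lipf}{2}\|\x^t-\z^t\|_2^2$ in the lower bound on $\mathcal L_t$.

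The real difference is in the stationarity step.
\begin{itemize}
\item \textbf{Paper.} It passes to the limit in the global-minimality inequalities of \emph{both} subproblems, obtaining \eqref{ineq:L_x1} and \eqref{ineq:L_z1}, and adds them with $\z=\x$. The multiplier cross terms cancel, so $\x^*$ globally minimizes $\widehat\zeta(\x)=\zeta(\x)+\rho\|\x-\x^*\|_2^2$. D-stationarity then follows because $\widehat\zeta$ and $\zeta$ have the same directional derivatives at $\x^*$.
\item \textbf{You.} You use only the $\x$-subproblem limit \eqref{ineq:L_x1}. Taking its one-sided directional derivative gives $g'(\x^*;\h d)+\rho\langle\h v^*,\h d\rangle\ge 0$. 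You then identify $\rho\h v^*=\nabla f(\x^*)$ from \eqref{eq:v_update} and the continuity of $\nabla f$.
\end{itemize}

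What each buys:
\begin{itemize}
\item Your route is more direct and makes the KKT structure of the limit explicit: the limiting multiplier is exactly $\nabla f(\x^*)/\rho$.
\item The paper's route, in this step, uses only function-value inequalities and never needs to identify the limiting multiplier. It also yields the proximal fixed-point property $\x^*\in\argmin_{\x}\{\zeta(\x)+\rho\|\x-\x^*\|_2^2\}$ as an intermediate statement.
\end{itemize}

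Your explicit appeal to lower semicontinuity of $g$ when passing to the limit fills a step the paper leaves implicit. Here it is harmless either way, since $g$ is finite-valued and weakly convex, hence continuous.
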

\begin{proof}
Using the assumption that $\rho > \max\{\sigma_g, \sqrt{2}\lipf\}$, we have $ \frac{\sigma_g -\rho}{2} < 0$ and $\frac{\lipf^2}{\rho} -\frac{\rho}{2} < 0$. The telescoping summation of inequality \eqref{lem:suff_decrease_ineq} from $t=0$ to $T$ for an arbitrary integer $T = 0,1,\cdots$ leads to
\begin{equation} \label{ineq:telescope}
\begin{aligned}
&\mathcal{L}(\h x^{T+1},\h z^{T+1};\h v^{T+1}) -\mathcal{L}(\h x^0,\h z^0;\h v^0)   \\
&\hspace{1pc} \leq
\left( \frac{\sigma_g-\rho}{2} \right) \sum_{t=0}^T\|\h x^{t+1}-\h x^t\|_2^2  
+ \left(\frac{\lipf^2}{\rho} -\frac{\rho}{2}\right) \sum_{t=0}^T\|\h z^{t+1}-\h z^t\|_2^2 \le 0, 
\end{aligned}
\end{equation}
which means that the sequence $\{\mathcal{L}(\h x^t,\h z^t;\h v^t)\}_{t = 0}^\infty$  is upper-bounded by $\mathcal{L}(\h x^0,\h z^0;\h v^0)$.

Additionally, using Assumptions \ref{assume:lb} and \ref{assume:fgrad_lip} together with \eqref{eq:v_update}, we obtain a lower bound 
of $\mathcal L(\h x^{T+1}, \h z^{T+1}; \h v^{T+1}) $, i.e.,
\begin{align} \label{ineq:upperL}
&\mathcal L(\h x^{T+1}, \h z^{T+1}; \h v^{T+1}) \notag \\
=& \displaystyle  f(\h z^{T+1}) - f(\h x^{T+1})+f(\h x^{T+1}) + g(\h x^{T+1}) + \rho\langle\h v^{T+1}, \h x^{T+1}-\h z^{T+1}\rangle  + \frac \rho 2 \|\h x^{T+1}-\h z^{T+1}\|_2^2\notag  \\
\ge & f(\h x^{T+1}) + g(\h x^{T+1}) + \langle\rho\h v^{T+1}-\nabla f(\h z^{T+1}), \h x^{T+1}-\h z^{T+1}\rangle + \frac {\rho-\lipf} 2 \|\h x^{T+1}-\h z^{T+1}\|_2^2  \notag \\
=& 	 f(\h x^{T+1}) + g(\h x^{T+1})  + \frac {\rho-\lipf} 2 \|\h x^{T+1}-\h z^{T+1}\|_2^2,
\end{align}
which also implies that the sequence $\{f(\h x^t) + g(\h x^t)\}_{t = 0}^\infty$ is upper-bounded  if 
$\rho>\lipf$. 
Assumption \ref{assume:coercive} regarding the coerciveness of $\zeta(\h x)$ guarantees that $\{\h x^t\}_{t = 1}^\infty$ is bounded, so is $\{\h z^t\}_{t = 1}^\infty$ by \eqref{ineq:upperL}. To show the boundedness of $\{\h v^t\}_{t = 1}^\infty$, we use \eqref{eq:v_update} to obtain
\[
\|\h v^{T+1} - \h v^1\|_2 \le \frac{\lipf}{\rho}\|\h z^{T+1} - \h z^1\|_2, 
\] 
thus leading to
\begin{equation}\label{ineq:ch4_v_upper}
   \|\h v^{T+1}\|_2\le \|\h v^1\|_2 + \frac{\lipf}{\rho} (\|\h z^{T+1}\|_2+\|\h z^1\|_2),  \; \forall \, T = 0,1,\cdots
\end{equation}
The boundedness of  $\{\h z^t\}_{t = 1}^\infty$ guarantees the boundedness of $\{\h v^t\}_{t = 1}^\infty$. 
Consequently, we show that the sequence $\{(\h x^t, \h z^t, \h v^t)\}_{t=1}^\infty$ is bounded. By the Bolzano-Weierstrass Theorem, it
therefore admits a convergent subsequence, denoted by $(\h x^{t_j}, \h z^{t_j}, \h v^{t_j}) \rightarrow (\h x^*, \h z^*, \h v^*)$ as $t_j \rightarrow \infty$.

$\mathcal{L}(\h x^t,\h z^t;\h v^t)$ is monotonically decreasing and lower bounded due to Lemma~\ref{lem:suff_decrease} and Assumption \ref{assume:lb}.
By letting $T\rightarrow \infty$ in  \eqref{ineq:telescope}, we obtain that 
 $\sum_{t = 0}^\infty\|\h x^{t+1}-\h x^t\|_2^2$ and $\sum_{t = 0}^\infty \|\h z^{t+1}-\h z^t\|_2^2$ are finite, and hence
 $\h x^{t+1}-\h x^t \to 0$ and $\h z^{t+1}-\h z^t \to 0$ as $t \to \infty$.
By \eqref{v_dec}, each term $\|\h v^{t+1} - \h v^t \|_2^2$ is bounded by $\|\h z^{t+1} - \h z^t \|_2^2$, which implies that $\h v^{t+1}-\h v^t \to 0$. Since $(\h x^{t_j}, \h z^{t_j}, \h v^{t_j}) \rightarrow (\h x^*, \h z^*, \h v^*)$, we have $(\h x^{t_j+1}, \h z^{t_j+1}, \h v^{t_j+1}) \rightarrow (\h x^*, \h z^*, \h v^*)$ and $\h x^*=\h z^*$ due to the $\h v$-update.

It remains to show $(\h x^*, \h z^*, \h v^*)$ is a d-stationary point of the problem \eqref{prob:fplusg}. 
    By the iterative scheme  \eqref{admm}, we have
    \begin{align*}
    &\mathcal{L}(\h x^{t_j+1},\h z^{t_j},\h v^{t_j}) \le \mathcal{L}(\h x,\h z^{t_j},\h v^{t_j}),  \quad \forall \h x, \\
    &\mathcal{L}(\h x^{t_j+1},\h z^{t_j+1},\h v^{t_j}) \le \mathcal{L}(\h x^{t_j+1},\h z,\h v^{t_j}),  \quad \forall \h z.
    \end{align*}
    Letting $t_j \to \infty$, we have $\mathcal{L}(\h x^*,\h z^*,\h v^*) \le \mathcal{L}(\h x,\h z^*,\h v^*)$ for all $\h x$ and $\mathcal{L}(\h x^*,\h z^*,\h v^*) \le \mathcal{L}(\h x^*,\h z,\h v^*)$ for all $\h z$, thus leading to
    \begin{align} \label{ineq:L_x1}
     g(\h x^*) + \frac{\rho}{2}\|\h v^*\|_2^2    \le  g(\h x) + \frac{\rho}{2}\| \h x - \h z^* + \h v^* \|_2^2,\quad \forall \h x,
    \end{align}
    and
    \begin{align} \label{ineq:L_z1}
    f(\h z^*) + \frac{\rho}{2}\|\h v^*\|_2^2 \le f(\h z) + \frac{\rho}{2}\|\h x^* - \h z + \h v^* \|_2^2, \quad \forall \h z.
    \end{align}
   We fix a point $\h x$ and choose $\h z = \h x$ in \eqref{ineq:L_z1}. Combining \eqref{ineq:L_x1} and \eqref{ineq:L_z1} yields
    \begin{align}\label{ineq:comp_objval}
    &f(\h x^*) + g(\h x^*) + \rho\|\h v^*\|_2^2
    \le 
    f(\h x) + g(\h x) + \frac \rho 2\left(\|\h x-\h x^*+ \h v^*\|_2^2+\|\h x^*-\h x+ \h v^*\|_2^2\right), 
    \end{align}
    where we use $\h x^*=\h z^*$.
    Let us define  $\widehat{\zeta}(\h x) \triangleq f(\h x) + g(\h x) + \rho\|\h x - \h x^*\|_2^2$. It follows from \eqref{ineq:comp_objval} that 
    \begin{align*}
    \widehat{\zeta}(\h x^*) = f(\h x^*) + g(\h x^*) \le f(\h x) + g(\h x) + \rho \|\h x - \h x^*\|_2^2 = \widehat{\zeta}(\h x), \, \forall \h x,
    \end{align*}
    implying $\h x^*$ is a global minimum 
    for $\widehat{\zeta}$ and hence a d-stationary solution. We show functions $\zeta$ and $\widehat{\zeta}$ have the same directional derivative
    at the point $\h x = \h x^*$ by the following calculations:
     \begin{align*}
    \widehat{\zeta}^{\, \prime}(\h x^*; \h d)  
    & = \lim_{h\rightarrow 0^+}\dfrac{\widehat{\zeta}(\h x^* + h \h d)-\widehat{\zeta}(\h x^*) }{h} \\
        & = \lim_{h\rightarrow 0^+}\dfrac{\zeta(\h x^* + h \h d) + \rho \|\h x^* + h\h d - \h x^*\|_2^2}{h} -\lim_{h\rightarrow 0^+} \frac{\zeta(\h x^*)+ \rho\|\h x^* - \h x^*\|_2^2}{h}
        \\
        & = \lim_{h\rightarrow 0^+}\dfrac{\zeta(\h x^* + h \h d) -\zeta(\h x^*)  + \rho h^2 \|\h d\|_2^2}{h}
        \\
        & = \lim_{h\rightarrow 0^+}\dfrac{\zeta(\h x^* + h \h d) -\zeta(\h x^*) }{h} \\
        & = \zeta'(\h x^*; \h d), \quad \forall \, \h d.
    \end{align*}
    Since $\h x^*$ is a d-stationary solution of 
    $\widehat{\zeta}(\h x)$, we have $0 \leq \widehat{\zeta}'(\x^*; \h d) = \zeta'(\x^*; \h d),$ which concludes the proof. 
\end{proof}

\begin{remark}
The convergence analysis above for ADMM, and the analysis for LADMM to follow, both assume that the respective subproblems are solved exactly. In practice, however, some subproblems are solved via iterative inner solvers. Extending the analysis to inexact updates would require additional assumptions on the stopping criteria, and we leave this as an important direction for future work.
\end{remark}

\subsection{Convergence of Linearized ADMM}\label{sect:DC-ADMM}

In this section, we establish the convergence of LADMM in \eqref{ladmm} under the same Assumptions
\ref{assume:coercive}--\ref{assume:fgrad_lip} as those used for ADMM.
The main technical challenge is that linearizing $f$ in the $\h z$-update
breaks the monotonicity of the augmented Lagrangian
$\mathcal{L}(\h x^t,\h z^t;\h v^t)$, as evident in
Lemma~\ref{lem:suff_decrease_ladmm}.
To address this issue, we introduce the sequence
$\{\kappa_t\}_{t=1}^\infty$ in \eqref{def:kappa}, obtained by augmenting $\mathcal{L}$
with an additional quadratic term.
While linearized ADMM has been studied for nonconvex problems, the present
analysis is tailored to weakly convex minimization, where the nonconvexity of
$g$ is quantified by the weak convexity modulus $\sigma_g$.
Building on  Lemma~\ref{lem:suff_decrease_ladmm},  we further introduce Lemma~\ref{lem:kappa_conv} to prove that $\kappa_t$ admits a sufficient decrease at each iteration,
which implies boundedness of the iterates and vanishing successive differences.
 Theorem~\ref{thm:linearADMM_conv} establishes that the LADMM iterates admit a subsequence
converging to a d-stationary point of \eqref{prob:fplusg} under a suitably stronger condition on $\rho$ compared to the requirement in ADMM.

\begin{lemma} \label{lem:suff_decrease_ladmm}
Under Assumptions \ref{assume:ggrad_lip}--\ref{assume:fgrad_lip}, each LADMM iteration \eqref{ladmm} satisfies
\begin{align} \label{lem:suff_decrease_ineq_ladmm}
\begin{aligned}
&\mathcal{L}(\h x^{t+1}, \h z^{t+1}; \h v^{t+1}) - \mathcal{L}(\h x^t, \h z^t; \h v^t) \\
&\hspace{5pc} \leq \left( \frac{\sigma_{g} -\rho}{2} \right) \|\h x^{t+1} -\h x^t\|_2^2 + \left( \frac{\lipf - \rho}{2} \right) \|\h z^{t+1} - \h z^t\|_2^2 + \frac{\lipf^2}{\rho} \|\h z^{t} - \h z^{t-1} \|_2^2.
\end{aligned}
\end{align} 
\end{lemma}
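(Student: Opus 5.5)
We follow the three-step decomposition used in the proof of Lemma~\ref{lem:suff_decrease}, splitting the change in $\mathcal{L}$ into an $\h x$-step, a $\h z$-step and a $\h v$-step. The $\h x$-step is literally unchanged, because LADMM uses the same $\h x$-subproblem. Hence \eqref{x_dec} carries over verbatim: $\mathcal{L}(\h x^{t+1},\h z^t;\h v^t)-\mathcal{L}(\h x^t,\h z^t;\h v^t)\le \frac{\sigma_g-\rho}{2}\|\h x^{t+1}-\h x^t\|_2^2$. One caveat: that derivation used convexity of $\widehat g$, i.e.\ $\rho\ge\sigma_g$. The statement of Lemma~\ref{lem:suff_decrease_ladmm} does not list this condition, but the later theorem will impose a stronger $\rho$ condition anyway. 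I would state this reliance explicitly, or else argue via global optimality of $\h x^{t+1}$ together with Proposition~\ref{prop:wc}.

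For the $\h z$-step, the optimality condition of the linearized subproblem, i.e.\ \eqref{ladmm_z_sol}, reads $\nabla f(\h z^t)=\rho(\h x^{t+1}-\h z^{t+1}+\h v^t)$. Convexity of $f$ is no longer the right tool, because the gradient appearing in this condition is at $\h z^t$, not at $\h z^{t+1}$. Instead we use the descent lemma from Assumption~\ref{assume:fgrad_lip}:
\begin{align*}
f(\h z^{t+1})-f(\h z^t)\le \langle \nabla f(\h z^t),\h z^{t+1}-\h z^t\rangle+\frac{\lipf}{2}\|\h z^{t+1}-\h z^t\|_2^2 .
\end{align*}
The quadratic part of $\mathcal{L}$ changes by $\frac{\rho}{2}\langle \h z^{t+1}+\h z^t-2(\h x^{t+1}+\h v^t),\h z^{t+1}-\h z^t\rangle$. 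Substituting $\nabla f(\h z^t)=\rho(\h x^{t+1}-\h z^{t+1}+\h v^t)$, exactly as in \eqref{ineq:z-sub}, the inner products combine to $-\frac{\rho}{2}\|\h z^{t+1}-\h z^t\|_2^2$. The total $\h z$-step change is therefore at most $\frac{\lipf-\rho}{2}\|\h z^{t+1}-\h z^t\|_2^2$.

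For the $\h v$-step, the dual update combined with \eqref{ladmm_z_sol} gives $\h v^{t+1}=\h v^t+\h x^{t+1}-\h z^{t+1}=\nabla f(\h z^t)/\rho$. This is the lagged analogue of \eqref{eq:v_update}. Consequently $\mathcal{L}(\h x^{t+1},\h z^{t+1};\h v^{t+1})-\mathcal{L}(\h x^{t+1},\h z^{t+1};\h v^t)=\rho\langle \h v^{t+1}-\h v^t,\h x^{t+1}-\h z^{t+1}\rangle=\rho\|\h v^{t+1}-\h v^t\|_2^2$.

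The main subtlety is the next step. Since $\h v^t=\nabla f(\h z^{t-1})/\rho$ (valid for $t\ge1$), we get $\rho\|\h v^{t+1}-\h v^t\|_2^2=\frac1\rho\|\nabla f(\h z^t)-\nabla f(\h z^{t-1})\|_2^2\le \frac{\lipf^2}{\rho}\|\h z^t-\h z^{t-1}\|_2^2$. This is exactly where the lagged term in \eqref{lem:suff_decrease_ineq_ladmm} comes from. It requires $t\ge1$, so that $\h z^{t-1}$ exists and the identity for $\h v^t$ holds; this is consistent with the indexing $\{\kappa_t\}_{t=1}^\infty$ used later. Summing the three step estimates then yields \eqref{lem:suff_decrease_ineq_ladmm}.
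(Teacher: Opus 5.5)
Your proposal is correct and follows the paper's proof. You use the same $\h x$/$\h z$/$\h v$ decomposition, reuse \eqref{x_dec}, apply the descent lemma for $f$ in the $\h z$-step, and use $\h v^{t+1}=\nabla f(\h z^t)/\rho$ for the lagged term. In the $\h z$-step you close with the explicit identity from \eqref{ineq:z-sub}, whereas the paper invokes $\rho$-strong convexity of $\widehat f(\cdot;\h z^t)$; this is the same computation, since that function is quadratic.

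Your caveats are well taken. The paper's proof silently relies on $\rho\ge\sigma_g$ (through \eqref{x_dec}) and on $t\ge 1$. Your proposed fix for the former also works: global optimality of $\h x^{t+1}$ already gives $\widehat g'(\h x^{t+1};\cdot)\ge 0$ without convexity of $\widehat g$.
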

\begin{proof}
We start with the $\h z$-subproblem in \eqref{ladmm} by analyzing the following difference 
\begin{align}
    &\mathcal{L}(\h x^{t+1}, \h z^{t+1}; \h v^t) - \mathcal{L}(\h x^{t+1}, \h z^t; \h v^t) \nonumber \\
    =& f(\h z^{t+1}) - f(\h z^t) +\frac{\rho}{2} \Big( \, \|\h x^{t+1} - \h z^{t+1} + \h v^t\|_2^2 -  \|\h x^{t+1} - \h z^t + \h v^t\|_2^2 \, \Big)\nonumber\\
	\leq &  \langle \nabla f(\h z^{t}),  \h z^{t+1} - \h z^{t} \rangle
	+ \displaystyle{\frac{\lipf}{2}} \| \h z^{t+1} - \h z^{t} \|_2^2 +\frac{\rho}{2} \Big( \, \|\h x^{t+1} - \h z^{t+1} + \h v^t\|_2^2 -  \|\h x^{t+1} - \h z^t + \h v^t\|_2^2 \, \Big)  \nonumber \\
    = & \widehat{f}(\h z^{t+1}; \h z^t) - 	\widehat{f}(\h z^{t}; \h z^t) + \displaystyle{\frac{\lipf}{2}} \| \h z^{t+1} - \h z^{t} \|_2^2  , \label{ineq:ladmm-z}
    \end{align}
where we use the definition of $\widehat{f}$ and the Lipschitz Assumption \ref{assume:fgrad_lip} that leads to 
\begin{align} \label{ineq_flip}
f(\h z^{t+1}) \leq f(\h z^{t}) + \langle \nabla f(\h z^{t}), \h z^{t+1} - \h z^{t} \rangle + \displaystyle{\frac{\lipf}{2}} \| \h z^{t+1} - \h z^{t} \|_2^2.
\end{align}
Using the strong convexity of $\widehat{f}(\h z; \, \h z^t)$ with respect to $\h z$ and the optimality condition $\nabla \widehat{f}(\h z^{t+1};\,\h z^t)=\h 0$, we have 
\begin{align} \label{ineq_fhat_stronglyconvx}
\widehat{f}(\h z^t; \, \h z^t) 
&\geq \widehat{f}(\h z^{t+1}; \, \h z^t) + \langle \nabla \widehat{f}(\h z^{t+1}; \, \h z^t), \h z^t - \h z^{t+1} \rangle + \displaystyle{\frac{\rho}{2}} \|  \h z^t - \h z^{t+1} \|_2^2 \nonumber \\
&= \widehat{f}(\h z^{t+1}; \, \h z^t) + \displaystyle{\frac{\rho}{2}} \|  \h z^t - \h z^{t+1} \|_2^2.
\end{align}
It follows from \eqref{ineq:ladmm-z} and \eqref{ineq_fhat_stronglyconvx} that
\begin{equation}\label{ineq:z-sub-ladmm}
    \mathcal{L}(\h x^{t+1}, \h z^{t+1}; \h v^t) - \mathcal{L}(\h x^{t+1}, \h z^t; \h v^t) \leq \frac{\lipf - \rho}{2} \| \z^{t+1} - \z^t \|_2^2.
\end{equation}

Lastly, using the optimality condition of the $\h z$-subproblem
\begin{align} \label{equality:z_opt_ladmm}
\nabla f(\h z^{t}) - \rho ( \h x^{t+1} - \h z^{t+1} + \h v^t) = \h 0,
\end{align}
we have 
\begin{equation}\label{eq:v_update_ladmm}
\h v^{t+1}=\h v^{t}+\h x^{t+1}-\h z^{t+1}=\frac{\nabla f(\h z^{t})}{\rho},
\end{equation}
from which the last term in \eqref{lem:suff_decrease_ineq_ladmm} follows by applying the following inequality
\begin{align} \label{v_dec_ladmm}
\mathcal{L}(\h x^{t+1}, \h z^{t+1}; \h v^{t+1}) - \mathcal{L}(\h x^{t+1}, \h z^{t+1}; \h v^t) \leq \frac{\lipf^2}{\rho}\|\h z^{t} - \h z^{t-1}\|_2^2.
\end{align}
Combining \eqref{x_dec}, \eqref{ineq:z-sub-ladmm}, and \eqref{v_dec_ladmm} completes the proof.
\end{proof}

Unlike ADMM, the descent argument in Lemma~\ref{lem:suff_decrease} cannot be applied directly to LADMM, 
because the $\h v$-update in LADMM involves $\nabla f(\h z^{t})$ in \eqref{eq:v_update_ladmm} rather than $\nabla f(\h z^{t+1})$ in \eqref{eq:v_update}. As a result, an additional lagged-gradient term appears, and the augmented Lagrangian
$\mathcal{L}(\h x^t,\h z^t,\h v^t)$ alone no longer yields a sufficient decrease.
To address this issue, we introduce an auxiliary function:
\begin{equation}\label{def:kappa}
\kappa_t \triangleq \mathcal{L}(\h x^t, \h z^t, \h v^t) + \lipf^2 \| \h z^t - \h z^{t-1} \|_2^2.
\end{equation}
The additional squared-difference term allows us to absorb the lagged-gradient effect and recover a sufficient‑decrease property for LADMM; see Lemma~\ref{lem:kappa_conv}.

\begin{lemma} \label{lem:kappa_conv}
If $\rho > \max \{ \sigma_g, 1 + \lipf, \lipf ( 1 + 2 \lipf) \}$ and Assumptions \ref{assume:lb}--\ref{assume:fgrad_lip} hold, then $\{ \kappa_t \}_{t=1}^\infty$ defined in \eqref{def:kappa} is monotonically decreasing and lower bounded.
\end{lemma}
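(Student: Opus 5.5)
We plan to derive both properties of $\kappa_t$ from Lemma~\ref{lem:suff_decrease_ladmm}: monotonicity by adding the correction term to that inequality, and the lower bound by imitating \eqref{ineq:upperL} with the lagged dual identity \eqref{eq:v_update_ladmm}.

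\emph{Monotone decrease.} By the definition \eqref{def:kappa},
\[
\kappa_{t+1}-\kappa_t = \mathcal{L}(\h x^{t+1},\h z^{t+1};\h v^{t+1})-\mathcal{L}(\h x^{t},\h z^{t};\h v^{t}) + \lipf^2\|\h z^{t+1}-\h z^t\|_2^2 - \lipf^2\|\h z^{t}-\h z^{t-1}\|_2^2 .
\]
Inserting \eqref{lem:suff_decrease_ineq_ladmm}, which requires $\rho\ge\sigma_g$ so that \eqref{x_dec} applies, we obtain
\[
\kappa_{t+1}-\kappa_t \le \frac{\sigma_g-\rho}{2}\|\h x^{t+1}-\h x^t\|_2^2 + \Big(\frac{\lipf-\rho}{2}+\lipf^2\Big)\|\h z^{t+1}-\h z^t\|_2^2 + \Big(\frac{\lipf^2}{\rho}-\lipf^2\Big)\|\h z^{t}-\h z^{t-1}\|_2^2 .
\]
Each coefficient is then checked against the hypotheses on $\rho$.
\begin{itemize}
\item The first coefficient is negative because $\rho>\sigma_g$.
\item The second is negative exactly when $\rho>\lipf+2\lipf^2=\lipf(1+2\lipf)$.
\item The third is $\lipf^2(1/\rho-1)$, which is nonpositive because $\rho>1+\lipf>1$.
\end{itemize}
Hence $\kappa_{t+1}\le\kappa_t$. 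In fact we get a sufficient decrease in $\|\h x^{t+1}-\h x^t\|_2^2$ and $\|\h z^{t+1}-\h z^t\|_2^2$, which we will need afterwards for the vanishing successive differences. (I expect the stated thresholds to be somewhat generous; the condition $1+\lipf$ is what makes $1/\rho<1$.)

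\emph{Lower bound.} The correction term is nonnegative, so it suffices to bound $\mathcal{L}(\h x^t,\h z^t;\h v^t)$ from below. By \eqref{eq:v_update_ladmm}, $\rho\h v^{t}=\nabla f(\h z^{t-1})$ rather than $\nabla f(\h z^t)$, so we follow \eqref{ineq:upperL} with one extra step. Use the descent lemma $f(\h z^t)\ge f(\h x^t)-\langle\nabla f(\h z^t),\h x^t-\h z^t\rangle-\frac{\lipf}{2}\|\h x^t-\h z^t\|_2^2$, and combine it with the inner product $\langle \nabla f(\h z^{t-1}),\h x^t-\h z^t\rangle$. This leaves a cross term $\langle \nabla f(\h z^{t-1})-\nabla f(\h z^t),\h x^t-\h z^t\rangle$. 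Young's inequality and Assumption~\ref{assume:fgrad_lip} bound it below by
\[
-\tfrac12\lipf^2\|\h z^t-\h z^{t-1}\|_2^2-\tfrac12\|\h x^t-\h z^t\|_2^2 .
\]
Therefore
\[
\kappa_t \ge \zeta(\h x^t) + \frac{\rho-\lipf-1}{2}\|\h x^t-\h z^t\|_2^2 + \frac{\lipf^2}{2}\|\h z^t-\h z^{t-1}\|_2^2 \ge \inf\zeta,
\]
using $\rho>1+\lipf$ and Assumption~\ref{assume:lb}. This also explains the appearance of the threshold $1+\lipf$.

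The main obstacle is this lower bound, specifically handling the lagged-gradient cross term. Its split has to be chosen so that the $\|\h z^t-\h z^{t-1}\|_2^2$ part is absorbed by the correction $\lipf^2\|\h z^t-\h z^{t-1}\|_2^2$ already built into $\kappa_t$, while the $\|\h x^t-\h z^t\|_2^2$ part is absorbed by the penalty term. If the constants do not fit, I would rebalance the Young weights, for instance using $\epsilon$ and $1/\epsilon$, within the stated margin on $\rho$.
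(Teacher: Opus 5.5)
Your proposal is correct and follows essentially the same route as the paper. For monotonicity, you add the correction term to \eqref{lem:suff_decrease_ineq_ladmm} and obtain the coefficients $\frac{\sigma_g-\rho}{2}$, $\frac{2\lipf^2+\lipf-\rho}{2}$ and $\frac{(1-\rho)\lipf^2}{\rho}$. For the lower bound, you split $\rho\h v^t=\nabla f(\h z^{t-1})$ into $\nabla f(\h z^{t-1})-\nabla f(\h z^t)$ plus $\nabla f(\h z^t)$, then use Young's inequality with unit weights and the descent lemma to reach exactly \eqref{ineq:kappa_lowerbound}.

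The only slip is expository. Your remark that it ``suffices to bound $\mathcal{L}$ from below'' is not what you actually do, since the $-\frac{\lipf^2}{2}\|\h z^t-\h z^{t-1}\|_2^2$ term genuinely needs the correction built into $\kappa_t$; your final bound on $\kappa_t$ nevertheless handles this correctly. Your observation that \eqref{x_dec} needs $\rho\ge\sigma_g$ is also correct; the paper leaves this implicit in Lemma~\ref{lem:suff_decrease_ladmm}.
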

\begin{proof}
Using the definition of $\kappa_t$ in \eqref{def:kappa} together with Lemma~\ref{lem:suff_decrease_ladmm}, we obtain 
\begin{align} \label{ineq:kappa_dec}
&\kappa_{t+1} - \kappa_{t} \nonumber \\
&\hspace{1pc} = \mathcal{L}(\h x^{t+1}, \h z^{t+1}, \h v^{t+1}) - \mathcal{L}(\h x^t, \h z^t, \h v^t) + \lipf^2 \| \h z^{t+1} - \h z^{t} \|_2^2 - \lipf^2 \| \h z^t - \h z^{t-1} \|_2^2  \nonumber \\
&\hspace{1pc} \leq \left( \frac{\sigma_{g} -\rho}{2} \right) \|\h x^{t+1} -\h x^t\|_2^2 + \left( \frac{2\lipf^2 + \lipf - \rho}{2} \right) \|\h z^{t+1} - \h z^t\|_2^2 \\
& \qquad \qquad+ \left( \frac{(1 - \rho) \lipf^2}{\rho} \right) \|\h z^{t} - \h z^{t-1} \|_2^2,  
\end{align}
 where the right side must be nonpositive due to the choices of parameters. This shows that $\{ \kappa_t \}_{t=1}^\infty$  is monotonically decreasing. Next, we show that $\{ \kappa_t \}$ is lower bounded. To do so, we first observe 
\begin{align*} 
\rho \langle \h v^t, \h x^t - \h z^t \rangle &= \langle \nabla f(\h z^{t-1}), \h x^t - \h z^t \rangle \text{ by } \eqref{eq:v_update_ladmm} \nonumber \\ 
&= \langle \nabla f(\h z^{t-1}) - \nabla f(\h z^t), \h x^t - \h z^t \rangle + \langle \nabla f(\h z^t), \h x^t - \h z^t \rangle \nonumber  \\
&\geq -\frac{1}{2} \Big( \| \nabla f(\h z^{t-1}) - \nabla f(\h z^t) \|_2^2 + \| \h x^t - \h z^t \|_2^2 \Big) + \langle \nabla f(\h z^t), \h x^t - \h z^t \rangle \nonumber  \\
&\geq -\frac{\lipf^2}{2} \| \h z^t - \h z^{t-1} \|_2^2 -\frac{1}{2} \| \h x^t - \h z^t \|_2^2 + f(\h x^t ) - f(\h z^t) - \frac{\lipf}{2} \| \h x^t - \h z^t \|_2^2, \nonumber
\end{align*} 
where the last inequality is by \eqref{ineq_flip}. Combining the above with the definition of $\kappa_t$ yields
\begin{align} \label{ineq:kappa_lowerbound}
\kappa_t 
&= f(\h z^t) + g(\h x^t) + \rho \langle \h v^t, \h x^t- \h z^t \rangle + \frac{\rho}{2} \| \h x^t - \h z^t \|_2^2 + \lipf^2 \| \h z^t - \h z^{t-1} \|_2^2 \nonumber \\
&\geq f(\h x^t ) + g(\h x^t) + \frac{\lipf^2}{2} \| \h z^t - \h z^{t-1} \|_2^2 + \left( \frac{\rho - 1 - \lipf}{2} \right) \| \h x^t - \h z^t \|_2^2. 
\end{align}
Due to Assumption \ref{assume:lb}, the sum $f(\h x^t) + g(\h x^t)$ is lower bounded,  which implies $\kappa_t$ is lower bounded with the choice of parameters. This concludes the proof.
\end{proof}

Note that the condition $\rho > \max \{ \sigma_g, 1 + \lipf, \lipf ( 1 + 2 \lipf) \}$ is stronger than the ADMM condition $\rho > \max\{\sigma_g,\sqrt{2}\lipf\}$, reflecting the additional control needed to compensate for the lagged-gradient term introduced by linearization. In practice, we can estimate a lower bound of $\rho$ based on the explicit forms of both $f$ and $g$; see the experimental section and Supplementary Material for more details. 

\begin{theorem} \label{thm:linearADMM_conv}
If $\rho > \max \{ \sigma_g, 1 + \lipf, \lipf ( 1 + 2 \lipf) \}$ and Assumptions \ref{assume:coercive}--\ref{assume:fgrad_lip} hold, then the iterates $\{(\x^{t}, \z^{t}, \h v^{t})\}_{t=1}^{\infty}$ generated by \eqref{ladmm} has a subsequence convergent to a $d$-stationary point of problem \eqref{prob:fplusg}.
\end{theorem}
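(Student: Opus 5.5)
The argument follows the template of Theorem~\ref{thm:admm_convergence}, with the Lyapunov sequence $\{\kappa_t\}$ of Lemma~\ref{lem:kappa_conv} taking the place of the augmented Lagrangian.

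\emph{Step 1: vanishing differences.} Since $\rho > \max\{\sigma_g, 1+\lipf, \lipf(1+2\lipf)\}$, the three coefficients in \eqref{ineq:kappa_dec} are strictly negative. The coefficient $\frac{(1-\rho)\lipf^2}{\rho}$ is negative because $\rho>1$. Telescoping \eqref{ineq:kappa_dec} from $t=1$ to $T$ and using the lower bound on $\kappa_t$ from Lemma~\ref{lem:kappa_conv} shows that $\sum_t \|\h x^{t+1}-\h x^t\|_2^2$ and $\sum_t\|\h z^{t+1}-\h z^t\|_2^2$ are finite. Hence both successive differences tend to zero. By \eqref{eq:v_update_ladmm},
\[
\|\h v^{t+1}-\h v^t\|_2 \le \tfrac{\lipf}{\rho}\|\h z^t-\h z^{t-1}\|_2 \to 0,
\]
and therefore $\h x^{t+1}-\h z^{t+1} = \h v^{t+1}-\h v^t \to \h 0$.

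\emph{Step 2: boundedness.} Monotonicity gives $\kappa_t\le\kappa_1$. Combined with \eqref{ineq:kappa_lowerbound} and $\rho>1+\lipf$, this yields $\zeta(\h x^t)\le \kappa_1$, and also a uniform bound on $\|\h x^t-\h z^t\|_2$ (using Assumption~\ref{assume:lb}). Coercivity (Assumption~\ref{assume:coercive}) then bounds $\{\h x^t\}$, and consequently $\{\h z^t\}$. For $\{\h v^t\}$, we have $\h v^{t+1}=\nabla f(\h z^t)/\rho$, and the Lipschitz continuity of $\nabla f$ on the bounded set containing the $\h z^t$ keeps it bounded. Bolzano--Weierstrass then gives a subsequence $(\h x^{t_j},\h z^{t_j},\h v^{t_j})\to(\h x^*,\h z^*,\h v^*)$. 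Step 1 implies that the shifted subsequences indexed by $t_j+1$ and $t_j-1$ converge to the same limit, and that $\h x^*=\h z^*$. Passing to the limit in $\rho\h v^{t_j+1}=\nabla f(\h z^{t_j})$ gives $\rho\h v^*=\nabla f(\h x^*)$.

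\emph{Step 3: d-stationarity.} This is the step that differs from ADMM, because the $\h z$-update no longer minimizes $\mathcal L$ in $\h z$. So I will not reproduce \eqref{ineq:L_z1}; instead I use the gradient identity directly. The $\h x$-step still gives
\[
g(\h x^{t_j+1})+\tfrac{\rho}{2}\|\h x^{t_j+1}-\h z^{t_j}+\h v^{t_j}\|_2^2 \le g(\h x)+\tfrac{\rho}{2}\|\h x-\h z^{t_j}+\h v^{t_j}\|_2^2 \quad \forall \h x.
\]
Taking $\liminf$ and using lower semicontinuity of $g$ yields \eqref{ineq:L_x1}:
\[
g(\h x^*)+\tfrac\rho2\|\h v^*\|_2^2\le g(\h x)+\tfrac\rho2\|\h x-\h x^*+\h v^*\|_2^2 .
\]
There is a subtlety here: the left side needs $g(\h x^{t_j+1})\to g(\h x^*)$, or at least $\limsup \le$. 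I obtain this by plugging $\h x=\h x^*$ into the same inequality, which gives $\limsup_j g(\h x^{t_j+1})\le g(\h x^*)$. Then $\h x^*$ minimizes the function $\h x\mapsto g(\h x)+\frac\rho2\|\h x-\h x^*+\h v^*\|_2^2$. This function is directionally differentiable, so
\[
g'(\h x^*;\h d)+\rho\langle \h v^*,\h d\rangle\ge0 \quad\text{for all } \h d .
\]
Substituting $\rho\h v^*=\nabla f(\h x^*)$ gives
\[
\zeta'(\h x^*;\h d)=\nabla f(\h x^*)^\top\h d+g'(\h x^*;\h d)\ge0 ,
\]
which is d-stationarity in the sense of Definition~\ref{def:Dstat}. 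The main obstacle is precisely this step of replacing the lost $\h z$-optimality by the limiting relation $\rho\h v^*=\nabla f(\h x^*)$, together with the careful limit passage in $g$.
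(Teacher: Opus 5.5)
Your proof is correct. Steps 1--2 match the paper: telescoping \eqref{ineq:kappa_dec}, then \eqref{ineq:kappa_lowerbound} with coercivity, then the Lipschitz bound on $\h v^{t+1}=\nabla f(\h z^t)/\rho$. Step 3 takes a genuinely different route.

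The paper says the rest is ``verbatim'' from Theorem~\ref{thm:admm_convergence}. That means passing to the limit in both subproblem optimality inequalities to get \eqref{ineq:L_x1} and \eqref{ineq:L_z1}. Adding them with $\h z=\h x$ gives the global proximal inequality $\zeta(\h x^*)\le\zeta(\h x)+\rho\|\h x-\h x^*\|_2^2$. D-stationarity is then read off from the equality of the directional derivatives of $\zeta$ and $\widehat\zeta$ at $\h x^*$.

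As you noticed, $\mathcal{L}(\h x^{t_j+1},\h z^{t_j+1};\h v^{t_j})\le\mathcal{L}(\h x^{t_j+1},\h z;\h v^{t_j})$ for all $\h z$ is false for the linearized $\h z$-step. So the paper's route silently needs an extra observation. Since $\rho\h v^*=\nabla f(\h z^*)$ and $\h x^*=\h z^*$, the point $\h z^*$ is a stationary point, and hence (by convexity of $f$) a global minimizer, of $\h z\mapsto f(\h z)+\frac{\rho}{2}\|\h x^*-\h z+\h v^*\|_2^2$. That is exactly \eqref{ineq:L_z1} in the limit.

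Your argument avoids \eqref{ineq:L_z1} altogether. You keep only the $\h x$-step limit \eqref{ineq:L_x1}, take its first-order condition $g'(\h x^*;\h d)+\rho\langle \h v^*,\h d\rangle\ge 0$, and substitute the limiting dual identity. This is shorter, and it isolates the only thing the $\h z$-step must deliver, namely $\rho\h v^*=\nabla f(\h x^*)$. The same ending therefore works unchanged for both ADMM (where $\rho\h v^{t+1}=\nabla f(\h z^{t+1})$) and LADMM. The paper's route works purely with function values, which is natural when both subproblems are solved exactly. For LADMM, however, it goes through only after the $\h z$-subproblem's minimizing property is recovered as above.

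One minor remark: the ``subtlety'' you flag points the wrong way. To pass to the limit on the left of the $\h x$-step inequality you need $g(\h x^*)\le\liminf_j g(\h x^{t_j+1})$, which lower semicontinuity gives directly. In fact $g$, being finite-valued and weakly convex, is continuous. Your $\limsup$ argument is harmless but unnecessary.
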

\begin{proof}
The proof is similar to that of Theorem \ref{thm:admm_convergence}. 
The only difference is that we use the sequence $\{\kappa_t\}_{t\ge 1}$ in place of
$\{\mathcal{L}(\h x^t,\h z^t;\h v^t)\}_{t\ge 0}$.
In particular, by telescoping summation of the inequality \eqref{ineq:kappa_dec} from $t=1$ to an arbitrary integer $T$ leads to 
\begin{align} \label{kappa_telescope}
\kappa_{T+1} - \kappa_{1} \leq &
\left( \frac{\sigma_{g} -\rho}{2} \right) \sum \limits_{t=1}^T \|\h x^{t+1} -\h x^t\|_2^2 \nonumber 
 + \left( \frac{2\lipf^2 + \lipf - \rho}{2} \right)  \sum \limits_{t=1}^T \|\h z^{t+1} - \h z^t\|_2^2 \\
 &\qquad \qquad + \left( \frac{\lipf^2 - \rho \lipf^2}{\rho} \right)  \sum \limits_{t=1}^T \|\h z^{t} - \h z^{t-1} \|_2^2,
\end{align}
which means that $\{ \kappa_t \}_{t=1}^{\infty}$ is upper bounded by $\kappa_1$. Therefore, with \eqref{ineq:kappa_lowerbound}, the sequence $\{ f(\h x^t) + g (\h x^t) \}_{t=1}^\infty$ is upper bounded. The coerciveness of $\zeta(\h x)$ in Assumption \ref{assume:coercive} guarantees that $\{ \h x^t \}_{t=1}^\infty$ is bounded, and so is $ \{ \h z^t \}_{t=1}^\infty$ by \eqref{ineq:kappa_lowerbound}. To show the boundedness of $\{\h v^t\}_{t = 1}^\infty$, we use \eqref{eq:v_update_ladmm} to obtain
\begin{align*}
\|\h v^{T+1} - \h v^2 \|_2 \leq \frac{\lipf}{\rho}\|\h z^{T} - \h z^1 \|_2, 
\end{align*}
leading to
\begin{equation*}
   \|\h v^{T+1}\|_2 \leq \|\h v^2\|_2 + \frac{\lipf}{\rho} (\|\h z^{T}\|_2+\|\h z^1\|_2),  \; \forall \, T = 1,2,\cdots.
\end{equation*}
The boundedness of  $\{\h z^t\}_{t = 1}^\infty$ guarantees the boundedness of $\{\h v^t\}_{t = 1}^\infty$. The remainder of the proof follows verbatim from that of Theorem~\ref{thm:admm_convergence}.
\end{proof}

\begin{remark}
The convergence results in Theorems~\ref{thm:admm_convergence} and~\ref{thm:linearADMM_conv}
establish subsequential convergence of the generated iterates.  Establishing an analogous full-sequence
convergence result for the present weakly convex ADMM/LADMM framework would
require additional assumptions such as the Kurdyka--{\L}ojasiewicz (KL) property \citep{AttouchBolteSvaiter2013} beyond those used here.
We therefore leave this extension for future work.
\end{remark}

\begin{remark}
The lower bounds on the penalty parameter $\rho$ in Theorems \ref{thm:admm_convergence} and \ref{thm:linearADMM_conv} are sufficient
conditions for convergence. These bounds depend on the weak convexity modulus
$\sigma_g$ and the Lipschitz constant $\lipf$, which can be estimated from
the analytical form of the objective function. In particular, the weak convexity modulus $\sigma_g$ is obtained from the regularizer or nonconvex component, while the Lipschitz constant $\lipf$ is estimated from the smooth term $f$. The corresponding estimates used in the numerical
experiments are reported in the Supplementary Material. We note that these theoretical
bounds may be conservative in practice; and convergence was observed even for values of $\rho$ below the theoretical threshold  in our experiments. Developing adaptive or problem-dependent
strategies for choosing $\rho$ is an important direction for future work.
\end{remark}


\section{Numerical Experiments}
\label{sect:numerical}
In this section, we present numerical experiments to demonstrate the efficiency of both the ADMM and LADMM methods across three case studies. All the numerical experiments were conducted in MATLAB R2021a on a machine equipped with an 11th-Gen Intel Core i7-11370H CPU and 8 GB of memory. The first two involve two-dimensional (2D) smooth functions, where we compare ADMM/LADMM  with gradient descent (GD)  \citep{demyanov1978multistep}, heavy ball (HB) \citep{polyak1964methods}, and Nesterov’s accelerated gradient (NAG) \citep{nesterov1983method}. Specifically,
the update of GD is given by:
\begin{equation}
\begin{cases}
\h p^{t+1} = -\nabla\zeta(\x^t)  \\
\h x^{t+1} = \h x^t + \alpha^{t+1}\h p^{t+1},
\end{cases}
\end{equation}
 where the stepsize $\alpha^{t+1}>0$ is updated iteratively using the Armijo-Goldstein condition \citep{armijo1966minimization}. 
 The iterations of HB method are
\begin{equation}\label{HB}
\begin{cases}
\h p^{t+1} = -\nabla\zeta(\x^t) +\beta^{t+1}\h p^t \\
\h x^{t+1} = \h x^t + \alpha^{t+1}\h p^{t+1},
\end{cases}
\end{equation}
with $\alpha^{t+1}$ adaptively chosen by the Armijo-Goldstein condition and a fixed $\beta^{t+1}=\beta$. We choose the optimal value of $\beta=0.1$ among $\{10^{-3}, 10^{-2}, 10^{-1}, 1, 10^1\}$ that gives the lowest relative errors to the ground-truth. Starting with $q^0=1$ and an initial point $\h x^0$, the NAG method is formulated as:
\begin{equation}\label{NAG}
\begin{cases}
q^{t+1} = \frac{1+\sqrt{4(q^t)^2+1}}{2} \\
\h p^{t+1} = -\nabla\zeta(\x^t)  \\
\h y^{t+1} = \h x^t + \alpha^{t+1}\h p^{t+1}\\
\h x^{t+1} = \h y^{t+1} + \frac{q^t-1}{q^{t+1}}(\h y^{t+1}-\h y^t).
\end{cases}
\end{equation}
 Similar to other gradient-based methods, the step size $\alpha^{t+1}$ in NAG is updated adaptively by the Armijo-Goldstein condition. 

Lastly, we investigate a sparse logistic regression problem in high dimensions in Section \ref{sect:log-reg}. Since the objective function is nondifferentiable, the gradient-based methods described above are not directly applicable; Instead,
in addition to ADMM and LADMM, we compare with several representative baselines, including a
majorization-minimization (MM) approach \citep{hunter2004tutorial}, proximal
linearized ADMM (PLADMM) \citep{yashtini2022convergence}, a proximal alternating linearized minimization (PALM) method \citep{bolte2014proximal}, and a standard DC algorithm
(DCA) \citep{PhamDinhLeThi1997}.

For subproblems without a closed-form solution, we use iterative inner solvers.
In the two low-dimensional test functions (Sections~\ref{3-hump} and~\ref{sec:sixhump}), the $\h x$-subproblem of ADMM is solved by gradient descent, and terminated when the gradient norm falls below $10^{-2}$ or after 8000 iterations. In the logistic regression experiment (Section \ref{sect:log-reg}), the $\h z$-subproblem of ADMM is solved by Newton's method, and terminated when $
\|\h z_{\text{new}}-\h z_{\text{old}}\|<10^{-3},
$
or after 100 iterations. Additional implementation details are provided in the Supplementary Material.

\begin{remark}
We emphasize that two different LADMM variants are used in this paper, only one of which is covered by the convergence theory in Section~\ref{sect:DC-ADMM}.
For the two low-dimensional test functions (Sections~\ref{3-hump} and~\ref{sec:sixhump}), the $\h z$-subproblem in ADMM admits a closed-form solution, while the $\h x$-subproblem does not. Accordingly, the LADMM variant implemented for these experiments linearizes $g$ in the $\h x$-subproblem, rather than $f$ in the $\h z$-subproblem as presented in Section~\ref{sec:LADMM}. This is a problem-specific deviation from the analyzed algorithms: the convergence analysis in Section~\ref{sect:DC-ADMM}, which relies on the Lipschitz continuity of $\nabla f$, does not directly apply to this variant; extending the theory to cover linearization of the weakly convex term is left for future work. By contrast, the logistic regression experiment in Section~\ref{sect:log-reg} implements the LADMM scheme exactly as analyzed in Section~\ref{sec:LADMM}, with the linearization applied to the smooth logistic loss $f$ in the $\h z$-subproblem.
\end{remark}

\subsection{Three-Hump Camel Function}\label{3-hump}
We start with a valley-shaped testing function,
 called the Three-Hump Camel Function \citep{molga2005test},  defined by,
\begin{equation}\label{three-hump}
\zeta(x_1,x_2)=2x_1^2-1.05x_1^4+\frac{x_1^6}{6}+x_1x_2+x_2^2.
\end{equation}
This function has a global minimum of $\zeta(\x^*) = 0$ at $\x^* = (0,0)$ and is weakly convex. We set $\rho=100$ for ADMM, which satisfies the sufficient condition in Theorem \ref{thm:admm_convergence}. We use the same parameter for LADMM; although our convergence analysis does not cover the chosen splitting strategy, empirical results indicate that LADMM still converges.
Further details on the weakly convex formulation, the calculation of the relevant constants, and the implementation of ADMM and LADMM for \eqref{three-hump} are provided in Supplementary Material (Appendix A). 

All competing methods start from the same initial point, chosen randomly from $[-2,2]\times[-2,2]$. The stopping criteria are identical across methods: the algorithm terminates when the gradient norm satisfies $\|\nabla\zeta(\h x)\|_2<10^{-8}$ or when the number of outer iterations reaches 15,000.

 We evaluate performance based on the decay of the objective function defined in \eqref{three-hump} and the decay of the norm of the iterates, i.e., $\|\h x^t\|_2,$ with respect to the elapsed time, measured in \textit{sec}. Since the global optimum is attained at the origin, $\|\h x^t\|_2$ quantifies the error to the optimal solution. Figure \ref{fig:3hump} presents the objective function values and the corresponding errors, demonstrating that LADMM and ADMM achieve the fastest descent while maintaining the highest accuracy among competing methods. 

  \begin{figure}[t]
     \centering 
         \includegraphics[width=0.48\textwidth]{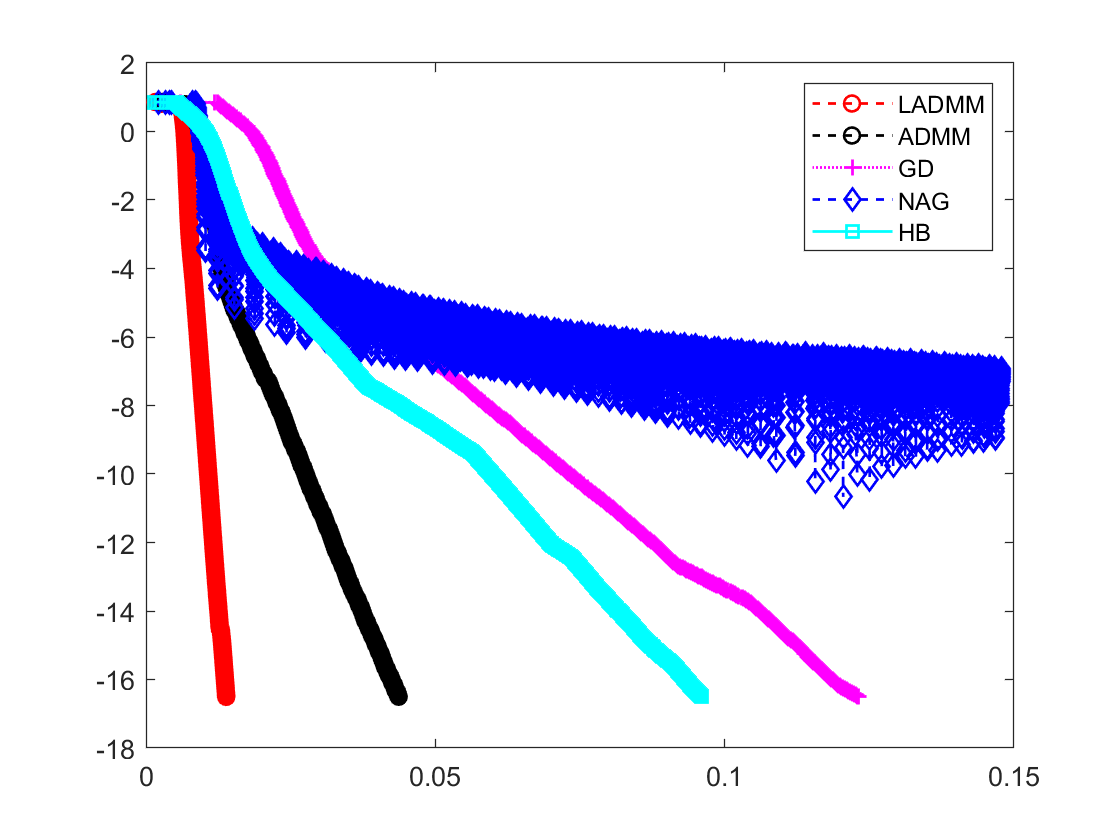}
         \includegraphics[width=0.48\textwidth]{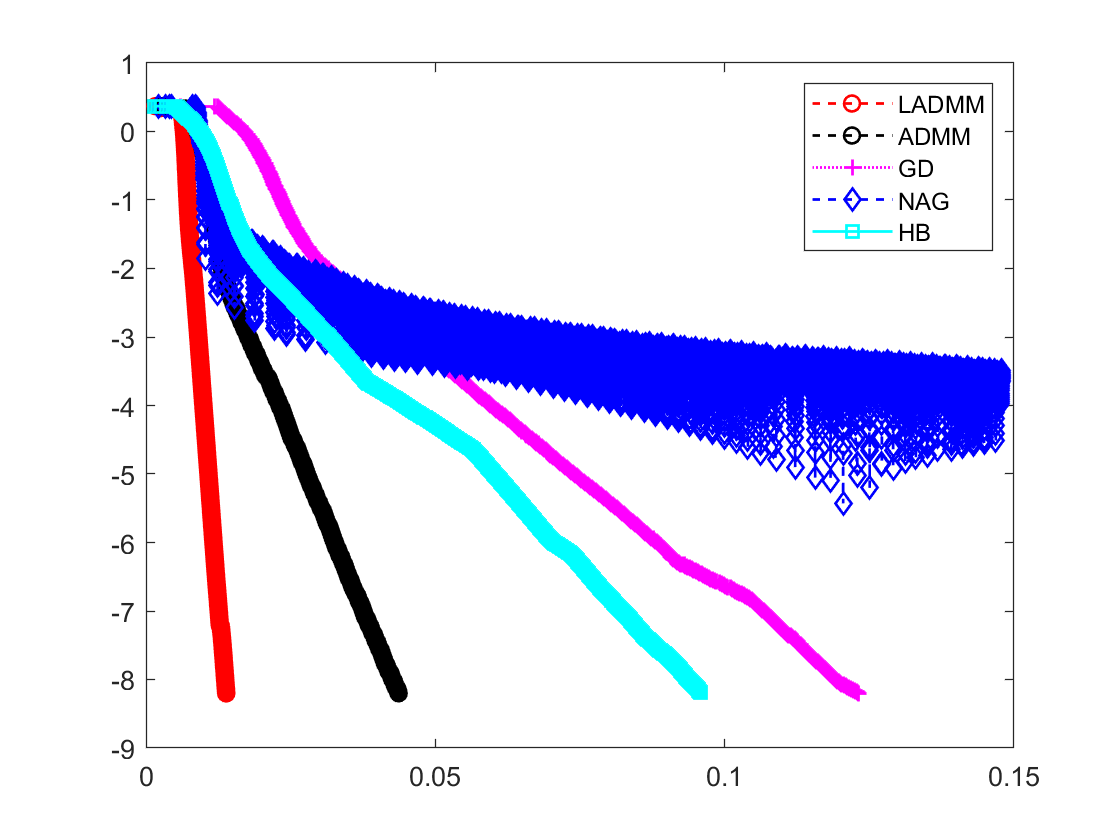}        
    \caption{Comparison of algorithms for minimizing the Three-Hump Camel Function \eqref{three-hump} in terms of the objective function (left) and 
the decay of the estimated $\|\x^t\|_2$ (right) with respect to time (\textit{sec}) on a logarithmic scale. The results are obtained from one random initialization. For ADMM and LADMM, we set $\rho=100$ and terminate the outer iterations when the gradient norm satisfies $\|\nabla\zeta(\h x)\|_2<10^{-8}$ or when the number of outer iterations reaches 15,000. LADMM and ADMM are the fastest among all competing methods.}\label{fig:3hump}
\end{figure}

We further examine computation time, the number of outer iterations, the number of inner iterations (for ADMM only), and the total number of iterations across 100 random initializations. Table \ref{tab:3hump-comparison} reports the mean values over these 100 trials, with standard deviations shown in parentheses. The total number of iterations for ADMM (including both outer and inner iterations) is nearly twice that of GD and HB; nevertheless, ADMM is faster. In particular, LADMM is the fastest method and requires fewer outer iterations than ADMM, reflecting the efficiency of its linearized subproblem updates and the absence of inner solves. NAG consistently reaches the maximum number of iterations before stopping, making it the slowest method. We note that the Armijo-based step-size strategy used for NAG may not be optimal for these nonconvex objectives; a different step-size rule could improve its performance.

\begin{table}
\centering
\small
\caption{Computation time and (inner/outer) iteration numbers for minimizing the Three-Hump Camel Function \eqref{three-hump}, highlighting LADMM and ADMM as the most efficient. Results are reported as the mean over 100 random initializations, with standard deviations shown in parentheses. Other parameter settings are the same as in Figure \ref{fig:3hump}.}

\begin{tabular}{c c c c c}
\hline
\multirow{1}{*}{Method} &  \multicolumn{1}{c}{Time (\textit{sec})} &  \multicolumn{1}{c}{Outer Iters} &  \multicolumn{1}{c}{Inner Iters} &  \multicolumn{1}{c}{Total Iters}\\   
\hline
LADMM  & 0.0086 (0.0017) & 1120.96 (103.89) & N/A& 1120.96 (103.89)\\
ADMM  & 0.0345 (0.0062) &  3417.47 (290.14)  & 17690.41 (2581.94)  & 21107.88 (2783.54)\\
GD  & 0.1005 (0.0174) & 11102.34 (985.41)& N/A               & 11102.34 (985.41)\\
NAG & 0.1305 (0.0216) & 15000 (0)       & N/A               & 15000 (0)\\
HB  & 0.0883 (0.0154) & 9984.02 (892.62)& N/A               & 9984.02 (892.62)\\
\hline
\end{tabular}
\label{tab:3hump-comparison}
\end{table}

To understand why LADMM and ADMM appear faster than other gradient-based methods, we plot the trajectories of ADMM and GD on the contour of the Three-Hump Camel Function, illustrating a case where ADMM outperforms GD in terms of speed. As shown in Figure \ref{fig:traj_3_ADMM_GD},  the trajectories of ADMM and GD are nearly identical; however, in the beginning, the points along ADMM's trajectory are more widely spaced than those of GD. This indicates that while ADMM requires inner iterations to solve the subproblem, resulting in a higher total iteration count, its alternating updates of two variables nonetheless lead to more effective minimization than standard gradient-based approaches that only involve a single variable.

\begin{figure}[t]
     \centering
         \includegraphics[width=0.48\textwidth]{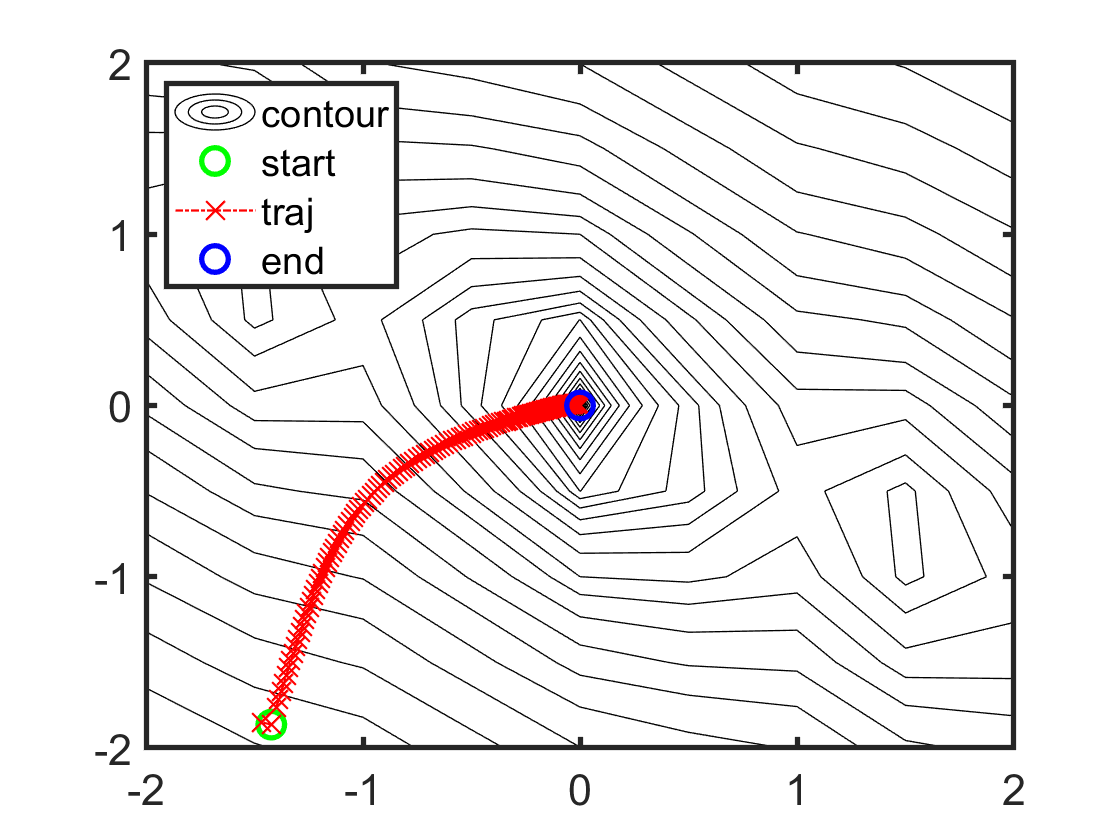}
         \includegraphics[width=0.48\textwidth]{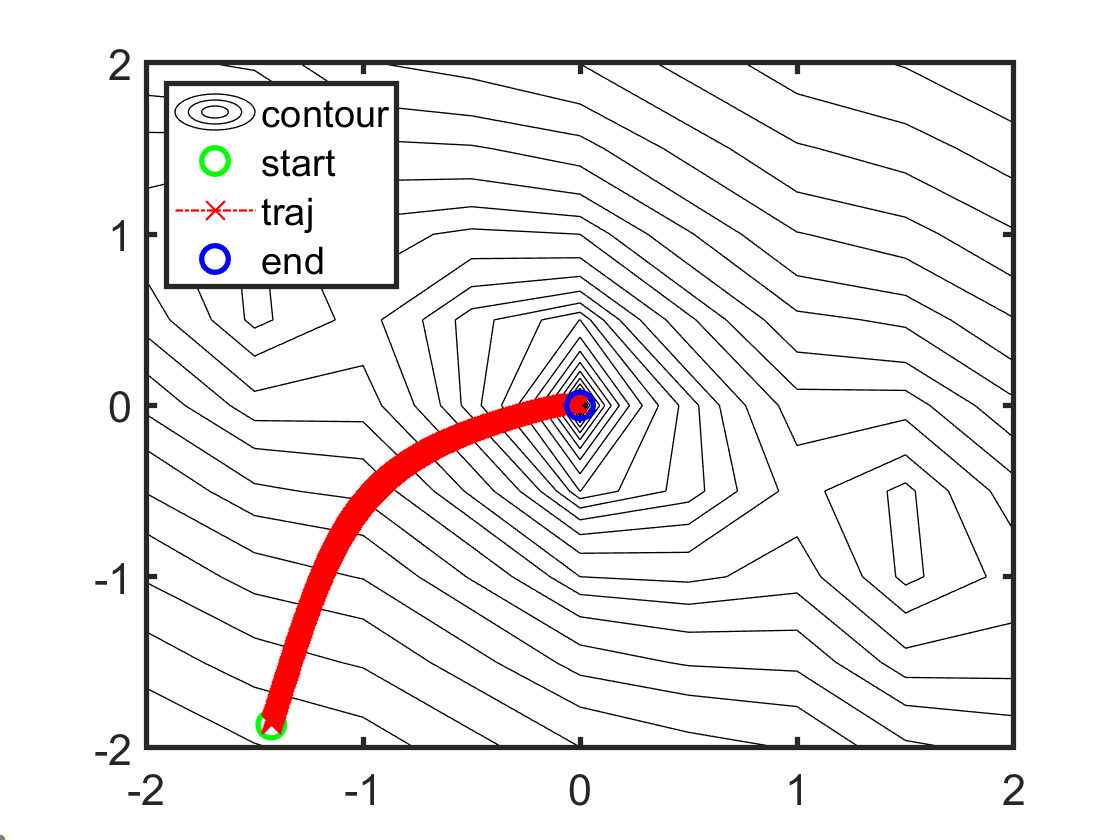} 
    \caption{Example trajectories of ADMM (left) and GD (right) starting from one particular initial point  with the same parameter settings as in Figure \ref{fig:3hump}. In the beginning, the points along ADMM's trajectory are more widely spaced than those of GD, illustrating why ADMM is faster than GD.} \label{fig:traj_3_ADMM_GD}
\end{figure}
 
\subsection{Six-Hump Camel Function}\label{sec:sixhump}

Next, we examine the Six-Hump Camel Function \citep{hedar2007global},  defined by,
\begin{equation}\label{six-hump}
	\zeta(x_1,x_2) = \left(4 - 2.1x_1^2 + \frac{x_1^4}{3} \right)x_1^2 + x_1x_2 + (-4 + 4x_2^2)x_2^2.
\end{equation}
This function possesses two symmetric global minima at approximately $(0.0898, -0.7126)$ and $(-0.0898, 0.7126)$, where the minimum function value is about $-1.0316$. 
The corresponding function $f$ for the the Six-Hump Camel is not globally Lipschitz continuous, and hence our theoretical guidelines for choosing $\rho$ are not applicable. We therefore adopt an empirical choice $\rho=100$ for both ADMM and LADMM. Despite the lack of theoretical guarantees, both methods exhibit stable convergence in our experiments. More details on the ADMM/LADMM formulation of ~\eqref{six-hump}
can be found in 
Supplementary Material (Appendix B). 


Initial points for all methods are randomly sampled from the domain $[-2, 2] \times [-2, 2]$, ensuring a diverse set of starting conditions. The stopping criteria are nearly identical to those in Section \ref{3-hump}, except that the maximum number of outer iterations is set to 1,500 to keep the method difference visible, since using 15,000 iterations (as in Section \ref{3-hump}) would make curves nearly indistinguishable.
These criteria are applied uniformly across all methods.  %

We study the convergence behavior of each method by tracking the decay of the gradient norm, $\|\nabla \zeta(\x^t)\|_2$, and the relative error, defined as $\frac{\|\h x^t - \h x^*\|_2}{\|\h x^*\|_2}$, over time. A gradient norm of $\|\nabla \zeta(\x)\|_2=0$ indicates a stationary point. The relative error quantifies the deviation between the computed solution and the true global minimum. As shown in Figure~\ref{fig:6}, the evolution of both metrics demonstrates that LADMM and ADMM converge more rapidly than the other competing methods.

We conduct a quantitative evaluation of each method's overall performance. The aggregated results, including means and standard deviations, are reported in Table~\ref{tab:6hump-comparison}. Both Tables \ref{tab:3hump-comparison} and \ref{tab:6hump-comparison} further indicate that although ADMM requires more total iterations, it achieves higher computational efficiency in these experiments. LADMM is again the fastest method, requiring fewer outer iterations than ADMM.

Figure \ref{fig:traj_6_ADMM_HB} shows the trajectories of ADMM and HB methods on the contour of the Six-Hump Camel Function, starting from a particular initial point. The widely spaced points along ADMM's trajectory, noted in Figure \ref{fig:traj_6_ADMM_HB}, are even more evident than in Figure \ref{fig:traj_3_ADMM_GD}. Both Figures \ref{fig:traj_3_ADMM_GD} and \ref{fig:traj_6_ADMM_HB} together suggest that ADMM identifies a descent direction more effectively in the early stages than HB.

 \begin{figure}[t]
     \centering
         \includegraphics[width=0.48\textwidth]{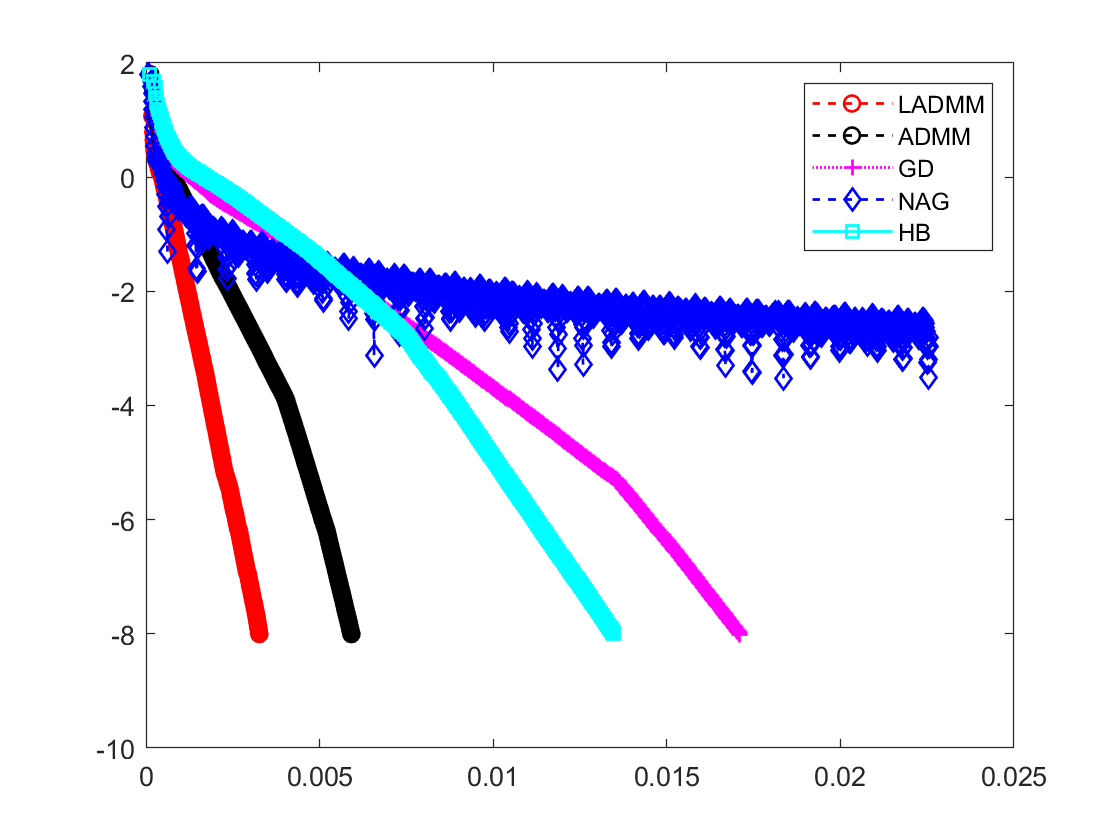}
         \includegraphics[width=0.48\textwidth]{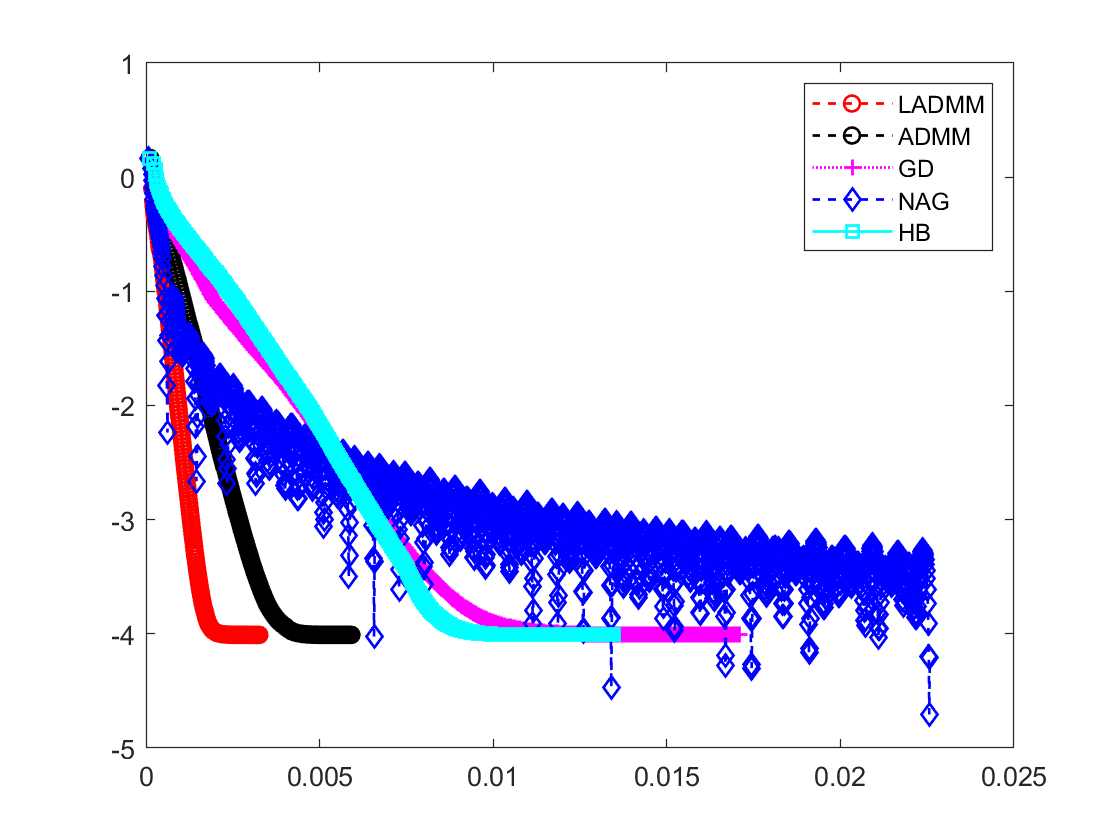}

    \caption{Comparison of algorithms for minimizing the Six-Hump Camel function \eqref{six-hump} in terms of $\|\nabla\zeta(\x)\|_2$ (left) and 
the decay of the relative error (right) with respect to time (\textit{sec}) on a logarithmic scale. The results are obtained from one random initialization. For ADMM and LADMM, we set $\rho=100$ and terminate the outer iterations when the gradient norm satisfies $\|\nabla\zeta(\h x)\|_2<10^{-8}$ or when the number of outer iterations reaches 1,500.  LADMM and ADMM are faster than other competing methods.}
    \label{fig:6}
\end{figure}

\begin{table}[htp]
\centering
\small
\caption{Computation time and (inner/outer) iteration numbers for minimizing the Six-Hump Camel function \eqref{six-hump}, highlighting LADMM and ADMM as the most efficient. Results are reported as the mean over 100 random initializations, with standard deviations shown in parentheses. Other parameter settings are the same as in Figure \ref{fig:6}.}

\begin{tabular}{c c c c c}
\hline
\multirow{1}{*}{Method} &  \multicolumn{1}{c}{Time (\textit{sec})} &  \multicolumn{1}{c}{Outer Iters} &  \multicolumn{1}{c}{Inner Iters} &  \multicolumn{1}{c}{Total Iters}\\   
\hline
LADMM  & 0.0027 (0.0010) & 224.05 (76.66) & N/A& 224.05 (76.66)\\
ADMM  & 0.0039 (0.0014) & 278.47 (91.11)  & 841.04 (290.98)  & 1119.51 (375.77)\\
GD  & 0.0108 (0.0038) & 1080.65 (353.11)& N/A               & 1080.65 (353.11)\\
NAG & 0.0141 (0.0015) & 1500 (0)       & N/A               & 1500 (0)\\
HB  & 0.0094 (0.0031) & 970.53 (318.80)& N/A               & 970.53 (318.80)\\
\hline
\end{tabular}
\label{tab:6hump-comparison}
\end{table}

\begin{figure}
     \centering
         \includegraphics[width=0.48\textwidth]{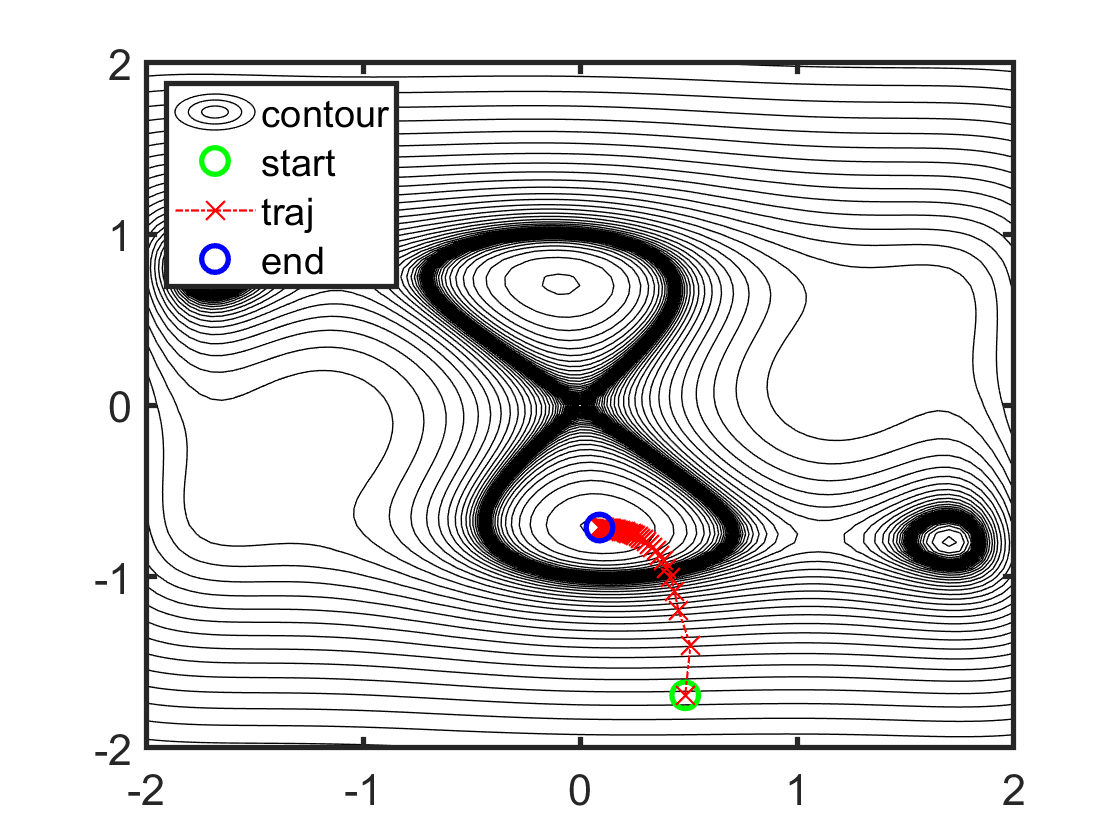}
         \includegraphics[width=0.48\textwidth]{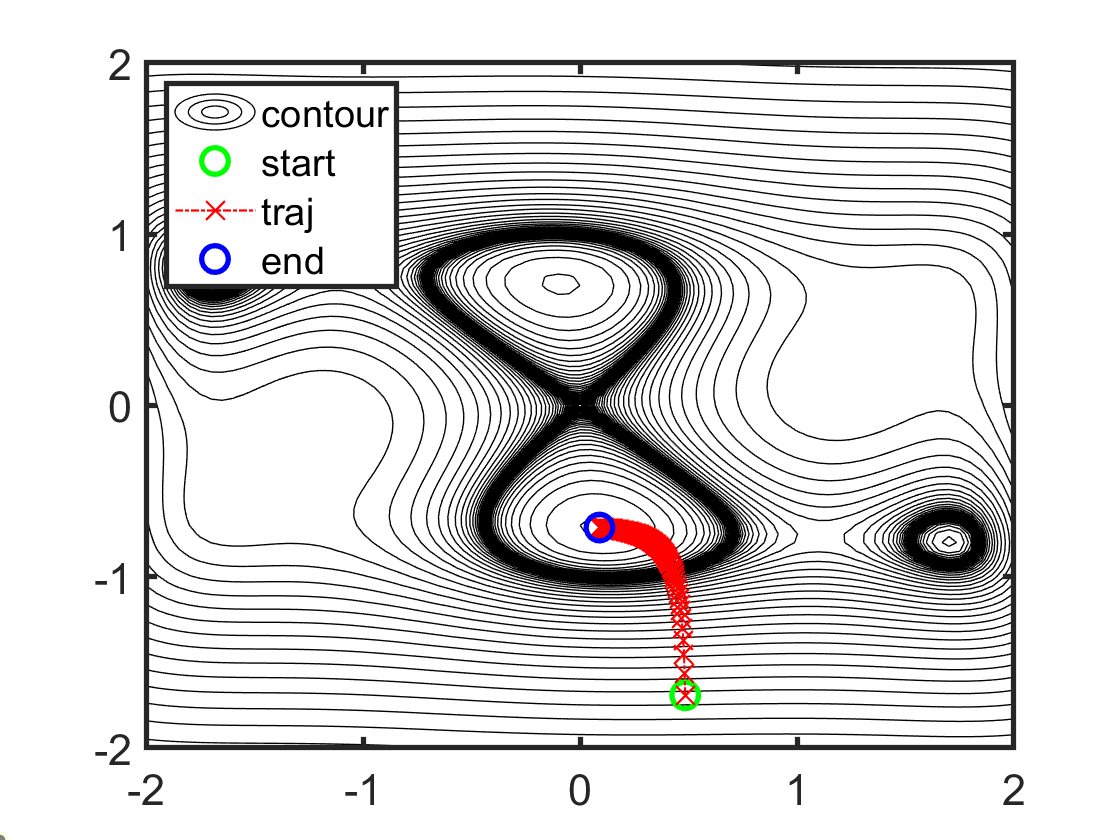} 
    \caption{Example trajectories of ADMM (left) and HB (right) with the same parameter settings as in Figure \ref{fig:6}. In the initial segment, the points along ADMM's trajectory appear more sparsely spaced than those of HB, accounting for its improved efficiency. } \label{fig:traj_6_ADMM_HB}
\end{figure}

\subsection{Logistic Regression with LOG Regularization}\label{sect:log-reg}
 
Logistic regression, which produces both a binary decision and an estimated class probability,  is a widely used classification model in statistics and machine learning, with applications in many areas including medical diagnosis \citep{hua2025clinical} and finance \citep{costaesilva2020logistic}. 
Yet in high-dimensional settings, such as gene expression analysis \citep{doustimousavi2023categorical} and large-vocabulary text classification \citep{chalkidis2020empirical}, the number of features often exceeds the number of samples, where classical logistic regression may become unstable and prone to overfitting.  Enforcing sparsity on the coefficients helps address these challenges, ensuring both statistical robustness and practical interpretability. In this section, we investigate a simulated scenario for sparse logistic regression.

We start by generating a sparse coefficient vector $\h x^*\in\mathbb{R}^n$ with a sparsity level of $10\%$, which serves as the ground-truth. In particular, for each index $j\in\{1, \cdots, n\}$, we include $j$ in the index set $S$ independently with probability 0.1. If $j\in S$, we assign $x_j^*$ a nonzero value drawn from the standard normal distribution; otherwise, we set $x_j^*=0$.
Then we generate an i.i.d. Gaussian matrix
$X_{\text{tr}}\in\mathbb{R}^{m_{\text{tr}}\times n}$ with entries
$\big(X_{\text{tr}}\big)_{ij}\stackrel{\text{iid}}{\sim}\mathcal{N}(0,1)$, where
$m_{\text{tr}}$ denotes the number of training samples and $n$ denotes the number of features. We set $(m_{\text{tr}}, n)=(200,50)$ in our experiment. We standardize the data matrix by subtracting column means. Specifically, we
compute training column means
$\mu=\frac{1}{m_{\text{tr}}}\h 1^\top X_{\text{tr}}\in\mathbb{R}^{1\times n}$ and define 
$ A_{\text{tr}}=X_{\text{tr}}-\h 1\mu$.
Then the binary labels are generated by
$$
b_{\text{tr},i}=\text{sign}\big({\h a}_{\text{tr},i}^\top \h x^*+\eta_{\text{tr},i}\big), \quad i=1,\dots,m_{\text{tr}},
$$
where $\eta_{\text{tr},i}\sim\mathcal{N}(0,\nu^2)$ acts as noise  and $\nu>0$ controls the noise level.
We set its variance $\nu^2=0.1$ in our experiments. Here, the subscript \textit{tr} indicates that the matrix $A_{\text{tr}}$ and vector $\h b_{\text{tr}}$ correspond to the training data.

Given training data $(A_{\text{tr}}, \h b_{\text{tr}}) \in \mathbb{R}^{m_{\text{tr}}\times n}\times\{-1,1\}^{m_{\text{tr}}}$ with rows $\h a_i^\top$ and labels $b_i\in\{-1,1\}$, the logistic regression function 
is defined as
\begin{align}\label{prob:logistic}
L(\h x) = \frac{1}{m_{\text{tr}}}\sum_{i=1}^{m_{\text{tr}}}\log\left(1+\exp{\left(-b_i(\h a_i^\top\h x)\right)}\right),
\end{align}
To mitigate overfitting, sparse logistic regression \citep{liu2009largescale} estimates a coefficient vector $\h x$ by solving
\begin{align} 
\min_{\h x} \ L(\h x) + \lambda R(\h x),
\end{align}
where $R(\h x)$ promotes sparsity and $\lambda > 0$
controls the strength of regularization.
In this work, we adopt a log-composite (LOG) function \citep{ke2021iteratively} defined on a univariable $x\in\mathbb R,$
\begin{align}\label{eq:LOG}
{P}_{\rm{log}}(x;\epsilon) = 
\log (\sqrt{x^2+\epsilon}+|x|),
\end{align}
with  $\epsilon>0$ to enforce sparsity, thus leading to the following 
optimization problem: 
\begin{align} \label{prob:logistic-LOG}
\min_{\h x} \ L(\h x) + \lambda\sum_{j=1}^{n}  {P}_{\rm{log}}(x_j;\epsilon).
\end{align}
Since $L(\cdot)$ is convex and the LOG penalty function is weakly convex, the problem \eqref{prob:logistic-LOG} aligns with our formulation \eqref{prob:fplusg}. 
See Supplementary Material (Appendix C) for more details on the minimization of \eqref{prob:logistic-LOG} using ADMM and LADMM.

We compare the proposed ADMM/LADMM schemes with several representative
baselines, including a
majorization-minimization (MM) method \citep{hunter2004tutorial}, a proximal
linearized ADMM method (PLADMM) \citep{yashtini2022convergence}, a proximal alternating linearized minimization (PALM) method \citep{bolte2014proximal}, and a standard DC algorithm (DCA)
\citep{PhamDinhLeThi1997}. We first describe the MM baseline in detail. In particular, we replace $P_{\rm log}(x_j;\epsilon)$ at each iteration $t$ by its concave tangent at $x_j^t$, thus leading to 
\begin{align}\label{eq:mm-sur}
\h x^{t+1}\in\argmin_{\h x}
L(\h x)+\lambda\sum_{j=1}^n w_j^t\,|x_j|,
\end{align}
where the weights are defined by
$
    w_j^t \;=\; \frac{1}{\sqrt{(x_j^t)^2+\epsilon}}.$ 
The subproblem \eqref{eq:mm-sur} is convex, which can be solved via ADMM.

For the PLADMM baseline, we use the same variable-splitting formulation as in
ADMM/LADMM and add proximal terms to the subproblems; moreover, the smooth term $L(\h x)$ is linearized, leading to a proximal linearized
ADMM update \citep{yashtini2022convergence}. For the PALM baseline, we
apply a PALM scheme \citep{bolte2014proximal} to a quadratic penalty
reformulation of the split problem. For the DCA baseline, we apply a standard DC
algorithm \citep{PhamDinhLeThi1997} based on a difference-of-convex
decomposition of the LOG regularizer, so that each DCA iteration requires the
solution of a convex subproblem.

All algorithms start from the origin and share the same stopping rule:
the algorithm is terminated when $\frac{\|\h x^{t+1}-\h x^t\|_2}{\|\h x^t\|_2} < 10^{-4}$ or when the prescribed maximum number of outer iterations is reached.

We choose the regularization parameter $\lambda$ in  \eqref{prob:logistic-LOG} by $K$-fold (we set $K=5$) cross validation on the training set $ A_{\text{tr}}$. Following a conventional choice of the set \citep{ke2021iteratively}, the candidate set $\{\lambda_i\}_{i=1}^{50}$ is generated by 
\begin{align}
\lambda_i =
\begin{cases}
10^{-5}, & i=1,\\
5\lambda_{i-1}, & 2\le i\le 6,\\
1.1\lambda_{i-1}, & 7\le i\le 44,\\
2\lambda_{i-1}, & 45\le i\le 50,
\end{cases}
\end{align}
which produces $50$ values starting at $10^{-5}$, with coarse growth early, fine spacing in the
middle, and a few larger steps at the end.
We partition the index set $\{1,\dots,m_{\text{tr}}\}$ into $K$ disjoint folds
$\{\mathcal{I}^{(k)}\}_{k=1}^K$ of (approximately) equal size.
For each $k$, define the training indices
$\mathcal{T}^{(k)}=\bigcup_{\ell\neq k}\mathcal{I}^{(\ell)}$ and the validation indices
$\mathcal{V}^{(k)}=\mathcal{I}^{(k)}$.
Given a candidate $\lambda$, let $\widehat{\h x}_{\lambda}^{(k)}$ denote the coefficient estimate obtained by fitting on $\mathcal{T}^{(k)}$. We evaluate the performance of the validation fold using the average logistic regression \citep{hastie2009elements}, i.e., by computing \eqref{prob:logistic} on the set $\mathcal{V}^{(k)}$: 
$$
L_{\text{val}}^{(k)}(\lambda)
=\frac{1}{|\mathcal{V}^{(k)}|}
\sum_{i\in \mathcal{V}^{(k)}}
\log\left(1+\exp\big(-b_{\text{tr},i} \h a_{\text{tr},i}^\top \widehat{\h x}_{\lambda}^{(k)}\big)\right),
$$
and select the optimal value of $\lambda$ that minimizes this loss,
$$
\lambda^* \in \argmin_{\lambda\in\{\lambda_i\}}
\frac{1}{K}\sum_{k=1}^K L_{\text{val}}^{(k)}(\lambda),
$$
then refit the model on the full training set using $\lambda^*$. We observe that the optimal $\lambda$ value is relatively small, so we fix $\rho=1$ and $\epsilon=0.01$ for both ADMM and LADMM in the implementation.

After selecting the optimal $\lambda$, we refit the model on the full training set.
Notably, although our analysis provides a lower bound on $\rho$ as a function of $\lambda$ (see Appendix C), we adopt the simpler choice $\rho = 1$ in all runs. This is motivated by the empirical observation that the optimal $\lambda$ values are relatively small, for which $\rho=1$ is sufficient to ensure stable convergence in practice.

We assess computational efficiency by examining the objective function value \eqref{prob:logistic-LOG} versus elapsed time. 
Figure \ref{fig:LOG} shows that LADMM descends the fastest. The computational advantage of LADMM is consistent with its closed-form $\h z$-update, which avoids the iterative inner solves required by ADMM.

\begin{figure}
     \centering
         \includegraphics[width=0.6\textwidth]{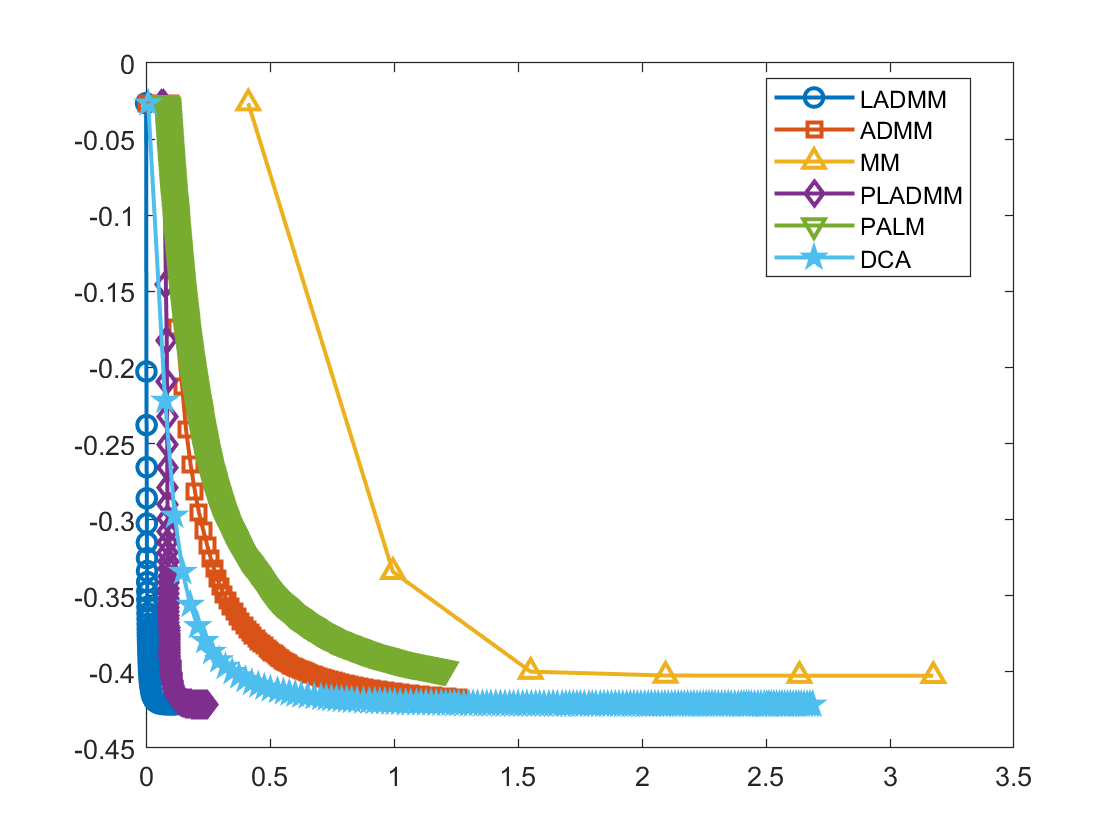} 
    \caption{Comparison of LADMM, ADMM, MM, PLADMM, PALM, and DCA for minimizing the LOG-regularized logistic regression  \eqref{prob:logistic-LOG} in terms of the objective decay with respect to time (\textit{sec}). The reported curves are obtained from one run with $m_{\text{tr}}=200$ training samples  and $n=50$ features. For ADMM and LADMM, we use $\rho=1$, $\lambda=0.0063$, and $\epsilon=0.01$. The algorithm is terminated when $\frac{\|\h x^{t+1}-\h x^t\|_2}{\|\h x^t\|_2} < 10^{-4}$ or when the prescribed maximum number of outer iterations is reached.}
    \label{fig:LOG}
\end{figure}

Then we evaluate the training performance on an independent test dataset. Consider a matrix:
$X_{\text{te}}\in\mathbb{R}^{m_{\text{te}}\times n}$ with entries
$\big(X_{\text{te}}\big)_{ij}\stackrel{\text{iid}}{\sim}\mathcal{N}(0,1)$, where $m_\text{te}$ is  the number of test samples and we set $m_\text{te}=50$.
Using the training means $\mu$, we center the data matrix \citep{kuhn2013applied}  by
$ A_{\text{te}}=X_{\text{te}}-\h 1\mu.
$
Then the true test labels are generated from the ground truth:
$$
b_{\text{te},i}=\text{sign}\big({\h a}_{\text{te},i}^\top \h x^*\big), \quad i=1,\dots,m_{\text{te}}.
$$

Suppose $\h x$ is an estimated solution, we obtain the predicted labels $\widehat{\h b}$ given by
\begin{align}
\hat b_i=\begin{cases}
+1,& (A_{\text{te}}\h{x})_i\ge 0,\\
-1,& (A_{\text{te}}\h{x})_i<0,
\end{cases}
\end{align}
for $i=1,\dots m_{\text{te}}$. Using the true labels $\h b_{\text{te}}$ and predicted values $\widehat{\h b}$,
we define the confusion matrix, i.e.,
\begin{align}
\begin{aligned}
\text{TP} &= |\{i:\hat b_i=+1, b_{\text{te},i}=+1\}|, &
\text{FP} &= |\{i: \hat b_i=+1, b_{\text{te},i}=-1\,\}|,\\
\text{TN} &= |\{i: \hat b_i=-1, b_{\text{te},i}=-1\}|, &
\text{FN} &= |\{i: \hat b_i=-1, b_{\text{te},i}=+1\,\}|,
\end{aligned}
\end{align}
based on which we calculate the following evaluation metrics: 
\begin{align}
\text{Accuracy}=\frac{\text{TP}+\text{TN}}{m_{\text{te}}},\qquad
\text{Precision}=\frac{\text{TP}}{\text{TP}+\text{FP}}\,,
\qquad
\text{Recall}=\frac{\text{TP}}{\text{TP}+\text{FN}}.
\end{align}

To further connect the numerical results with the stationarity guarantees in
Section \ref{Algorithms and Analysis}, we also report a proximal-gradient mapping residual \citep{Nesterov2013GradientMethods} at the final
iterate. For a final iterate $\h x^*$ returned by each method, we define
$$
\text{Res}_{\rm PG}(\h x^*)
=
\frac{1}{\sqrt n}
\left\|
\frac{1}{\eta}
\left[
\h x^*
-
\textbf{prox}_{\log}
\left(
\h x^*-\eta\nabla L(\h x^*);\eta\lambda
\right)
\right]
\right\|_2,
$$
where $\eta$ is the step size and the closed form expression of $\textbf{prox}_{\log}$ is provided in Supplementary Material (Appendix C). This residual measures how much the final iterate would change after one
proximal-gradient step. If the residual is zero, then one proximal-gradient step
leaves the point unchanged. Thus, smaller values of the residual indicate that the final
iterate is closer to satisfying d-stationarity. 

These metrics, together with computation time, and the proximal-gradient mapping
residual are reported in Table \ref{tab:log-comparison} as mean (standard deviation) over
$30$ independent train/test instances. 
The results show that our method is the fastest (in alignment with Figure \ref{fig:LOG}), while still achieving comparable classification performance, with accuracy, precision, and recall all around 0.97 and standard deviations in the range of 0.02 to 0.05. In addition, LADMM
attains a comparable proximal-gradient mapping residual. Among the newly
included baselines, PLADMM performs similarly to LADMM in terms of accuracy,
recall, precision, and Res$_{\rm PG}$, but has a slightly larger computational
time. PALM also remains faster than ADMM and MM in this experiment, although its
stationarity residual is larger. DCA attains slightly higher average
classification metrics, but the differences are small relative to the reported
standard deviations, and its average computational time is higher than that of
LADMM. Taken together, these findings demonstrate that LADMM achieves competitive predictive quality and comparable
stationarity residuals while delivering the lowest computational cost.

\begin{table}
\centering
\small
\caption{Computation time and performance metrics for LOG-regularized logistic regression  \eqref{prob:logistic-LOG} with $m_{\text{tr}}=200$ training samples, $n=50$ features, and $m_{\text{te}}=50$ test samples. Values are mean over 30 random instances, with standard deviations shown in parentheses. For each random instance, the regularization parameter $\lambda$ is selected by $K$-fold (we set $K=5$) cross validation over a prescribed candidate set. Other parameter settings
are the same as in Figure \ref{fig:LOG}. }

\begin{tabular}{c c c c c c }
\hline
\multirow{1}{*}{Method} &  \multicolumn{1}{c}{Time (\textit{sec})} &  
\multicolumn{1}{c}{Accuracy} &  \multicolumn{1}{c}{Recall} & 
\multicolumn{1}{c}{Precision} &  \multicolumn{1}{c}{Res$_{\rm PG}$}\\
\hline
LADMM  & 0.4577 (0.3377)  & 0.9773 (0.0227)& 0.9767 (0.0286) & 0.9772 (0.0295) & $1.8\times10^{-4}$ ($2.5\times10^{-5}$)\\
ADMM  & 1.6781 (0.8687) &  0.9733 (0.0321)   & 0.9721 (0.0361) & 0.9739 (0.0329) & $1.4\times10^{-3}$ ($8.9\times10^{-7}$)\\
MM & 2.9078 (1.7399) & 0.9667 (0.0408)              & 0.9612 (0.0474) & 0.9705 (0.0461) & $4.9\times10^{-3}$ ($2.4\times10^{-4}$)\\
PLADMM & 0.6089 (0.3942) & 0.9773 (0.0227)              & 0.9767 (0.0286) & 0.9772 (0.0295) & $2.6\times10^{-4}$ ($3.2\times10^{-5}$)\\
PALM & 1.1907 (1.0489) & 0.9667 (0.0394)              & 0.9590 (0.0457) & 0.9719 (0.0453) & $5.1\times10^{-3}$ ($1.4\times10^{-4}$)\\
DCA & 1.0601 (0.9279) & 0.9780 (0.0206)              & 0.9771 (0.0250) & 0.9776 (0.0289) & $8.2\times10^{-4}$ ($1.8\times10^{-4}$)\\
\hline
\end{tabular}
\label{tab:log-comparison}
\end{table}


To further examine the computational behavior of the methods as the problem
size increases, we perform a scaling study for
LOG-regularized logistic regression~\eqref{prob:logistic-LOG}. We set
$m=4n$ and test $n\in\{50,100,200,300\}$ and we fix $\rho=1$, $\lambda=0.0063$, and $\epsilon=0.01$.
Table \ref{tab:scale_analysis} reports the average computational time over ten
random instances. The results show that LADMM remains among the fastest methods
over the tested range. These results provide additional evidence of the
computational efficiency of LADMM.



\begin{table}[ht]
\centering
\small
\caption{Scalability comparison for LOG-regularized logistic regression \eqref{prob:logistic-LOG} with $m=4n$ and $n\in\{50,100,200,300\}$. Values are mean over 10 random instances, with standard deviations shown in parentheses. Parameters are fixed at $\rho=1$, $\lambda=0.0063$, and $\epsilon=0.01$. Other implementation settings are the same as in Figure \ref{fig:LOG}.}
{
\begin{tabular}{c c c c c}
\hline
\multirow{1}{*}{Method} 
& \multicolumn{1}{c}{$m=200,n=50$} 
& \multicolumn{1}{c}{$m=400,n=100$} 
& \multicolumn{1}{c}{$m=800,n=200$} 
& \multicolumn{1}{c}{$m=1200,n=300$} \\
\hline
LADMM  
& 0.1504 (0.0578) 
& 0.2905 (0.1109) 
& 0.5176 (0.3246) 
& 0.4622 (0.2723) \\

ADMM   
& 0.8876 (0.3122) 
& 4.6481 (0.7449) 
& 21.2790 (3.3650) 
& 33.6830 (6.2297) \\

MM     
& 2.0555 (0.8609) 
& 14.5740 (3.3491) 
& 114.3100 (25.4160) 
& 669.0600 (1169.8000) \\

PLADMM 
& 0.2125 (0.0829) 
& 0.3270 (0.1160) 
& 0.5562 (0.2171) 
& 0.5753 (0.2588) \\

PALM   
& 0.6044 (0.2329) 
& 1.3479 (0.1361) 
& 4.5345 (1.6150) 
& 7.5386 (2.1225) \\

DCA    
& 0.5177 (0.2151) 
& 1.9846 (0.1657) 
& 8.9419 (2.0311) 
& 18.4960 (3.1139) \\
\hline
\end{tabular}
}
\label{tab:scale_analysis}
\end{table}

\section{Conclusion} \label{conclusion}
We studied an optimization problem involving the sum of two functions \eqref{prob:fplusg}, where one function is convex and differentiable and the other is weakly convex (but not necessarily differentiable). This setting naturally captures a broader family of optimization models that go beyond the convex regime. Numerically, we applied both ADMM and its linearized variant to solve the problem, with the latter reducing the need for nested loops. Theoretically, we showed that both ADMM and LADMM iterates admit a subsequence converging to a directional stationary solution under mild conditions. Both convergence guarantees rely on explicit and verifiable conditions on the penalty parameter: $\rho > \max\{\sigma_g, \sqrt{2}\lipf\}$ for ADMM (Theorem~\ref{thm:admm_convergence}) and $\rho > \max\{\sigma_g, 1+\lipf, \lipf(1+2\lipf)\}$ for LADMM (Theorem~\ref{thm:linearADMM_conv}). The stronger condition for LADMM reflects the additional control needed to absorb the lagged-gradient effect introduced by linearization.
Numerical experiments on two low-dimensional smooth functions and a high-dimensional logistic regression problem demonstrate the practical efficiency of the proposed ADMM and LADMM algorithms compared with baseline methods.
In the LOG-regularized logistic regression setting, LADMM achieved
classification performance and proximal-gradient mapping residuals comparable
to the competing baselines, while
having the lowest average computational time. These results suggest that
LADMM provides an efficient strategy for this weakly convex
regularized logistic-regression model.


Some promising extensions include inexact updates that are attractive for large-scale learning problems, acceleration and preconditioning techniques within the weakly convex ADMM framework, and extending the convergence analysis to cover linearization of the weakly convex term (as employed in Sections~\ref{3-hump} and~\ref{sec:sixhump}). 



\section*{Conflict of Interest Statement}

The authors declare that the research was conducted in the absence of any commercial or financial relationships that could be construed as a potential conflict of interest.

\section*{Author Contributions} 

SM: Methodology, software, formal analysis, writing—original draft, review and editing.

\noindent
CK: Investigation, software, validation, writing—review and editing. 

\noindent
YL: Conceptualization, methodology, formal analysis, supervision, writing—review and editing.

\noindent
MA: Conceptualization, methodology, formal analysis, supervision, writing—review and editing.

\section*{Funding}
Shenghan Mei and Yifei Lou were partially supported by NSF CAREER DMS-2414705, and Chengyu Ke and Miju Ahn by NSF CRII III-1948341.




\bibliographystyle{plainnat}
\bibliography{test}

@Article{article,
	author = "Author1 LastName1 and Author2 LastName2 and Author3 LastName3",
	title = "Article Title",
	volume = "30",
	number = "30",
	pages = "10127-10134",
	year = "2013",
	doi = "10.3389/fnins.2013.12345",
	URL = "http://www.frontiersin.org/Journal/10.3389/fnins.2013.12345/abstract",
	journal = "Frontiers in Neuroscience"
}

@article{gabay1976dual,
  title={A dual algorithm for the solution of nonlinear variational problems via finite element approximation},
  author={Gabay, Daniel and Mercier, Bertrand},
  journal={Computers \& mathematics with applications},
  volume={2},
  number={1},
  pages={17--40},
  year={1976},
  publisher={Elsevier}
}

@Book{bazaraa2006nonlinear,
  author    = {Bazaraa, Mokhtar S. and Sherali, Hanif D. and Shetty, C. M.},
  title     = {Nonlinear Programming: Theory and Algorithms},
  edition   = {3},
  publisher = {John Wiley \& Sons},
  address   = {Hoboken, NJ},
  year      = {2006},
  isbn      = {978-0-471-48600-8}
}

@book{cui2021modern,
  author    = {Cui, Ying and Pang, Jong-Shi},
  title     = {Modern Nonconvex Nondifferentiable Optimization},
  series    = {MOS-SIAM Series on Optimization},
  publisher = {Society for Industrial and Applied Mathematics},
  year      = {2021},
}

@article{joki2018double,
author = {Joki, Kaisa and Bagirov, Adil M. and Karmitsa, Napsu and M\"{a}kel\"{a}, Marko M. and Taheri, Sona},
title = {Double Bundle Method for finding Clarke Stationary Points in Nonsmooth {DC} Programming},
journal = {SIAM Journal on Optimization},
volume = {28},
number = {2},
pages = {1892-1919},
year = {2018},
}

@book{clarke1983optimization,
  author={Clarke, F.H.},
  title={Optimization and Nonsmooth Optimization},
  publisher={John Wiley \& Sons},
  year={1983},
address = {London}
}

@article{pang2017computing,
  title={Computing {B}-stationary points of nonsmooth {DC} programs},
  author={Pang, Jong-Shi and Razaviyayn, Meisam and Alvarado, Alberth},
  journal={Mathematics of Operations Research},
  volume={42},
  number={1},
  pages={95--118},
  year={2017},
  publisher={INFORMS}
}

@article{boyd2011distributed,
  title   = {Distributed Optimization and Statistical Learning via the Alternating Direction Method of Multipliers},
  author  = {Boyd, Stephen and Parikh, Neal and Chu, Eric and Peleato, Borja and Eckstein, Jonathan},
  journal = {Foundations and Trends in Machine Learning},
  volume  = {3},
  number  = {1},
  pages   = {1--122},
  year    = {2011},
  publisher = {Now Publishers},
}

@article{ke2024generalized,
  author  = {Ke, Chengyu and Shin, Sunyoung and Lou, Yifei and Ahn, Miju},
  title   = {A Generalized Formulation for Group Selection via {ADMM}},
  journal = {Journal of Scientific Computing},
  year    = {2024},
  volume  = {100},
  number  = {1},
  pages   = {15}
}

@article{ke2021iteratively,
  title={Iteratively Reweighted Group Lasso Based on Log-Composite Regularization},
  author={Ke, Chengyu and Ahn, Miju and Shin, Sunyoung and Lou, Yifei},
  journal={SIAM Journal on Scientific Computing},
  volume={43},
  number={5},
  pages={S655--S678},
  year={2021},
  publisher={SIAM}
}

@article{vial1983strong,
  title={Strong and weak convexity of sets and functions},
  author={Vial, Jean-Philippe},
  journal={Mathematics of Operations Research},
  volume={8},
  number={2},
  pages={231--259},
  year={1983},
  publisher={INFORMS}
}

@article{demyanov1978multistep,
  title={A multi-step method of generalized gradient descent},
  author={Dem'yanov, V.F.},
  journal={USSR Computational Mathematics and Mathematical Physics},
  volume={18},
  number={5},
  pages={37-44},
  year={1978},
  publisher={Elsevier BV}
}

@book{rockafellar1998variational,
  title={Variational analysis},
  author={Rockafellar, R Tyrrell and Wets, Roger JB},
  year={1998},
  publisher={Springer}
}

@article{polyak1964methods,
  title={Some methods of speeding up the convergence of iteration methods},
  author={B. T. Polyak},
  journal={USSR Computational Mathematics and Mathematical Physics},
  volume={4},
  number={5},
  pages={1-17},
  year={1964},
}

@article{nesterov1983method,
  author  = {Nesterov, Yurii},
  title   = {A method of solving a convex programming problem with convergence rate {$O(1/k^2)$}},
  journal = {Soviet Mathematics Doklady},
  volume  = {27},
  pages   = {372--376},
  year    = {1983}
}

@booklet{molga2005test,
  author = {Molga, Marcin and Smutnicki, Czes{\l}aw},
  title  = {Test Functions for Optimization Needs},
  year   = {2005},
  url    = {https://robertmarks.org/Classes/ENGR5358/Papers/functions.pdf}
}

@booklet{hedar2007global,
  author = {Hedar, Abdel-Rahman},
  title  = {Global Optimization Test Problems},
  year   = {2013},
  url    = {https://www-optima.amp.i.kyoto-u.ac.jp/member/student/hedar/Hedar_files/TestGO.htm}
}

@article{armijo1966minimization,
  title   = {Minimization of functions having {L}ipschitz continuous first partial derivatives},
  author  = {Armijo, Larry},
  journal = {Pacific Journal of Mathematics},
  volume  = {16},
  number  = {1},
  pages   = {1--3},
  year    = {1966}
}

@article{hunter2004tutorial,
  author  = {Hunter, David R. and Lange, Kenneth},
  title   = {A Tutorial on {MM} Algorithms},
  journal = {The American Statistician},
  year    = {2004},
  volume  = {58},
  number  = {1},
  pages   = {30--37},
}

@article{hua2025clinical,
  author  = {Hua, Yuchen and Stead, Thor S. and George, Andrew and Ganti, Latha},
  title   = {Clinical Risk Prediction with Logistic Regression: Best Practices, Validation Techniques, and Applications in Medical Research},
  journal = {Academic Medicine \& Surgery},
  year    = {2025}
}

@article{costaesilva2020logistic,
  author  = {Costa E Silva, E. and Lopes, I. C. and Correia, A. and Faria, S.},
  title   = {A logistic regression model for consumer default risk},
  journal = {Journal of Applied Statistics},
  year    = {2020},
  volume  = {47},
  number  = {13--15},
  pages   = {2879--2894},
}

@article{doustimousavi2023categorical,
  author  = {Dousti Mousavi, N. and Aldirawi, H. and Yang, J.},
  title   = {Categorical Data Analysis for High-Dimensional Sparse Gene Expression Data},
  journal = {BioTech},
  year    = {2023},
  volume  = {12},
  number  = {3},
  pages   = {52},          
}

@article{chalkidis2020empirical,
  author    = {Chalkidis, Ilias and Fergadiotis, Manos and Kotitsas, Sotiris and
               Malakasiotis, Prodromos and Aletras, Nikolaos and Androutsopoulos, Ion},
  title     = {An Empirical Study on Large-Scale Multi-Label Text Classification Including Few and Zero-Shot Labels},
  journal   = {Proceedings of the 2020 Conference on Empirical Methods in Natural Language Processing (EMNLP)},
  year      = {2020},
  pages     = {7503--7515},
  publisher = {Association for Computational Linguistics},
}

@article{liu2009largescale,
  author    = {Liu, Jun and Chen, Jianhui and Ye, Jieping},
  title     = {Large-Scale Sparse Logistic Regression},
  journal   = {Proceedings of the 15th ACM SIGKDD International Conference on Knowledge Discovery and Data Mining},
  year      = {2009},
  pages     = {547--556},
  publisher = {Association for Computing Machinery},
  
}

@article{wang2012linearized,
  title   = {The Linearized Alternating Direction Method of Multipliers for {D}antzig Selector},
  author  = {Wang, Xiangfeng and Yuan, Xiaoming},
  journal = {SIAM Journal on Scientific Computing},
  volume  = {34},
  number  = {5},
  pages   = {A2792--A2811},
  year    = {2012}
}

@book{hastie2009elements,
  author    = {Hastie, Trevor and Tibshirani, Robert and Friedman, Jerome},
  title     = {The Elements of Statistical Learning: Data Mining, Inference, and Prediction},
  edition   = {2},
  year      = {2009},
  publisher = {Springer},
  address   = {New York, NY}
}

@book{kuhn2013applied,
  author    = {Kuhn, Max and Johnson, Kjell},
  title     = {Applied Predictive Modeling},
  year      = {2013},
  publisher = {Springer},
  address   = {New York, NY},
}

@book{boyd2004convex,
  author    = {Boyd, Stephen and Vandenberghe, Lieven},
  title     = {Convex Optimization},
  publisher = {Cambridge University Press},
  year      = {2004},
}

@article{davis2019stochastic,
  title   = {Stochastic Model-Based Minimization of Weakly Convex Functions},
  author  = {Davis, Damek and Drusvyatskiy, Dmitriy},
  journal = {SIAM Journal on Optimization},
  volume  = {29},
  number  = {1},
  pages   = {207--239},
  year    = {2019},
}

@article{shi2012projection,
  author  = {Shi, Baoli and Pang, Zhi-Feng and Yang, Yu-Fei},
  title   = {A projection method based on the splitting {B}regman iteration for the image denoising},
  journal = {Journal of Applied Mathematics and Computing},
  volume  = {39},
  pages   = {533--550},
  year    = {2012},
}

@article{gao2023convergence,
  title   = {Convergence rate analysis of an extrapolated proximal difference-of-convex algorithm},
  author  = {Gao, Lejia and Wen, Bo},
  journal = {Journal of Applied Mathematics and Computing},
  volume  = {69},
  pages   = {1403--1429},
  year    = {2023},
}

@article{liu2021firstorder,
  title   = {First-order Convergence Theory for Weakly-Convex-Weakly-Concave Min-max Problems},
  author  = {Liu, Mingrui and Rafique, Hassan and Lin, Qihang and Yang, Tianbao},
  journal = {Journal of Machine Learning Research},
  volume  = {22},
  number  = {169},
  pages   = {1--34},
  year    = {2021},
}

@article{sun2018alternating,
  author  = {Sun, Tao and Yin, Penghang and Cheng, Lizhi and Jiang, Hao},
  title   = {Alternating direction method of multipliers with difference of convex functions},
  journal = {Advanced Computational Mathematics},
  volume  = {44},
  pages   = {723--744},
  year    = {2018},
  publisher = {Springer}
}

@article{liu2019linearized,
  author  = {Liu, Qinghua and Shen, Xinyue and Gu, Yuantao},
  title   = {Linearized {ADMM} for Nonconvex Nonsmooth Optimization With Convergence Analysis},
  journal = {IEEE Access},
  volume  = {7},
  pages   = {76131--76144},
  year    = {2019},
  publisher = {IEEE}
}

@article{zhang2019fundamental,
  author  = {Zhang, Tao and Shen, Zhengwei},
  title   = {A fundamental proof of convergence of alternating direction method of multipliers for weakly convex optimization},
  journal = {Journal of Inequalities and Applications},
  year    = {2019},
  volume  = {2019},
  number  = {1},
  pages   = {1--21}
}

@article{nikolova2000local,
  author  = {Nikolova, Mila},
  title   = {Local Strong Homogeneity of a Regularized Estimator},
  journal = {SIAM Journal on Applied Mathematics},
  volume  = {61},
  number  = {2},
  pages   = {633--658},
  year    = {2000},
}

@article{lv2009unified,
  author  = {Lv, Jinchi and Fan, Yingying},
  title   = {A Unified Approach to Model Selection and Sparse Recovery Using Regularized Least Squares},
  journal = {The Annals of Statistics},
  volume  = {37},
  number  = {6A},
  pages   = {3498--3528},
  year    = {2009},
}

@article{zhang2017minimization,
  author  = {Zhang, Shuai and Xin, Jack},
  title   = {Minimization of Transformed {$\ell_1$} Penalty: Closed Form Representation and Iterative Thresholding Algorithms},
  journal = {Communications in Mathematical Sciences},
  volume  = {15},
  number  = {2},
  pages   = {511--537},
  year    = {2017},
}

@article{yashtini2022convergence,
  author = {Yashtini, Maryam},
  title = {Convergence and rate analysis of a proximal linearized {ADMM} for nonconvex nonsmooth optimization},
  journal = {Journal of Global Optimization},
  volume = {84},
  number = {4},
  pages = {913--939},
  year = {2022}
}

@article{hong2016convergence,
  author = {Hong, Mingyi and Luo, Zhi-Quan and Razaviyayn, Meisam},
  title = {Convergence analysis of alternating direction method of multipliers for a family of nonconvex problems},
  journal = {SIAM Journal on Optimization},
  volume = {26},
  number = {1},
  pages = {337--364},
  year = {2016}
}

@article{elbourkhissi2025convergence,
  author  = {El Bourkhissi, Lahcen and Necoara, Ion},
  title   = {Convergence rates for an inexact linearized {ADMM} for nonsmooth nonconvex optimization with nonlinear equality constraints},
  journal = {Computational Optimization and Applications},
  volume  = {93},
  number  = {2},
  pages   = {689--727},
  year    = {2026}
}

@article{fan2001variable,
  title={Variable selection via nonconcave penalized likelihood and its oracle properties},
  author={Fan, Jianqing and Li, Runze},
  journal={Journal of the American Statistical Association},
  volume={96},
  number={456},
  pages={1348--1360},
  year={2001}
}

@article{zhang2010nearly,
  title={Nearly unbiased variable selection under minimax concave penalty},
  author={Zhang, Cun-Hui},
  journal={The Annals of Statistics},
  volume={38},
  number={2},
  pages={894--942},
  year={2010}
}

@article{li2015global,
  author    = {Li, Guoyin and Pong, Ting Kei},
  title     = {Global Convergence of Splitting Methods for Nonconvex Composite Optimization},
  journal   = {SIAM Journal on Optimization},
  volume    = {25},
  number    = {4},
  pages     = {2434--2460},
  year      = {2015}
}

@article{bolte2014proximal,
  author    = {Bolte, J\'{e}r\^{o}me and Sabach, Shoham and Teboulle, Marc},
  title     = {Proximal Alternating Linearized Minimization for Nonconvex and Nonsmooth Problems},
  journal   = {Mathematical Programming},
  volume    = {146},
  number    = {1--2},
  pages     = {459--494},
  year      = {2014}
}

@book{nesterov2004introductory,
  author    = {Nesterov, Yurii},
  title     = {Introductory Lectures on Convex Optimization: A Basic Course},
  series    = {Applied Optimization},
  volume    = {87},
  publisher = {Kluwer Academic Publishers},
  year      = {2004}
}

@book{beck2017first,
  author    = {Beck, Amir},
  title     = {First-Order Methods in Optimization},
  series    = {MOS-SIAM Series on Optimization},
  publisher = {Society for Industrial and Applied Mathematics},
  year      = {2017}
}

@article{PhamDinhLeThi1997,
  author  = {{Pham Dinh}, Tao and {Le Thi}, Hoai An},
  title   = {Convex Analysis Approach to D.C. Programming: Theory, Algorithms and Applications},
  journal = {Acta Mathematica Vietnamica},
  volume  = {22},
  number  = {1},
  pages   = {289--355},
  year    = {1997}
}

@article{AttouchBolteSvaiter2013,
  author  = {Attouch, Hedy and Bolte, J{\'e}r{\^o}me and Svaiter, Benar Fux},
  title   = {Convergence of Descent Methods for Semi-algebraic and Tame Problems:
             Proximal Algorithms, Forward--Backward Splitting, and Regularized Gauss--Seidel Methods},
  journal = {Mathematical Programming},
  volume  = {137},
  number  = {1--2},
  pages   = {91--129},
  year    = {2013}
}

@article{Nesterov2013GradientMethods,
  author  = {Nesterov, Yurii},
  title   = {Gradient Methods for Minimizing Composite Functions},
  journal = {Mathematical Programming},
  volume  = {140},
  number  = {1},
  pages   = {125--161},
  year    = {2013}
}

\clearpage

\setcounter{algorithm}{0}
\renewcommand{\thealgorithm}{S\arabic{algorithm}}


\clearpage
\section*{Supplementary Material}
\addcontentsline{toc}{section}{Supplementary Material}

\setcounter{equation}{0}
\renewcommand{\theequation}{S\arabic{equation}}
\setcounter{algorithm}{0}
\setcounter{theorem}{0}
\setcounter{lemma}{0}
\setcounter{corollary}{0}
\setcounter{example}{0}
\setcounter{remark}{0}
\setcounter{definition}{0}

\noindent This Supplementary Material provides additional proofs and algorithmic details that are omitted from the main manuscript. Unless otherwise specified, equations, sections, and theorems (e.g., \eqref{prob:fplusg} or Section \ref{sec:wc}) refer to those in the main manuscript. Equations introduced here are numbered locally such as
\eqref{x_subproblem_wc_3_hump}, and algorithms are likewise numbered locally such as
Algorithm \ref{alg:three-hump-wc}.

\begin{appendices}

\section{Three-Hump Camel Function}\label{three-hump-wc/dc}

\subsection{ADMM for Three-Hump Camel}\label{three-hump-admm}
By setting $f(\h x)=2x_1^2+x_2^2$ and $g(\x)=\frac{x_1^6}{6}+x_1x_2-1.05x_1^4$,  we express the Three-Hump Camel Function \eqref{three-hump} into the general form of \eqref{prob:fplusg}. It is straightforward that $f(\h x)$ is convex. Here we verify that $g(\h x)$ is weakly convex, which requires to finding a constant $\sigma>0$ such that
$
    \hat{g}(\h x) = g(\h x)+\frac{\sigma}{2}\|\h x\|^2$ is convex. We compute the Hessian matrix of $\hat g$
\begin{align}
    \nabla^2 \hat{g}(\h x)=\begin{bmatrix}
        5x_1^4-12.6x_1^2+\sigma & 1 \\ 1 & \sigma
    \end{bmatrix}.
\end{align}
For any vector $[a, b]^T$,  we have
\begin{align*}
    \begin{bmatrix}
        a&b
    \end{bmatrix}\begin{bmatrix}
        5x_1^4-12.6x_1^2+\sigma & 1 \\ 1 & \sigma
    \end{bmatrix}\begin{bmatrix}
        a\\b
    \end{bmatrix}=(5x_1^4-12.6x_1^2+\sigma-1)a^2+(a+b)^2+(\sigma-1)b^2.
\end{align*}
Completing the square for $5x_1^4-12.6x_1^2+\sigma-1=5(x_1^2-1.26)^2+\sigma - 8.938$ shows that the expression is nonnegative for all $x_1$, provided $\sigma\geq 8.938$.
Therefore, $\hat{g}(\h x)$ is convex, and hence $g(\h x)$ is weakly convex.

Next, we estimate the Lipschitz constant of the function
$
f(\x)=2x_1^2+x_2^2.
$
Since
$$
\nabla f(\x)=
\begin{pmatrix}
4x_1\\
2x_2
\end{pmatrix},
\qquad
\nabla^2 f(\x)=
\begin{pmatrix}
4&0\\
0&2
\end{pmatrix},
$$
we have
$
\lipf=\|\nabla^2 f\|_2=4,
$
and hence a sufficient condition in Theorem \ref{thm:admm_convergence} for the convergence guarantee is
$
\rho>\max\{\sigma_g,\sqrt{2}\,\lipf\}
=\max\{8.938,4\sqrt2\}=8.938.$ In the experiments,
We set $\rho=100$, which satisfies this condition. 

For the special forms of $f(\cdot)$ and $g(\cdot)$, we elaborate on how to solve the $\h x$- and $\h z$-subproblems in the ADMM iteration \eqref{admm}.
Specifically, the $\h x$-subproblem in \eqref{admm} is equivalent to
\begin{align}\label{x_subproblem_wc_3_hump}
\h x^{t+1} \in \argmin_{\h x} g(\h x)+ \frac{\rho}{2} \|\h x-\h z^t+ \h v^t\|_2^2 \triangleq G_1(\x),
\end{align}
where
$   G_1(\x) = \frac{x_1^6}{6}+x_1x_2-1.05x_1^4+\frac{\rho}{2} \|\h x-\h z^t+ \h v^t\|_2^2.$
Taking the gradient of $G_1(\x)$ with respect to $\x$, we get
\begin{align}
    \nabla G_1(\x) = \begin{bmatrix}
        x_1^5+x_2-4.2x_1^3 \\x_1 \\
    \end{bmatrix}+ \rho \begin{bmatrix}
        x_1-z_1^t+v_1^t \\ x_2-z_2^t+v_2^t
    \end{bmatrix}, \nonumber
\end{align}
and hence the gradient descent update is given by
\begin{align}\label{3-hump-wc-x}
    \x_{k+1} = \x_k - \tau_1 \nabla G_1(\x_k),
\end{align}
where $\tau_1>0$ is a constant step size and the subscript $k$ denotes the inner iteration number, as opposed to superscript $t$ for outer iterations. 
We repeat the gradient descent iterations in \eqref{3-hump-wc-x} until convergence is achieved, i.e., when $\nabla G_1(\x_k)<10^{-2}$. We then set $\h x^{t+1}$ as the final iterate of $\h x_{k+1}$.

The $\h z$-subproblem in \eqref{admm} is equivalent to
\begin{align}\label{z_subproblem_wc_3_hump}
\h z^{t+1} \in \argmin_{\h z} f(\h z)+ \frac{\rho}{2} \|\h x^{t+1}-\h z+ \h v^t\|_2^2 \triangleq H_1(\z),
\end{align}
where
$ H_1(\z) = 2z_1^2+z_2^2+ \frac{\rho}{2} \|\h x^{t+1}-\h z+ \h v^t\|_2^2.$
By taking the gradient with respect to $\z$, we obtain the optimality condition:
\begin{align}
    \begin{bmatrix}
        4z_1 \\ 2z_2 \\
    \end{bmatrix} + \rho \begin{bmatrix}
        z_1-x_1^{t+1}-v_1^t \\ z_2-x_2^{t+1}-v_2^t \\
    \end{bmatrix} = \begin{bmatrix}
        0 \\0
    \end{bmatrix},\nonumber
\end{align}
which implies a closed-form solution for $\z$ given by
\begin{align}\label{3-hump-wc-z}
    \z^{t+1} = \begin{bmatrix}
        \frac{\rho (x_1^{t+1}+v_1^t)}{\rho+4} \\ \frac{\rho (x_2^{t+1}+v_2^t)}{\rho+2} \\
    \end{bmatrix}.
\end{align}
Under the stopping conditions of a maximum number of iterations and a sufficiently small gradient norm, the pseudocode for solving the Three-Hump Camel Function via ADMM is summarized in Algorithm \ref{alg:three-hump-wc}. 

\begin{algorithm}
\caption{ADMM for minimizing the Three-Hump Camel Function}
\label{alg:three-hump-wc}
\begin{algorithmic}[1]
\State Parameters: $\rho \in \mathbb{R}^+$ and tMAX, kMAX $\in \mathbb{Z}^+$; 
\State Initialize iterates $\h x^0, \h z^0, \h v^0$, and $t=0$; 
\While{$t<\text{tMAX}$ and $\|\nabla\zeta(\h x^t)\|_2>10^{-8}$} 
\State{Initialize iterates $\h x_0=\h x^t$ and $k=0$;}
\While{$k<\text{kMAX}$ and $\|\nabla(G_1(\h x_k)\|_2>10^{-2}$} 
\State{Update $\h x_{k+1}$ via \eqref{3-hump-wc-x};}
\State{$k=k+1$;}
\EndWhile
\State\Return $\h x^{t+1}=\h x_k$
\State{Update $\h z^{t+1}$ via \eqref{3-hump-wc-z};}
\State{$\h v^{t+1} = \h v^t + \h x^{t+1} -\h  z^{t+1}$;} \State{$t=t+1$;}
\EndWhile
\State \Return $\h x^*=\h x^t$
\end{algorithmic}
\end{algorithm}

\subsection{LADMM for Three-Hump Camel}\label{three-hump-ladmm}


We intend to linearize the $\h x$-subproblem in \eqref{x_subproblem_wc_3_hump} by a first-order approximation of $g(\h x)$ at $\h x^t$:
\begin{equation}\label{eq:threehump_linearize_g}
g(\h x)\approx g(\h x^t)+\langle \nabla g(\h x^t),\,\h x-\h x^t\rangle.
\end{equation}
The resulting $\h x$-subproblem becomes
\begin{align}
\h x^{t+1}\in\argmin_{\h x}\,
\langle \nabla g(\h x^t),\,\h x\rangle+\frac{\rho}{2}\|\h x-\h z^t+\h v^t\|_2^2.
\end{align}
Taking the gradient with respect to $\h x$ and setting it to zero yields the optimality condition
\begin{equation}\label{eq:threehump_x_opt}
\nabla g(\h x^t)+\rho\big(\h x^{t+1}-\h z^{t}+\h v^t\big)=\h 0,
\end{equation}
where
$$
\nabla g(\h x^t)=
\begin{bmatrix}
(x_1^t)^5+x_2^t-4.2(x_1^t)^3\\[2pt]
x_1^t
\end{bmatrix}.
$$
Solving~\eqref{eq:threehump_x_opt} gives the closed-form update
\begin{equation}\label{eq:threehump_x_update}
\h x^{t+1}=\h z^{t}-\h v^t-\frac{1}{\rho}\nabla g(\h x^t)
=
\begin{bmatrix}
z_1^{t}-v_1^t-\dfrac{(x_1^t)^5+x_2^t-4.2(x_1^t)^3}{\rho}\\[8pt]
z_2^{t}-v_2^t-\dfrac{x_1^t}{\rho}
\end{bmatrix}.
\end{equation}

With  the $\h z$-update being the same as ADMM, i.e., given by~\eqref{3-hump-wc-z}, the overall procedure is summarized in Algorithm \ref{alg:three-hump-dc}. Compared with Algorithm~\ref{alg:three-hump-wc}, no inner iteration is required. 

\begin{algorithm}[H]
\caption{LADMM for minimizing the Three-Hump Camel Function}
\label{alg:three-hump-dc}
\begin{algorithmic}[1]
\State Parameters: $\rho \in \mathbb{R}^+$ and tMAX $\in \mathbb{Z}^+$; 
\State Initialize iterates $\h x^0, \h z^0, \h v^0$, and $t=0$; 
\While{$t<\text{tMAX}$ and $\|\nabla\zeta(\h x^t)\|_2>10^{-8}$} 

\State{Update $\h x^{t+1}$ via \eqref{eq:threehump_x_update};}
\State{Update $\h z^{t+1}$ via \eqref{3-hump-wc-z};}
\State{$\h v^{t+1} = \h v^t + \h x^{t+1} -\h  z^{t+1}$;} \State{$t=t+1$;}
\EndWhile
\State\Return{$\h x^*=\h x^t$}
\end{algorithmic}
\end{algorithm}

Note that this approach (Algorithm \ref{alg:three-hump-dc}) differs to the LADMM scheme introduced in Section \ref{sec:LADMM}, where the linearization is applied on the smooth term $f$. Since the weakly convex function $g$ does not satisfy the Lipschitz‑gradient assumption, the convergence analysis developed in Section \ref{sect:DC-ADMM} is not applicable here, and we leave this extension for future work.

\section{Six-Hump Camel Function}\label{six-hump-wc/dc}
\subsection{ADMM for Six-Hump Camel}\label{six-hump-admm}
By setting $f(\h x)=4x_1^2+4x_2^4$ and $g(\x)=\frac{x_1^6}{3}+x_1x_2-2.1x_1^4-4x_2^2$, we express the Six-Hump Camel Function \eqref{six-hump} in the form of \eqref{prob:fplusg}. Clearly, $f(\h x)$ is convex. To verify the weak convexity of $g(\h x)$, it suffices to find a constant $\sigma > 0$ such that the function $\hat{g}(\h x) = g(\h x) + \frac{\sigma}{2} \| \h x \|^2$
is convex. To this end, we compute the Hessian of $\hat{g}$, leading to
\begin{align}
    \nabla^2 \hat{g}(\h x) = \begin{bmatrix}
        10x_1^4 - 25.2x_1^2 + \sigma & 1 \\ 1 & -8 + \sigma
    \end{bmatrix}.
\end{align}
For an arbitrary vector $[a, b]^\mathsf{T}$, we evaluate the associated quadratic form:
\begin{align*}
  &  \begin{bmatrix}
        a & b
    \end{bmatrix}
    \begin{bmatrix}
        10x_1^4 - 25.2x_1^2 + \sigma & 1 \\ 1 & -8 + \sigma
    \end{bmatrix}
    \begin{bmatrix}
        a \\ b
    \end{bmatrix}\\
    =& (10x_1^4 - 25.2x_1^2 + \sigma - 1)a^2 + (a + b)^2 + (\sigma - 9)b^2.
\end{align*}
We complete the square for the term $10x_1^4 - 25.2x_1^2 + \sigma - 1=10(x_1^2 - 1.26)^2 + \sigma - 15.876$, which is nonnegative for all $x_1$ provided that $\sigma \geq 15.876$. Consequently, $\hat{g}(\h x)$ is convex, and hence $g(\h x)$ is WC.

Given the specific forms of $f(\cdot)$ and $g(\cdot)$, we outline the solution of the $\h x$- and $\h z$-subproblems in the ADMM iteration \eqref{admm}.
The $\h x$-subproblem in \eqref{admm} is equivalent to
\begin{align}\label{x_subproblem_wc_6_hump}
\h x^{t+1} \in \argmin_{\h x} g(\h x)+ \frac{\rho}{2} \|\h x-\h z^t+ \h v^t\|_2^2 \triangleq G_1(\x).
\end{align}
We solve \eqref{x_subproblem_wc_6_hump} using gradient descent.
Taking the gradient of $G_1(\x)$ with respect to $\x$, we obtain
\begin{align}
    \nabla G_1(\x) = \begin{bmatrix}
        2x_1^5+x_2-8.4x_1^3 \\x_1-8x_2 \\
    \end{bmatrix}+ \rho \begin{bmatrix}
        x_1-z_1^t+v_1^t \\ x_2-z_2^t+v_2^t
    \end{bmatrix}.
\end{align}
Then, the iterative update rule becomes:
\begin{align}\label{6-hump-wc-x}
    \x_{k+1} = \x_k - \tau_1 \nabla G_1(\x_k),
\end{align}
where $\tau_1>0$ denotes the step size and $k$ indexes the inner iteration. We perform the gradient descent iterations in \eqref{6-hump-wc-x} until convergence, which is determined when the norm of the gradient satisfies $\|\nabla G_1(\x_k)\| < 10^{-2}$. The final value of $\h x_{k+1}$ is then assigned to $\h x^{t+1}$.

The $\h z$-subproblem in \eqref{admm} is given by
\begin{align}\label{z_subproblem_wc_6_hump}
\h z^{t+1} \in \argmin_{\h z} f(\h z)+ \frac{\rho}{2} \|\h x^{t+1}-\h z+ \h v^t\|_2^2 \triangleq H_1(\z).
\end{align}
Differentiating $H_1(\z)$ with respect to $\z$ yields the optimality condition: 
\begin{align}
    \begin{bmatrix}
        8z_1 \\ 16z_2^3 \\
    \end{bmatrix} + \rho \begin{bmatrix}
        z_1-x_1^{t+1}-v_1^t \\ z_2-x_2^{t+1}-v_2^t \\
    \end{bmatrix}=\begin{bmatrix}
        0 \\0
    \end{bmatrix},
\end{align}
from which we obtain the closed-form solution for $z_1=\frac{\rho(x_1^{t+1}+v_1^t)}{\rho+8}$. For $z_2$, we arrive at the following cubic equation:
\begin{align}\label{six_hump_wc_z_cubic}
    16z_2^3+\rho z_2-\rho (x_2^{t+1}+v_2^t)=0.
\end{align}
To solve this cubic equation, we analyze the discriminant
$\Delta=-(4p^3+27q^2)$, with $p = \frac{\rho}{16}$ and $q=-\frac{\rho (x_2^{t+1}+v_2^t)}{16}$, which gives rise to
the following three cases: 
\begin{itemize}
    \item Case 1: $\Delta<0$, then the cubic equation has one real root: 
\begin{align}
    z_2 = u_1^{\frac{1}{3}} + u_2^{\frac{1}{3}}, \nonumber
\end{align}
with $u_1 = -\frac{q}{2}+\sqrt{\frac{q^2}{4}+\frac{p^3}{27}}$ and $u_2=-\frac{q}{2}-\sqrt{\frac{q^2}{4}+\frac{p^3}{27}}$. Note that we ignore the two complex roots.
   \item Case 2: $\Delta=0$,  then $4p^3+27q^2=0$. If $p=0$, then both $p=q=0$, and $0$ is a triple root of the cubic equation \eqref{six_hump_wc_z_cubic}. If $p\neq0$, then the cubic equation \eqref{six_hump_wc_z_cubic} has three roots $z_{2,k}$ ($k=0,1,2$): a simple root $z_{2,0}=\frac{3q}{p}$ and a double root $z_{2,1}=z_{2,2}=-\frac{3q}{2p}$. 
   \item Case 3: $\Delta>0$, then the cubic equation \eqref{six_hump_wc_z_cubic} has three real roots, which can be expressed using the trigonometric solution:
\begin{align}
    z_{2,k} = 2\sqrt{-\frac{p}{3}}\cos\left[\arccos\left(\frac{3q}{2p}\sqrt{\frac{-3}{p}}\right)-\frac{2\pi k}{3}\right]\ \ \ \ \ \text{for} \ k=0,1,2. \nonumber
\end{align}
\end{itemize}

Since multiple solutions exist when $\Delta=0$ or $\Delta>0$, we choose the one that yields the smallest objective function value, i.e.,
\begin{align}
    z_2=\argmin_{z_{2,k}} \ H_1(z_1,z_{2,k}) \ \ \ \ \text{for} \ k=0,1,2. \nonumber
\end{align}

We summarize the update for $\h z$ as follows,
\begin{align}\label{6-hump-wc-z}
    \z^{t+1}=\begin{bmatrix}
        \frac{\rho(x_1^{t+1}+v_1^t)}{\rho+8} \\ z_2\\
    \end{bmatrix}.
\end{align}
We present the pseudo-code for solving the Six-Hump Camel Function using ADMM in Algorithm \ref{alg:six-hump-wc}.

\begin{algorithm}
\caption{ADMM for minimizing the Six-Hump Camel Function}
\label{alg:six-hump-wc}
\begin{algorithmic}[1]
\State Parameters: $\rho \in \mathbb{R}^+$ and tMAX, kMAX $\in \mathbb{Z}^+$; 
\State Initialize iterates $\h x^0, \h z^0, \h v^0$, and $t=0$; 
\While{$t<\text{tMAX}$ and $\|\nabla\zeta(\h x^t)\|_2>10^{-8}$} 
\State{Initialize iterates $\h x_0=\h x^t$ and $k=0$;}
\While{$k<\text{kMAX}$ and $\|\nabla(G_1(\h x_k)\|_2>10^{-2}$} 
\State{Update $\h x_{k+1}$ via \eqref{6-hump-wc-x};}
\State{$k=k+1$;}
\EndWhile
\State\Return $\h x^{t+1}=\h x_k$
\State{Update $\h z^{t+1}$ via \eqref{6-hump-wc-z};}
\State{$\h v^{t+1} = \h v^t + \h x^{t+1} -\h  z^{t+1}$;} \State{$t=t+1$;}
\EndWhile
\State \Return $\h x^*=\h x^t$
\end{algorithmic}
\end{algorithm}

\subsection{LADMM for Six-Hump Camel}\label{six-hump-ladmm}
As in Appendix~\ref{three-hump-ladmm} for the Three-Hump case, the ADMM $\h z$-subproblem for the Six-Hump Camel function \eqref{six-hump} admits a closed-form update (see Appendix~\ref{six-hump-admm}), whereas the $\h x$-subproblem requires an inner iterative routine. 
To reduce this cost, we again consider a problem-specific linearized ADMM variant that linearizes the $\h x$-subproblem while keeping the $\h z$-update unchanged. 
As with the Three-Hump case, we do not provide a convergence analysis here and leave it for future work.

Specifically, we linearize the $\h x$-subproblem by replacing $g(\h x)$ with its first-order approximation at $\h x^t$,
$$
g(\h x)\approx g(\h x^t)+\langle \nabla g(\h x^t),\h x-\h x^t\rangle,
$$
while keeping the $\h z$-update the same as in~\eqref{6-hump-wc-z}. 
Following the same derivation as in Appendix~\ref{three-hump-ladmm}, the resulting $\h x$-update is
\begin{equation}\label{eq:sixhump_x_generic}
\h x^{t+1}=\h z^{t}-\h v^t-\frac{1}{\rho}\nabla g(\h x^t)=
\begin{bmatrix}
z_1^{t}-v_1^t-\dfrac{2(x_1^t)^5+x_2^t-8.4(x_1^t)^3}{\rho}\\[8pt]
z_2^{t}-v_2^t-\dfrac{x_1^t-8x_2^t}{\rho}
\end{bmatrix}.
\end{equation}
Algorithm \ref{alg:six-hump-dc} outlines the LADMM procedure to solve the Six-Hump Camel Function.

\begin{algorithm}[H]
\caption{LADMM for minimizing the Six-Hump Camel Function}
\label{alg:six-hump-dc}
\begin{algorithmic}[1]
\State Parameters: $\rho \in \mathbb{R}^+$ and tMAX $\in \mathbb{Z}^+$; 
\State Initialize iterates $\h x^0, \h z^0, \h v^0$, and $t=0$; 
\While{$t<\text{tMAX}$ and $\|\nabla\zeta(\h x^t)\|_2>10^{-8}$} 

\State{Update $\h x^{t+1}$ via \eqref{eq:sixhump_x_generic};}
\State{Update $\h z^{t+1}$ via \eqref{6-hump-wc-z};}
\State{$\h v^{t+1} = \h v^t + \h x^{t+1} -\h  z^{t+1}$;} \State{$t=t+1$;}
\EndWhile
\State\Return{$\h x^*=\h x^t$}
\end{algorithmic}
\end{algorithm}

\section{Logistic Regression with LOG Regularization}
\subsection{ADMM: Logistic regression with LOG}\label{appendix log_admm}
By setting $f(\h x)=\frac{1}{m}\sum_{i=1}^m\log\left(1+\exp{\left(-b_i(\h a_i^\top\h x)\right)}\right)$ and $g(\x)=\lambda\sum_{j=1}^{n} P_{\rm{log}}(x_j;\epsilon)$ with $P_{\rm{log}}$ defined in \eqref{eq:LOG}, we express the LOG-regularized model into the general form of \eqref{prob:fplusg} in order to apply for ADMM. We start by verifying that $f(\h x)$ is convex and $g(\h x)$ is weakly convex. 

 Define a scalar function $h(s)=\log\left(1+\exp{(-s)}\right)$ and denote the sigmoid function by $l(s)=\frac{1}{1+\exp(-s)}$. We compute the first/second derivatives of $h(s)$ as follows, 
\begin{align*}
    h'(s) = -\frac{1}{1+\exp{(s)}}=-l(-s)\le 0  \text{ and }   h''(s) = l(s)(1-l(s))\ge 0.
\end{align*}
Since $h''(s)\ge 0$ for all $s\in\mathbb{R}$, the function $h(s)$ is convex.
Note that the function $f(\h x)$ is a finite average of functions of the form: $$f_i(\h x)=\log\left(1+\exp{\left(-b_i(\h a_i^\top\h x)\right)}\right)=h(b_i\h a_i^\top\h x).$$ Since $h(s)$ is convex and non-increasing, and $b_i\h a_i^\top \h x$ is an affine function in $\h x$, it follows from the composition rule of convex functions that each $f_i(\h x)=h(b_i\h a_i^\top\h x)$ is convex. Therefore, their average $f(\h x)=\frac{1}{m}\sum_{i=1}^m f_i(\h x)$ is also convex. 

Then we estimate the Liptschitz constant of $f(\h x)$. Since
$
0\le h''(s)=l(s)(1-l(s))\le \frac14, \forall s\in\mathbb{R}$ and
the Hessian of $f$ satisfies
$$
\nabla^2 f(\h x)=\frac1m\sum_{i=1}^m h''(b_i \h a_i^\top \h x)\,\h a_i \h a_i^\top
\preceq \frac1{4m}A^\top A,
$$
we have
\begin{equation}\label{Lip_f_bound}
    \lipf \le \frac{\|A\|_2^2}{4m}.
\end{equation}

Next, we verify that $g(\h x)$ is weakly convex. As the function $g(\x)=\lambda\sum_{j=1}^{n} P_{\rm{log}}(x_j;\epsilon)$ is separable with respect to each component $x_j$, the weak convexity of $g$ follows from that of the scaled penalty $P_{\rm{log}}$. In particular, \citet[Lemma 2.1]{ke2021iteratively} 
implies that for any 
$\sigma \ge \frac{2}{3\sqrt{3}\,\epsilon}$, the function
$$
\widehat{P}_{\log}(x;\epsilon) : = P_{\rm{log}}(x;\epsilon) + \frac{\sigma}{2} x ^2
$$
is convex for all $x \in \mathbb{R}$. Therefore, $P_{\log}(\cdot;\epsilon)$ is $\sigma$-weakly convex, 
and consequently $g$ is $\lambda\sigma$-weakly convex on $\mathbb{R}^n$. Together with the bound in \eqref{Lip_f_bound}, we obtain a sufficient condition for Theorem 3, namely
$$
\rho>\max\left\{\frac{2\lambda}{3\sqrt{3}\,\epsilon},
\frac{\sqrt{2}\,\|A\|_2^2}{4m}\right\}.
$$

%

For the special forms of $f(\cdot)$ and $g(\cdot)$, we elaborate on how to solve the $\h x$- and $\h z$-subproblems in the ADMM iteration \eqref{admm}. 
Specifically, the $\h x$-subproblem in \eqref{admm} is equivalent to
\begin{align}\label{x_subproblem_wc_log}
\h x^{t+1} \in \argmin_{\h x} g(\h x)+ \frac{\rho}{2} \|\h x-\h z^t+ \h v^t\|_2^2.
\end{align}

To obtain the closed-form solution of the $\h x$-subproblem under suitable conditions, we define the proximal operator for LOG by 
\begin{align}\label{proximal_operator_wc}
\textbf{prox}_{\rm{log}}(y;\mu) \in \arg\min_{x} G_y(x)\triangleq\mu P_{\rm{log}}(x) + \frac{1}{2} (x - y)^2,
\end{align}
where $\mu > 0$. By \citet[Theorem 14] {ke2024generalized}, 
if $\mu < \frac{3\sqrt{3}}{2}\,\epsilon$, then the proximal operator of LOG is given by
\begin{align}\label{LOG_prox}
\textbf{prox}_{\log}(y;\mu)=
\begin{cases}
0, & \text{if } |y|\le \dfrac{\mu}{\sqrt{\epsilon}},\\[8pt]
\text{sign}(y)\Big(\sqrt{t_1}-\big(\sqrt{t_2}+\sqrt{t_3}\big)\Big)+\dfrac{y}{2},
& \text{if } |y|> \dfrac{\mu}{\sqrt{\epsilon}},
\end{cases}
\end{align}
where 
\begin{align*}
t_1 &= \sqrt[3]{-\frac{q}{2}+\sqrt{\frac{q^2}{4}+\frac{p^3}{27}}} + \sqrt[3]{-\frac{q}{2}-\sqrt{\frac{q^2}{4}+\frac{p^3}{27}}} - \frac13\left(\frac{\epsilon}{2}-\frac{y^2}{4}\right),\\
t_2 &= \frac{-1+\sqrt{3}\,i}{2}\,\sqrt[3]{-\frac{q}{2}+\sqrt{\frac{q^2}{4}+\frac{p^3}{27}}}+\frac{-1-\sqrt{3}\,i}{2}\,\sqrt[3]{-\frac{q}{2}-\sqrt{\frac{q^2}{4}+\frac{p^3}{27}}}-\frac13\left(\frac{\epsilon}{2}-\frac{y^2}{4}\right),\\
t_3 &= \frac{-1-\sqrt{3}\,i}{2}\,\sqrt[3]{-\frac{q}{2}+\sqrt{\frac{q^2}{4}+\frac{p^3}{27}}}+\frac{-1+\sqrt{3}\,i}{2}\,\sqrt[3]{-\frac{q}{2}-\sqrt{\frac{q^2}{4}+\frac{p^3}{27}}}-\frac13\left(\frac{\epsilon}{2}-\frac{y^2}{4}\right),\\
p &= \left(\frac{\mu^2}{4}+\frac{\epsilon^2}{16}-\frac{y^2\epsilon}{8}\right)
-\frac13\left(\frac{\epsilon}{2}-\frac{y^2}{4}\right)^2,\\
q &= \frac{2}{27}\left(\frac{\epsilon}{2}-\frac{y^2}{4}\right)^3
-\frac13\left(\frac{\epsilon}{2}-\frac{y^2}{4}\right)
\left(\frac{\mu^2}{4}+\frac{\epsilon^2}{16}-\frac{y^2\epsilon}{8}\right)
-\frac{y^2\epsilon^2}{64}.
\end{align*}

Therefore, the $\h x$-subproblem admits a closed-form solution:
\begin{align}\label{log-mcp-wc-x}
\h x^*=\textbf{prox}_{\rm{log}}(\h z^t - \h v^t,\frac{\lambda}{\rho}),
\end{align}
where $\textbf{prox}_{\log}$ is applied elementwise, $\mu = \frac{\lambda}{\rho}$ and $\h y = \h z^t - \h v^t$.

The $\h z$-subproblem in \eqref{admm} is equivalent to
\begin{align}\label{logistic_z_wc}
    \h z^{t+1} \in \argmin_{\h z} f(\h z)+ \frac{\rho}{2} \|\h x^{t+1}-\h z+ \h v^t\|_2^2 \triangleq H(\z). 
\end{align}

Recall that $\h a_i\in \mathbb{R}^n$ is a feature vector and $b_i\in \{-1,1\}$ is the corresponding model, where $i=1,2,\cdots,m$. Let $\h c_i^\top=-b_i\h a_i^\top$,
then we can rewrite \eqref{logistic_z_wc} as
\begin{align}
    \h z^{t+1} \in \argmin_{\h z} \frac{1}{m}\sum_{i=1}^m\log\left(1+\exp{(\h c_i^\top\h z)}\right)+\frac{\rho}{2} \|\h x^{t+1}-\h z+\h v^t\|_2^2.
\end{align}

Thus, Newton's method can be employed to solve \eqref{logistic_z_wc}. The gradient of the function $ H(\z) $ is given by
\begin{align}
    \nabla H(\z) =
    \frac{1}{m}\sum_{i=1}^m l(\h c_i^\top\h z)\h c_i+\rho (\h z-\h x^{t+1}-\h v^t),
\end{align}
while the corresponding Hessian matrix is computed as
\begin{align}
    \nabla^2 H(\z) =
    \frac{1}{m}\sum_{i=1}^m l(\h c_i^\top\h z)\left(1-l(\h c_i^\top\h z)\right)\h c_i\h c_i^\top+\rho I_d,
\end{align}
where $l(s)=\frac{1}{1+\exp(-s)}$ denotes the sigmoid function, and $I_d$ represents the identity matrix.

Accordingly, the update rule for the variable $\h z$ takes the form
\begin{align}\label{log-wc-z}
    \z_{k+1} = \z_k - \tau \left[\nabla^2 H(\z_k)\right]^{-1}\nabla H(\h z_k),
\end{align}
where $ \tau > 0 $ is the step size and $ k $ indexes the inner iteration. The iterations of Newton's method described in \eqref{log-wc-z} are repeated until convergence is achieved. The final iterate $\h z_{k+1}$ is then set as $\h z^{t+1}$. The complete ADMM algorithm used to solve the logistic regression with LOG is summarized in Algorithm \ref{alg:log-mcp-wc}.

\begin{algorithm}[H]
\caption{ADMM for minimizing the logistic regression with LOG}
\label{alg:log-mcp-wc}
\begin{algorithmic}[1]
\State Input: data input the feature matrix $A$ and the corresponding label $\h b$; 
\State Parameters: $\rho, \lambda, \epsilon,\tau \in \mathbb{R}^+$ and tMAX, kMAX$\in \mathbb{Z}^+$; 
\State Initialize iterates $\h x^0, \h z^0, \h v^0$, and $t,k=0$; 
\While{$t<\text{tMAX}$ and $\frac{\|\h x^{t+1}-\h x^t\|_2}{\|\h x^t\|_2}>10^{-5}$} 
\State{Update $\h x^{t+1}$ via \eqref{log-mcp-wc-x};}
\While{$k<\text{kMAX}$ and $\frac{\|\h z_{k+1}-\h z_k\|_2}{\|\h z_k\|_2}>10^{-2}$}
\State{Update $\h z_{k+1}$ via \eqref{log-wc-z};}
\State{$k=k+1$;}
\EndWhile
\State\Return{$\h z^{t+1}=\h z_k$}
\State{
$\h v^{t+1} = \h v^t + \h x^{t+1} -\h  z^{t+1}$;  }  
\State{$t=t+1$;}
\EndWhile
\State \Return{$\h x^*=\h x^t$}
\end{algorithmic}
\end{algorithm}

\subsection{LADMM: Logistic regression with LOG}\label{appendix log_ladmm}
We now apply the LADMM scheme from Section \ref{sec:LADMM} to the logistic regression with LOG regularization \eqref{prob:logistic-LOG}. Using the weak convexity modulus of $g$ and the Lipschitz constant of $\nabla f$ derived in Appendix \ref{appendix log_admm}, a sufficient condition in Theorem \ref{thm:linearADMM_conv} becomes
$$
\rho>\max\left\{
\frac{2\lambda}{3\sqrt{3}\,\epsilon},\;
1+\frac{\|A\|_2^2}{4m},\;
\frac{\|A\|_2^2}{4m}
\left(1+\frac{\|A\|_2^2}{2m}\right)
\right\}.
$$
The $\h x$-update is identical to that in Appendix \ref{appendix log_admm} and is given by \eqref{log-mcp-wc-x}. 
For the $\h z$-update, applying \eqref{ladmm_z_sol} yields the explicit step
\begin{align}\label{ladmm_z_log}
    \h z^{t+1} 
    = \h x^{t+1}+\h v^t-\frac{1}{m\rho}\sum_{i=1}^m l(\h c_i^\top\h z^t)\h c_i,
\end{align}
where $l(s)=\frac{1}{1+\exp(-s)}$ denotes the sigmoid function, and $\h c_i^\top=-b_i\h a_i^\top$.
The  LADMM procedure for solving logistic regression with LOG is summarized in Algorithm \ref{alg:log-mcp-dc}.

\begin{algorithm}[H]
\caption{LADMM for minimizing the logistic regression with LOG}
\label{alg:log-mcp-dc}
\begin{algorithmic}[1]
\State Input: data input the feature matrix $A$ and the corresponding label $\h b$; 
\State Parameters: $\rho, \lambda, \epsilon \in \mathbb{R}^+$ and tMAX$\in \mathbb{Z}^+$; 
\State Initialize iterates $\h x^0, \h z^0, \h v^0$, and $t=0$; 
\While{$t<\text{tMAX}$ and $\frac{\|\h x^{t+1}-\h x^t\|_2}{\|\h x^t\|_2}>10^{-5}$} 
\State{Update $\h x^{t+1}$ via \eqref{log-mcp-wc-x};}
\State{Update $\h z^{t+1}$ via \eqref{ladmm_z_log};}
\State{
$\h v^{t+1} = \h v^t + \h x^{t+1} -\h  z^{t+1}$;  }  
\State{$t=t+1$;}
\EndWhile
\State \Return{$\h x^*=\h x^t$}
\end{algorithmic}
\end{algorithm}




\end{appendices}

\end{document}